\newcommand{\cal}[1]{\mathcal{#1}}
\numberwithin{theorem}{section}
\newcommand{\Ref}{\oper{Ref}}
\renewcommand{\cris}{\crys}
\title{Smoothness of definite unitary eigenvarieties at critical points}
\author{John Bergdall}
\date {\today}
\address{John Bergdall\\Department of Mathematics and Statistics \\ Boston University \\ 111 Cummington Mall \\ Boston, MA 02215\\USA}
\email{bergdall@math.bu.edu}
\urladdr{http://math.bu.edu/people/bergdall}
\subjclass[2000]{11F33 (11F80, 11F55)}
\begin{document}
\begin{abstract}
We compute an upper bound for the dimension of the tangent spaces at classical points of certain eigenvarieties associated with definite unitary groups, especially including the so-called critically refined cases. Our bound is given in terms of ``critical types'' and when our bound is minimized it matches the dimension of the eigenvariety. In those cases, which we explicitly determine, the eigenvariety is necessarily smooth and our proof also shows that the completed local ring on the eigenvariety is naturally a certain universal Galois deformation ring.
\end{abstract}
\maketitle 
\setcounter{tocdepth}{1}
%\tableofcontents

\section{Introduction}
Let $p$ be a prime number. Eigenvarieties, referring to $p$-adic families of automorphic forms, first appeared thirty years ago in the work of Hida on $p$-ordinary elliptic cuspidal eigenforms. Coleman and Mazur, and Buzzard, removed the ordinary condition in the final decade of the 20th century, constructing $p$-adic families passing through all finite slope cuspidal eigenforms. A number of authors have since given  constructions of eigenvarieties in a wide range of situations.  Despite the growing interest in the subject, basic geometric properties of eigenvarieties remain occluded.\footnote{For example: it is not known if the tame level 1, $2$-adic eigencurve has finitely many connected components.}

The goal of this article is to give upper bounds for the dimensions of the tangent spaces at classical points on eigenvarieties associated with definite unitary groups. These bounds are given in terms of {\em critical types} of triangulations of local Galois representations at the $p$-adic places. The more critical a point is, the larger the upper bound.  

We also give an exact condition for when our bound is minimized. In that case, our bound equals the dimension of the corresponding eigenvariety and we get a smoothness statement as well. For the rest of this introduction we set notation and state the main theorem.

\subsection{Critical types of $p$-refined automorphic representations}
Let $F/F^+$ be a CM extension of number fields such that each $p$-adic place of $F^+$ splits in $F$. We will use $\bf G$ to denote a rank $n$ unitary group associated to this extension. We assume $\bf G\times_{F^+} F \simeq {\GL_n}_{/F}$ and that $\bf G$ is compact at infinity. In the body of the text, we will also make further technical assumptions that we omit now (see Section \ref{section:application}). If $v$ is a $p$-adic place of $F^+$ then the choice of a place $\twid v \dvd v$ in $F$ defines an isomorphism $\bf G(F_{v}^+) \simeq \bf \GL_n(F_{\twid v}) = \GL_n(F_v^+)$. Thus, the local components of automorphic representations for $\bf G$ at $p$-adic places are irreducible smooth representations of $\GL_n(F_v^+)$.

Associated with an automorphic representation $\pi$ for $\bf G$ is a $p$-adic Galois representation
\begin{equation*}
\rho_{\pi} : G_{F} \goto \GL_n(\bar \Q_p).
\end{equation*}
The representation $\rho_{\pi}$ is geometric in the sense of Fontaine and Mazur: $\rho_\pi$ is unramified except at finitely many places and it is de Rham at the $p$-adic places.

We now fix an automorphic representation $\pi$ of $\bf G$ lying in the unramified principal series at the $p$-adic places $v \dvd p$ in $F^+$. For such $v$, we also fix the choice of $\twid v \dvd v$ in $F$ and denote by $\rho_{\pi,\twid v}$ its restriction to a decomposition group at $\twid v$. Since the local component $\pi_v$ at a place $v \dvd p$ is an unramified principal series, the representation $\rho_{\pi,\twid v}$ is crystalline. Note that $F_{\twid v} = F_v^+$ and, to emphasize that the choice of $\twid v$ does not matter substantially, we write $F_v^+$ for this $p$-adic field. Finally, we choose a finite extension $L/\Q_p$, inside a fixed algebraic closure $\bar \Q_p$, such that the image of $\rho_{\pi}$ is contained in $\GL_n(L)$, and we assume that $L$ contains the image of any embedding $\tau: F_v^+ \inject \bar \Q_p$ for each $v$.

Along with $\pi$ and the distinguished places $\twid v$, we also fix a collection of triangulations $(P_{\twid v,\bullet})_{\twid v}$. Specifically, we consider the $(\varphi,\Gamma_{F_{v}^+})$-module $D_{\pi,\twid v} := D_{\rig}^{\dagger}(\rho_{\pi,\twid v})$ over the Robba ring $\cal R_{F_v^+,L}$ (see Section \ref{section:reminder}), and the triangulation $P_{\twid v,\bullet}$ is a full filtration
\begin{equation*}
P_{\twid v,\bullet}: \;\; (0) = P_{\twid v,0} \sci P_{\twid v,1} \sci \dotsb \sci P_{\twid v,n-1} \sci P_{\twid v, n} = D_{\pi,\twid v}
\end{equation*} 
of saturated $(\varphi,\Gamma_{F_v^+})$-submodules. We refer to $\pi$ together with the choice of triangulations as a $p$-refined automorphic representation (the terminology goes back to \cite{Mazur-padicvariation}).

The graded pieces of the triangulation $P_{\twid v,\bullet}$ have rank one and so $P_{\twid v,\bullet}$ defines an ordered tuple $(\delta_{\twid v,1},\dotsc,\delta_{\twid v,n})$ of continuous characters $\delta_{\twid v,i} : (F_v^+)^\times \goto L^\times$ as in \cite[Section 6.2]{KedlayaPottharstXiao-Finiteness} (see also Section \ref{subsec:first-subsec}). For each embedding $\tau:F_v^+\inject L$ there exists an integer $s_{i,\twid v,\tau}$ (the $\tau$-Hodge--Tate weight of $\delta_{\twid v,i}$) such that $\delta_{\twid v,i}(z) = \prod_{\tau} \tau(z)^{-s_{i,\twid v,\tau}}$ for all $z$ in $\cal O_{F_v^+}^\times$. (Our normalization gives the identity character Hodge--Tate weight $-1$ at each embedding $\tau$.)\label{pageref:weight}

On the other hand, for each embedding $\tau: F_v^+ \inject L$, we also have the (necessarily distinct) Hodge--Tate weights $h_{1,\twid v,\tau} < h_{2,\twid v,\tau} < \dotsb < h_{n,\twid v,\tau}$ of $D_{\pi,\twid v}$. For  fixed $\twid v$ and $\tau$, the two sets $\set{s_{i,\twid v,\tau} \st i = 1,\dotsc, n}$ and $\set{h_{i,\twid v, \tau} \st i = 1,\dotsc, n}$ are equal, so we denote by $\sigma_{\twid v,\tau}$ the unique permutation defined by $s_{i,\twid v,\tau} = h_{\sigma_{\twid v,\tau}(i), \twid v, \tau}$.\footnote{It may make sense to replace $\sigma_{\twid v,\tau}$ by its inverse (for formulae in representation theory to work out cleaner). The results below (Theorems \ref{theorem:intro-theorem} and \ref{theorem:intro-local-theorem}) do not depend on this choice.}
\begin{definition}
The critical type of the triangulation $P_{\twid v,\bullet}$ of $D_{\pi,\twid v}$ is the collection of permutations $(\sigma_{\twid v,\tau})_{\tau}$. The triangulation $P_{\twid v,\bullet}$ is called non-critical if $\sigma_{\twid v,\tau} = \id$ for each $\tau$.
\end{definition}
The non-critical case is the most common. For instance, a $p$-refined automorphic representation of non-critical slope is non-critical (see \cite[Remark 2.4.6(ii)]{BellaicheChenevier-Book}) and having non-critical slope is generic on an eigenvariety. Nevertheless, interesting arithmetic phenomena occur in critical situations (see \cite[Theorem 2]{Bellaiche-CriticalpadicLfunctions} for example) and it seems less difficult for a triangulation to be critical as $n \goto \infty$. For contrast, if $n=2$ and $F_v^+ = \Q_p$ then a critical triangulation at $\twid v$ exists if and only if $\rho_{\pi,\twid v}$ is abelian.

\subsection{Main result}\label{subsec:main-result}
An eigenvariety $p$-adically interpolates $p$-refined automorphic representations $(\pi, (P_{\twid v,\bullet}))$. We refer to \cite{Chenevier-pAdicAutomorphicForm,Emerton-InterpolationEigenvariety} and Section \ref{section:application} for details. Here, we fix a {\em minimal} eigenvariety $X$ containing the pair $x:=(\pi,(P_{\twid v,\bullet}))$. Thus, $X$ is a rigid analytic space over $\Q_p$, equidimensional of dimension $(F^+:\Q)\cdot n$. It implicitly depends on the choice of a tame level; second, the minimal condition essentially means that the point $x$ is not lying at the intersection of two eigenvarieties obtained from smaller tame levels. Our main theorem is a bound on the dimension of the Zariski tangent space $T_{X,x}$ of $X$ at $x$ in terms of critical types.  To state it, we need two notations.

The representation $\rho_{\pi}$ is conjugate self-dual, up to a twist, and if $\rho_{\pi}$ is absolutely irreducible then the natural action of $G_{F}$ on the adjoint representation $\ad \rho_{\pi}$ extends to an action of the absolute Galois group $G_{F^+}$ (see Section \ref{subsec:ub}). We  denote by $H^1_f(G_{F^+}, \ad \rho_{\pi})$ the corresponding Bloch--Kato Selmer group \cite{BlochKato-TamagawaNumbersOfMotives}.

If $\sigma$ is a permutation of $\set{1,\dotsc,n}$, we let $\ell(\sigma) = \set{(i,j) \st i < j \text{ and $\sigma(i) > \sigma(j)$}}$. This is also the length of a minimal expression of $\sigma$ as a product of simple transpositions. (A simple transposition is a transposition interchanging two consecutive integers $i$ and $i+1$.) We also write $c(\sigma)$ for the number of orbits of the group generated by $\sigma$ acting on $\set{1,\dotsc,n}$. For example, $\ell(\id) = 0$ and $c(\id) = n$.

In the next theorem, we refer to Definition \ref{defi:regular-generic} for the notion of a regular generic triangulation. For an idea, ``regular'' is a simplicity condition on crystalline eigenvalues.

\begin{theorem}[Theorem \ref{theorem:main-theorem-text}]\label{theorem:intro-theorem}
Suppose that $\pi$ is an automorphic representation of $\G$ which is unramified at each $p$-adic place and $(P_{\twid v,\bullet})$ is a collection of regular generic  triangulations for $\rho_{\pi}$ at the $p$-adic places. Assume that $\rho_{\pi}$ is irreducible and $X$ is the minimal eigenvariety containing the point $x = (\pi,(P_{\twid v,\bullet}))$. Then, 
\begin{equation*}
\dim T_{X,x} \leq \dim H^1_f(G_{F^+}, \ad \rho_{\pi}) + \sum_{\twid v,\tau} \ell(\sigma_{\twid v,\tau}) + c(\sigma_{\twid v,\tau}),
\end{equation*}
where $(\sigma_{\twid v,\tau})_{\twid v,\tau}$ are the critical types of the triangulations at $x$.
\end{theorem}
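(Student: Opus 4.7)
The plan is to convert the bound on $\dim T_{X,x}$ into a cohomological bound on a suitable trianguline Selmer group, and then to control that Selmer group by comparing the trianguline local conditions with the Bloch--Kato crystalline conditions one place at a time. First, I would use the family of Galois representations carried by $X$, together with triangulations in families following Kedlaya--Pottharst--Xiao and Bellaiche--Chenevier, to produce an injection
\begin{equation*}
T_{X,x} \inject H^1_{\mathrm{tri}}(G_{F^+}, \ad \rho_{\pi}),
\end{equation*}
where the target is a Selmer group with local conditions: unramified outside $p$ and the tame level (conjugate self-duality lets one descend from $G_F$ to $G_{F^+}$), no further condition at finite tame primes beyond what the \emph{minimality} of $X$ forces, and at each $\twid v \dvd p$ a \emph{trianguline} condition $L_{\twid v} \sci H^1(G_{F_v^+}, \ad \rho_{\pi,\twid v})$ consisting of those first-order deformations whose $(\varphi,\Gamma_{F_v^+})$-module admits a triangulation deforming $P_{\twid v,\bullet}$ with each graded character $\delta_{\twid v,i}$ deforming to a character of the same $\tau$-Hodge--Tate weight $s_{i,\twid v,\tau}$. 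The regular and generic hypotheses ensure that $L_{\twid v}$ is a well-behaved subspace and that $T_{X,x}$ really lands inside it.

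Second, I would apply the standard Selmer comparison to get
\begin{equation*}
\dim H^1_{\mathrm{tri}}(G_{F^+}, \ad \rho_{\pi}) \leq \dim H^1_f(G_{F^+}, \ad \rho_{\pi}) + \sum_{\twid v \dvd p} \dim\!\bigl( L_{\twid v} / (L_{\twid v} \cap H^1_f(G_{F_v^+}, \ad \rho_{\pi,\twid v})) \bigr),
\end{equation*}
which is a formal consequence of the exact sequences defining both Selmer groups, together with the agreement of the two local conditions away from $p$ (they are unramified there).

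Third, I would compute each local defect at a $p$-adic place. One translates the problem into a computation with the rank one graded pieces $\cal R_{F_v^+,L}(\delta_{\twid v,i}/\delta_{\twid v,j})$ of $\ad D_{\pi,\twid v}$: a class in $L_{\twid v}$ is assembled from extensions among these, while the crystalline subspace $L_{\twid v} \cap H^1_f$ is cut out by the requirement that the resulting extensions be de Rham with the prescribed Hodge--Tate weights. Which Ext classes are automatically crystalline and which produce a genuine defect is governed precisely by the permutations $\sigma_{\twid v,\tau}$: inversions contribute to $\ell(\sigma_{\twid v,\tau})$ and orbits contribute to $c(\sigma_{\twid v,\tau})$, the latter accounting for the $n$-dimensional weight-varying piece per embedding that is present even in the non-critical case ($c(\id) = n$) and whose sum over $(\twid v,\tau)$ gives exactly the dimension $n[F^+:\Q]$ of $X$.

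The main obstacle is this third step: the local tangent-space calculation for the trianguline deformation functor at a critical crystalline point. In the non-critical case, trianguline deformations with fixed refinement match crystalline deformations up to a transparent correction, but in the critical case one must carefully identify which non-crystalline classes in the $H^1$ of the $\cal R_{F_v^+,L}(\delta_{\twid v,i}/\delta_{\twid v,j})$ genuinely lift to tangent vectors on the trianguline deformation functor rather than to spurious triangulations. This is a delicate computation in the category of $(\varphi,\Gamma_{F_v^+})$-modules -- relying crucially on the regular and generic assumptions to avoid degeneracies -- whose combinatorial output is exactly the count $\ell(\sigma_{\twid v,\tau}) + c(\sigma_{\twid v,\tau})$ demanded by the theorem.
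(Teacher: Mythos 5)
Your first step already contains a genuine gap. You assert that the tangent space $T_{X,x}$ injects into a Selmer group $H^1_{\mathrm{tri}}$ whose local condition at each $\twid v\dvd p$ is \emph{trianguline}: first-order deformations whose $(\varphi,\Gamma_{F_v^+})$-module admits a triangulation deforming $P_{\twid v,\bullet}$. This is precisely what fails at critical points, and it is the central subtlety the paper is organized around. What the $p$-adic interpolation theorems of Kisin and Liu actually give you, on the family of $(\varphi,\Gamma)$-modules over the eigenvariety near a critical point, is a rank-one crystalline period on each exterior power $\wedge^j D_{\rig}^{\dagger}(\rho_{Y,\twid v})$ (after the appropriate twist) --- not a global triangulation of the family. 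Liu's triangulations in families only exist generically; at a critical classical point the putative rank-one sub fails to be saturated. So the correct local condition is the ``weakly-refined'' one, defined via crystalline periods on exterior powers (Definition~\ref{def:weakly-refined} in the paper), and this condition is strictly weaker than trianguline exactly when the critical type is nontrivial. The paper explicitly states that the key point is measuring how far weakly-refined deformations are from trianguline; assuming they coincide begs the question.

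A secondary problem is internal to your write-up: you define $L_{\twid v}$ by requiring each graded character to deform to one with the \emph{same} $\tau$-Hodge--Tate weight $s_{i,\twid v,\tau}$, yet in your third step you explain that the term $\sum c(\sigma_{\twid v,\tau})$ is ``accounting for the $n$-dimensional weight-varying piece per embedding.'' These cannot both be right. The eigenvariety genuinely varies in weight (the map to weight space is finite onto its image of full dimension), so a constant-weight local condition would not receive the image of $T_{X,x}$; and if you relax to allow weight variation, the condition you wrote down no longer makes sense. The paper handles this by splitting the weakly-refined tangent space into a constant-weight part bounded by $\sum_\tau \ell(\sigma_\tau)$ (Theorem~\ref{theorem:HT-ref-bound} and Corollary~\ref{corollary:upper-bound-HT}) and a separate weight-variation map $\mathrm{d}\eta$ whose image has dimension at most $\sum_\tau c(\sigma_\tau)$ (Lemma~\ref{lemma:ramification-weights}); the constraint on allowable weight variation is itself governed by the permutation $\sigma_\tau$ and is nontrivial to establish. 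Your second step (the Selmer-group comparison) and your global exact sequence are fine in outline and match what the paper does in Lemma~\ref{lemma:global-bounded}, but you need to replace the trianguline local condition by the weakly-refined one and then actually carry out the local computation rather than assert its combinatorial output.
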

 We also have the following direct corollary.
\begin{corollary}[Corollary \ref{corollary:final-corollary}]\label{corollary:intro-corollary}
With the notation and assumptions of Theorem \ref{theorem:intro-theorem}, also assume that $H^1_f(G_{F^+}, \ad \rho_{\pi}) = (0)$ and each $\sigma_{\twid v,\tau}$ is a product of distinct simple transpositions. Then, $X$ is smooth at the point $x$.
\end{corollary}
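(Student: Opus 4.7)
The plan is to plug the two extra hypotheses into the bound of Theorem \ref{theorem:intro-theorem} and show that its right-hand side collapses to $n\cdot[F^+:\Q]$, the local dimension of $X$ at $x$. Since $X$ is equidimensional of this dimension and the inequality $\dim T_{X,x}\geq\dim_x X$ always holds, such a collapse forces the completed local ring of $X$ at $x$ to be regular, hence $X$ smooth at $x$.

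The substantive step is the combinatorial identity $\ell(\sigma)+c(\sigma)=n$ for any $\sigma\in S_n$ that is a product of $k$ distinct simple transpositions. I would prove it by encoding each simple transposition $s_i=(i,i+1)$ occurring in the given factorization of $\sigma$ as the edge $\{i,i+1\}$ of the path graph on $\{1,\dotsc,n\}$. By distinctness the chosen edges form a disjoint union of subpaths on consecutive integers. On each such subpath with $m$ edges, the corresponding simple transpositions generate a parabolic subgroup isomorphic to $S_{m+1}$, and their product (each appearing exactly once, in whatever order they occur) is a Coxeter element of that parabolic, hence a single $(m+1)$-cycle. Tallying cycles and fixed points over all subpaths gives $c(\sigma)=n-k$. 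On the other hand $\ell(\sigma)\leq k$ from the given length-$k$ expression, while the standard fact that any product of $m$ transpositions in $S_n$ has at least $n-m$ cycles, applied to a reduced expression for $\sigma$, gives $\ell(\sigma)\geq n-c(\sigma)=k$. Therefore $\ell(\sigma)=k$ and $\ell(\sigma)+c(\sigma)=n$.

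Summing this identity over all pairs $(\tilde v,\tau)$, whose total count equals $\sum_{v\mid p}[F_v^+:\Q_p]=[F^+:\Q]$, yields
\[
\sum_{\tilde v,\tau}\bigl(\ell(\sigma_{\tilde v,\tau})+c(\sigma_{\tilde v,\tau})\bigr) \;=\; n\cdot[F^+:\Q].
\]
Inserting this together with the vanishing of $H^1_f(G_{F^+},\ad\rho_\pi)$ into the bound of Theorem \ref{theorem:intro-theorem} gives $\dim T_{X,x}\leq n\cdot[F^+:\Q]=\dim_x X$, whence smoothness by the first paragraph. The main obstacle, modest as it is, is verifying the combinatorial identity; everything else is a direct substitution into the main theorem.
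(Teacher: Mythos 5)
Your proof is correct, and the overall strategy --- plug the combinatorial identity and the Selmer vanishing into the bound of Theorem~\ref{theorem:intro-theorem}, note that the result equals $\dim_x X$, and invoke $\dim T_{X,x}\geq \dim_x X$ to conclude regularity --- matches the paper's. The genuine difference is in how you establish $\ell(\sigma)+c(\sigma)=n$ for $\sigma$ a product of distinct simple transpositions. The paper (Proposition~\ref{prop:permutations} via Lemma~\ref{lemma:cycle-lemma}) decomposes $\sigma$ into disjoint cycles and claims $\ell(\sigma)=\sum\ell(\sigma_i)$ together with $c(\sigma)=n+r-\sum\ord(\sigma_i)$, then applies the cycle lemma termwise. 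Your argument instead reads the distinct simple transpositions as edges of the path graph, identifies the product on each connected subpath as a Coxeter element of a standard parabolic $S_{m+1}$ (hence an $(m+1)$-cycle), counts $c(\sigma)=n-k$, and then squeezes $\ell(\sigma)=k$ between the given length-$k$ word and the bound $\ell(\sigma)\geq n-c(\sigma)$ coming from the standard ``a product of $m$ transpositions has at least $n-m$ cycles'' fact. This is cleaner and, notably, sidesteps a subtlety in the paper's argument: the identity $\ell(\sigma)=\sum\ell(\sigma_i)$ over a disjoint cycle decomposition is false in general (e.g.\ $\sigma=(1\,3)(2\,4)\in S_4$ has $\ell(\sigma)=4$ but $\ell((1\,3))+\ell((2\,4))=3+3=6$); it does hold when each $\sigma_i$ acts on an interval of consecutive integers, which is exactly the case when $\sigma$ is a product of distinct simple transpositions, so the direction the corollary needs survives --- but your derivation never needs that repair. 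One thing the paper's version buys and yours does not explicitly recover: by comparing the power series presentation of $R_{\rho_x}^{\Ref,\min}$ in $(F^+:\Q)\cdot n$ variables with the equidimensionality of $\hat{\cal O}_{X,x}^{\rig}$ and applying Krull's Hauptidealsatz, the paper concludes that the surjection $R_{\rho_x}^{\Ref,\min}\twoheadrightarrow\hat{\cal O}_{X,x}^{\rig}$ is an isomorphism, i.e.\ an $R=\mathbf{T}$ statement. That is extra content beyond the smoothness you were asked to prove, so your proof is complete for the stated corollary.
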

The Selmer group is conjectured to vanish;\footnote{In fact, the larger space $H^1_f(G_F,\ad \rho_{\pi})$ is conjectured to vanish.} it has been proven in many cases, under hypotheses inherent to the methods of Taylors--Wiles and Kisin (see \cite{Allen-SelmerGroups} and \cite[Section 4]{BreuilHellmanSchraen-TWEigen}). Denote by $\bar \rho_{\pi}^{\oper{ss}}$ the semi-simplification of any mod $p$ reduction of $\rho_{\pi}$. As a concrete version of Corollary \ref{corollary:intro-corollary}, the main theorem of \cite{Allen-SelmerGroups} implies:
\begin{corollary}
With the notation and assumptions of Theorem 1.2, assume that each $\sigma_{\twid v,\tau}$ is a product of distinct simple transpositions, $\zeta_p \nin F$, and $\bar \rho_{\pi}^{\oper{ss}}(G_{F(\zeta_p)})$ is adequate.\footnote{The definition of adequate is taken from \cite[Definition 3.1.1]{Allen-SelmerGroups} , which generalizes \cite[Definition 2.3]{Thorne-Automorphy}.} Then $X$ is smooth at $x$.
\end{corollary}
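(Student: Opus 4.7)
The plan is to deduce this corollary directly from Corollary \ref{corollary:intro-corollary} by verifying its Selmer-vanishing hypothesis using the adequacy assumption. Corollary \ref{corollary:intro-corollary} already concludes smoothness of $X$ at $x$ under the combined hypotheses that each $\sigma_{\twid v,\tau}$ is a product of distinct simple transpositions and that $H^1_f(G_{F^+},\ad \rho_{\pi}) = (0)$. The first condition is assumed here, so only the vanishing of the Bloch--Kato Selmer group remains to be established, under the additional assumptions that $\zeta_p \nin F$ and $\bar \rho_{\pi}^{\oper{ss}}(G_{F(\zeta_p)})$ is adequate.

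First I would record that the Galois representation $\rho_{\pi}$ appearing in Theorem \ref{theorem:intro-theorem} is absolutely irreducible, conjugate self-dual up to a twist (from the setup on $F/F^+$), geometric in the sense of Fontaine--Mazur, and crystalline at each $p$-adic place (since $\pi$ lies in the unramified principal series at each such place). Combined with the adequacy of $\bar \rho_{\pi}^{\oper{ss}}(G_{F(\zeta_p)})$ and $\zeta_p \nin F$, this places $\rho_{\pi}$ within the class of representations governed by the main theorem of \cite{Allen-SelmerGroups}, which then yields $H^1_f(G_{F^+},\ad \rho_{\pi}) = (0)$. One must check that the Selmer structure of \emph{loc.\ cit.}\ agrees with the one fixed in Section \ref{subsec:ub}---specifically that the extension of the $G_F$-action on $\ad \rho_{\pi}$ to $G_{F^+}$ via conjugate self-duality, together with the local Bloch--Kato conditions, matches Allen's conventions. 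Granted this, Corollary \ref{corollary:intro-corollary} applies verbatim and forces $X$ to be smooth at $x$.

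The main obstacle is not mathematical depth but bookkeeping: one must confirm that the running assumptions of Sections \ref{section:application} and \ref{subsec:ub} (on $F/F^+$, on the tame level, on irreducibility, and on the Selmer structure) are compatible with the automorphic and Galois-theoretic hypotheses of \cite{Allen-SelmerGroups}. No new input beyond combining the two cited results is required.
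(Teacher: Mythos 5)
Your proposal matches the paper's argument exactly: invoke the main theorem of \cite{Allen-SelmerGroups}, using the adequacy of $\bar \rho_{\pi}^{\oper{ss}}(G_{F(\zeta_p)})$ and $\zeta_p \nin F$, to conclude $H^1_f(G_{F^+}, \ad \rho_{\pi}) = (0)$, and then apply Corollary \ref{corollary:intro-corollary}. The paper presents this as an immediate consequence without spelling out the bookkeeping you flag, but the route is the same.
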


The main corollary is deduced from the theorem as follows. A short computation shows that the contribution of the critical types in Theorem \ref{theorem:intro-theorem} is minimized, and equal to $(F^+:\Q)\cdot n$, exactly when each critical type is a product of distinct simple transpositions. Since $X$ is equidimensional of dimension $(F^+:\Q)\cdot n$, this means that $X$ is regular, and thus smooth, at $x$ in the situation of Corollary \ref{corollary:intro-corollary}. The proof also gives an ``$R = \bf T$'' theorem which we will partially explain in Section \ref{subsec:sketch} below (see Corollary \ref{corollary:final-corollary} for a precise statement). 

Examples constructed in \cite{Bellaiche-Nonsmooth} by Bella\"iche  show that the irreducibility of $\rho_{\pi}$ is important in Theorem \ref{theorem:intro-theorem} and Corollary \ref{corollary:intro-corollary}. Regarding optimality, Breuil, Hellmann and Schraen have shown that Corollary \ref{corollary:intro-corollary} is optimal in that $X$ is singular once one of its critical types is {\em not} a product of distinct simple transpositions. See \cite[Theorem 1.2]{BreuilHellmannSchraen-LocalModel} and compare with the earlier result \cite[Corollary 5.18]{BHS-Classicality}. We expand on these notes following Corollary \ref{corollary:final-corollary} in the text.

\subsection{Sketch of proof}\label{subsec:sketch}
The proof of Theorem \ref{theorem:intro-theorem} follows a well-known strategy: we compare a ring of Hecke operators to a universal deformation ring for a Galois representation. Our ring of Hecke operators is the completion $\hat{\cal O}_{X,x}^{\rig}$ of the rigid analytic local ring of the eigenvariety $X$ at the point $x$, and the deformation ring, denoted $R_{\rho_{\pi}}^{\Ref,\min}$, is a deformation ring for $\rho_{\pi}$. 

The deformations parameterized by $R_{\rho_{\pi}}^{\Ref,\min}$ are {\em weakly-refined} at $p$-adic places and {\em minimally ramified} (or, unramified in the sense of Bloch and Kato) at the places away from $p$. The weakly-refined condition depends on a triangulation (which is suppressed in the notation). When the triangulation is non-critical, the weakly-refined deformations are the same as the {\em trianguline} deformations studied in \cite[Chapter 2]{BellaicheChenevier-Book}.

The interpolation of crystalline periods over eigenvarieties \cite{Kisin-OverconvergentModularForms,Liu-Triangulations} and the minimality of our eigenvariety implies that there is a natural surjective map $R_{\rho_{\pi}}^{\Ref,\min}\surject \hat{\cal O}_{X,x}^{\rig}$. In this way, a bound for the tangent space $T_{X,x}$ is obtained from any bound for the Zariski tangent space $\fr t_{\rho_{\pi}}^{\Ref,\min}$ of the deformation ring $R_{\rho_{\pi}}^{\Ref,\min}$.

Tangent spaces of deformation rings are computed using Galois cohomology. Following an idea of Bella\"iche and Chenevier in the non-critical case, we observe that the global tangent space $\fr t_{\rho_{\pi}}^{\Ref,\min}$ is naturally equipped with restriction maps to the tangent spaces $\fr t_{\rho_{\pi,\twid v}}^{\Ref}$ of the weakly refined deformation problem at the $p$-adic places (see Section \ref{subsec:weakly-refined-deformations}). One has a natural exact sequence
\begin{equation}\label{eqn:intro-sequence}
0 \goto H^1_f(G_{F^+}, \ad \rho_{\pi}) \goto \fr t_{\rho_{\pi}}^{\Ref,\min} \goto \bigdsum_{v \dvd p} \fr t_{\rho_{\pi,\twid v}}^{\Ref}/H^1_{f}(G_{F_{\twid v}},\ad \rho_{\pi,\twid v}).
\end{equation}
where $H^1_f(G_{F_{\twid v}},\ad \rho_{\pi,\twid v})$ is the local Bloch-Kato Selmer group parameterizing infinitesimal crystalline deformations of $\rho_{\pi,\twid v}$.

Our main technical result is a bound on the third term in the sequence \eqref{eqn:intro-sequence}. The following is a purely local theorem, but we state it here in the global context where it is applied.
\begin{theorem}[{Theorem \ref{theorem:best-upper-bound}}]\label{theorem:intro-local-theorem}
For each $v \dvd p$, 
\begin{equation*}
\dim \fr t_{\rho_{\pi,\twid v}}^{\Ref}/H^1_f(G_{F_{\twid v}},\ad \rho_{\pi,\twid v}) \leq \sum_{\tau: F_v^+\goto L} \ell(\sigma_{\twid v,\tau}) + c(\sigma_{\twid v,\tau}).
\end{equation*}
\end{theorem}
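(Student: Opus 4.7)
My strategy is to realize both $\fr t^{\Ref}_{\rho_{\pi,\twid v}}$ and $H^1_f(G_{F_{\twid v}},\ad \rho_{\pi,\twid v})$ as subspaces of the $(\varphi,\Gamma)$-cohomology $H^1(\ad D_{\pi,\twid v})$, and then to bound the quotient by a devissage along the filtration induced on $\ad D_{\pi,\twid v}$ by the triangulation $P_{\twid v,\bullet}$. Following the approach used by Bella\"iche--Chenevier in the non-critical case, I expect $\fr t^{\Ref}_{\rho_{\pi,\twid v}}$ to identify with a subspace coming from the cohomology of the parabolic sub-$(\varphi,\Gamma)$-module $\mathrm{Fil}^0 \ad D_{\pi,\twid v}$ of endomorphisms preserving $P_{\twid v,\bullet}$, together with a constraint on the diagonal gradeds that encodes the weakly refined condition---which is strictly weaker than trianguline in the critical case. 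The Bloch--Kato group $H^1_f$ sits inside this as the crystalline subspace.

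The graded pieces of the flag filtration on $\ad D_{\pi,\twid v}$ are the rank-one $(\varphi,\Gamma)$-modules $\cal R_{F_v^+,L}(\delta_{\twid v,i}\delta_{\twid v,j}^{-1})$ indexed by pairs $(i,j)$; by definition of $\sigma_{\twid v,\tau}$, the $\tau$-Hodge--Tate weight of $\delta_{\twid v,i}\delta_{\twid v,j}^{-1}$ is $h_{\sigma_{\twid v,\tau}(j),\tau} - h_{\sigma_{\twid v,\tau}(i),\tau}$. Running the long exact sequences in cohomology along this filtration, the regular generic hypothesis should kill the relevant $H^0$'s of the successive quotients, and a bound on $\dim \fr t^{\Ref}/H^1_f$ reduces to a sum of rank-one quotients $H^1(\cal R(\delta_{\twid v,i}\delta_{\twid v,j}^{-1}))/H^1_f$ (for the off-diagonal pieces) plus a contribution from the diagonal graded $\bigoplus_i \cal R_{F_v^+,L}$. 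The rank-one calculation, classical after Colmez, says that for a generic character $\delta$ this quotient has dimension one at exactly those embeddings $\tau$ where the $\tau$-weight of $\delta$ has the ``wrong'' sign. Under the paper's normalizations, that condition for the pair $i < j$ becomes $\sigma_{\twid v,\tau}(i) > \sigma_{\twid v,\tau}(j)$, i.e.\ $(i,j) \in \ell(\sigma_{\twid v,\tau})$, and summing over these inversions produces the $\ell(\sigma_{\twid v,\tau})$ term of the bound.

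The $c(\sigma_{\twid v,\tau})$ term should come from the diagonal gradeds: the weakly refined deformation functor allows each $\delta_{\twid v,i}$ to vary in a controlled family per embedding, but the constraint that the Hodge--Tate weights of the deformed $D_{\pi,\twid v}$ remain a $\sigma_{\twid v,\tau}$-permutation of the deformed refined weights links characters whose indices lie in a common $\langle \sigma_{\twid v,\tau}\rangle$-orbit, so that the surviving freedom per embedding equals exactly $c(\sigma_{\twid v,\tau})$. The main obstacle will be setting up the devissage cleanly: it requires a precise translation of the weakly refined deformation functor into the language of $(\varphi,\Gamma)$-modules, as well as control of boundary maps and possible cross-terms in the spectral sequence of the parabolic filtration, so that the off-diagonal and diagonal contributions combine additively. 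The regular generic hypothesis is designed precisely to furnish the $H^0$-vanishings that make these long exact sequences degenerate, but extracting the sharp upper bound---rather than a weaker cumulative one---will still require verifying surjectivity of various connecting maps.
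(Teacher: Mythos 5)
Your outline hits the correct numerology and the broad shape of the argument, but the devissage you propose would not go through as stated, because of a concrete structural obstacle that the paper has to work around.

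You propose to realize $\fr t^{\Ref}$ inside the cohomology of the parabolic $\mathrm{Fil}^0\ad D$ (endomorphisms preserving $P_{\twid v,\bullet}$) and run the flag filtration. That is essentially the Bella\"iche--Chenevier devissage for trianguline deformations, and it works in the non-critical case precisely because weakly-refined and trianguline coincide there. In the critical case, which is the entire point of the theorem, weakly-refined is strictly weaker than trianguline: the cohomology class of a weakly-refined infinitesimal deformation does \emph{not} in general lie in the image of $H^1(\mathrm{Fil}^0\ad D)\to H^1(\ad D)$. So a flag-filtration devissage on $\mathrm{Fil}^0\ad D$ misses the object you are trying to bound. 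You gesture at ``a constraint on the diagonal gradeds that encodes the weakly refined condition,'' but the weakly-refined condition does not express itself as a constraint localized on the diagonal graded; the issue is exactly that the class escapes the parabolic. The paper's replacement for the parabolic is the sub-$(\varphi,\Gamma_K)$-module $D\tensor Q_1^\dual \simeq \Hom(Q_1,D)$ of endomorphisms that kill $P_1$ (which is not contained in, nor contains, $\mathrm{Fil}^0\ad D$). The crucial technical input, which your outline correctly identifies as ``the main obstacle'' but does not resolve, is Lemma \ref{lemma:composition-zero}: a weakly-refined deformation with constant Hodge--Tate weights has class landing, modulo $H^1_f$, in $H^1_{/f}(D\tensor Q_1^\dual)$. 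This is an infinitesimal version of Kisin's interpolation of crystalline periods (\cite[Lemma 7.2]{Bergdall-ParabolineVariation}), and without it the devissage does not see the weakly-refined constraint at all. Paired with Lemma \ref{lemma:refined-hodge-tate-stable} (the class of a constant-weight weakly-refined deformation of $D$ pushes forward to a constant-weight weakly-refined deformation of $Q_1 = D/P_1$), this sets up the induction on $n$ that yields the bound $\sum_{i<j}\dim H^1_{/f}(\delta_i\delta_j^{-1})$, which then becomes $\sum_\tau\ell(\sigma_\tau)$ by the rank-one computation (Lemma \ref{lemma:characters}).

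Your explanation of where $c(\sigma_\tau)$ comes from is correct in spirit but mis-attributed. It does not arise as a cohomology contribution from the diagonal graded of $\ad D$; the paper handles the weight variation \emph{separately} from the constant-weight part, via the weight-derivative map $\mathrm{d}\eta : \fr t_D \to \bigoplus_\tau L^{\oplus n}$ of Section \ref{subsec:rel-tangent-space}, and Lemma \ref{lemma:ramification-weights} shows the image lies in the $\sigma_\tau$-coincidence subspace $V_{\sigma_\tau}$ of dimension $c(\sigma_\tau)$ (again via \cite[Lemma 7.2]{Bergdall-ParabolineVariation}). Your observation that ``indices in a common $\langle\sigma_\tau\rangle$-orbit are linked'' is exactly the content of that lemma, so you have the right picture; but phrasing it as a diagonal-graded term inside a single spectral sequence conflates two mechanisms that the paper deliberately decouples. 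Indeed a naive count from the diagonal graded $\mathcal R^{\oplus n}$ gives something much larger than $\sum_\tau c(\sigma_\tau)$, and it is only after separating off the weight-variation direction and constraining it by Lemma \ref{lemma:ramification-weights} that the correct $c(\sigma_\tau)$ appears. To fix your proposal you would need to replace the parabolic filtration by the filtration $\Hom(Q_1,P_1)\subset\Hom(Q_1,D)\subset\ad D$ (applied inductively), prove the Kisin-type containment of the constant-weight classes in $H^1_{/f}(\Hom(Q_1,D))$, and handle the Sen-weight variation as a separate factor rather than as a graded piece.
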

Combining the bound in Theorem \ref{theorem:intro-local-theorem} with the sequence \eqref{eqn:intro-sequence} and the preceding paragraphs, we get the bound in Theorem \ref{theorem:intro-theorem}.

In the non-critical case, Theorem  \ref{theorem:intro-local-theorem} is proven in \cite{BellaicheChenevier-Book} by computing the dimension of a trianguline deformation ring. The key point in our generalization is carefully measuring, in terms of the critical type, how far weakly-refined deformations are from being trianguline. For that, we separately study (I) weakly-refined deformations with constant Hodge--Tate weights and (II) the variation of Hodge--Tate weights in weakly-refined deformations.

Versions of the above results were obtained in low-dimensional cases by the author in his Ph.D. thesis \cite{Bergdall-Thesis} and later in unpublished notes. The explicit goal was to prove the smoothness part of Corollary \ref{corollary:intro-corollary}, but only the second half of the computation, referring to (II), was well understood (see \cite[Section 7]{Bergdall-ParabolineVariation} for example).

The condition of the critical types being products of distinct simple transpositions in Corollary \ref{corollary:intro-corollary} did not occur to the author until hearing in lectures at the Centre International de Rencontres Math\'ematiques (Luminy) in 2015 that a similar local result was proven by Breuil, Hellmann and Schraen \cite{BHS-Classicality}. Once the condition was noticed, the statement of Theorem \ref{theorem:intro-local-theorem} and its proof were obtained independently. Compare with \cite[Section 4]{BHS-Classicality}. The applications to eigenvarieties in this paper and in \cite{BHS-Classicality} are different.

\subsection{Organization}
We give a brief reminder on $(\varphi,\Gamma)$-modules in Section \ref{section:reminder}. In Section \ref{section:weakly-refined} we define the weakly-refined deformations and prove Theorem \ref{theorem:intro-local-theorem}. Section \ref{section:application} is dedicated to the proofs of Theorem \ref{theorem:intro-theorem} and Corollary \ref{corollary:intro-corollary}.

\subsection{Notations and conventions}\label{subsec:notations}
We fix an algebraic closure $\bar \Q_p$ and an isomorphism $\bar \Q_p \simeq \C$ which is used implicitly throughout. We assume that $L$ is a finite extension of $\Q_p$ contained in $\bar \Q_p$, and we allow $L$ to change so as to contain the image of any embedding of a $p$-adic field into $\bar \Q_p$.

Suppose $\ell$ is a prime (possibly $\ell = p$) and $K/\Q_\ell$ is a finite extension. We write $K_0$ for the maximal unramified subextension of $K$ and $\ell^{f_K}$ for the number of elements in the residue field of $K$. If $\rho: G_K \goto \GL_n(\bar \Q_p)$ is a continuous representation which is potentially semi-stable (this is automatic if $\ell\neq p$) then we write $\WD(\rho)$ for the corresponding Weil--Deligne representation over $\bar \Q_p$ (see \cite[Theorem 4.2.1]{Tate-NumberTheoreticBackground} if $\ell \neq p$ and \cite{Fontaine-RepresentationSemiStable} if $\ell = p$). As an example, if $\ell = p$ and $\rho$ is crystalline then $\WD(\rho)$ is unramified and the eigenvalues of a geometric Frobenius element are the eigenvalues of the crystalline Frobenius $\varphi^{f_K}$ acting on $D_{\cris}(\rho)$, counted with multiplicity.

Let $\rec_K: K^\times \goto G_K^{\ab}$ be the local Artin reciprocity map, normalized so that the image of a uniformizer corresponds to a geometric Frobenius element. If $\pi$ is an irreducible smooth representation of $\GL_n(K)$ on a $\bar \Q_p$-vector space we denote by $\rec(\pi)$ the $n$-dimensional Frobenius semi-simple Weil--Deligne representation over $\bar \Q_p$ given by the local Langlands correspondence \cite{HarrisTaylor-LocalLanglands}. We normalize the correspondence as follows. Let $T(K) \ci \GL_n(K)$ be the diagonal torus and $B(K) \ci \GL_n(K)$ be the upper triangular  subgroup. If $\chi = \chi_1\otimes \dotsb \otimes \chi_n$ is a character of $T(K)$  then $\rec(\pi(\chi)) = \chi_1\circ \rec_K^{-1}\oplus \dotsb \oplus \chi_n\circ \rec_K^{-1}$, where $\pi(\chi)$ is the unique irreducible unramified subquotient of the smooth, non-normalized, induction $\Ind_{B(K)}^{\GL_n(K)}(\delta_{B(K)}^{1/2}\cdot \chi)$. Here, $\delta_{B(K)} = \abs{-}_K^{n-1}\otimes \dotsb \otimes \abs{-}_K^{1-n}$ is the modulus character of $B(K)$.

If $\ell = p$, and $\delta: \cal O_K^\times \goto L^\times$ is a continuous character then for each embedding $\tau: K \inject L$ we write $\HT_\tau(\delta)$ for the $\tau$-Hodge--Sen--Tate weight of $\delta$, which is the negative of the weight defined in \cite[Definition 6.1.6]{KedlayaPottharstXiao-Finiteness}.

If $n\geq 1$ we let $S_n$ denote the group of permutations on $\set{1,\dotsc,n}$. If $\sigma \in S_n$ we write $\ell(\sigma) = \set{(i,j) \st i < j \text{ and } \sigma(i) > \sigma(j)}$ for its length and $c(\sigma)$ for the number of orbits in $\set{1,\dotsc,n}$ under the action of the group generated by $\sigma$.

\subsection{Acknowledgements}
The author thanks Christophe Breuil, David Hansen, Eugen Hellmann and Benjamin Schraen for helpful discussions and comments. The author also thanks the Centre International de Rencontres Math\'ematiques for hospitality in June 2015. Many ideas in this article were developed when the author was a graduate student at Brandeis University. Thanks are duly given to Jo\"el Bella\"iche for encouragement, numerous insightful conversations and comments on an early draft of this paper. The author was partially supported by NSF award DMS-1402005.

\section{Reminder on $(\varphi,\Gamma)$-modules}\label{section:reminder}

\subsection{$(\varphi,\Gamma_K)$-modules and triangulations}\label{subsec:first-subsec}

Let $K/\Q_p$ be a finite extension. We denote by $\cal R_K$ the Robba ring defined over $K$, i.e. the ring of series $f = \sum a_i T^i$ defined over the maximal absolutely unramified extension of $K_\infty$ and which converge on an annulus $r(f) < \abs{T} < 1$ (see \cite[Section 2]{KedlayaPottharstXiao-Finiteness}). Here $K_\infty$ is the field obtained from by adjoining all the $p$-power roots of unity to $K$. If $A$ is an affinoid $\Q_p$-algebra then we define $\cal R_{K,A} := \cal R_K\hat\tensor_{\Q_p} A$. 

Recall that if $D$ is a finite free module over a commutative ring $R$ then we say a submodule $P \ci D$ is saturated if $D/P$ is projective as an $R$-module. If $L/\Q_p$ is a finite extension then $\cal R_{K,L}$ is an adequate B\'ezout domain \cite[Proposition 4.12]{Berger-Representationp-adique} and so, for $R = \cal R_{K,L}$, projective may be replaced for free. The following lemma is \cite[Lemma 2.2.3]{BellaicheChenevier-Book} when $K = \Q_p$. The proof is no different for general $K$, so we omit it.
\begin{lemma}\label{lemma:saturated-deformation}
Let $A$ be a local Artin $L$-algebra with residue field $L$ and maximal ideal $\ideal m_A$. Suppose that $D$ is a finite free $\cal R_{K,A}$-module which contains a rank one free submodule $P \ci D$. If $P/\ideal m_A P \ci D/\ideal m_A D$ is saturated as an $\cal R_{K,L}$-module then $P$ is saturated in $D$ as well.
\end{lemma}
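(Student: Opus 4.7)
The plan is to prove, by induction on the $L$-length of $A$, the slightly stronger statement that if $D$ is a finite free $\mathcal{R}_{K,A}$-module of rank $n$ and $e \in D$ has image $\bar e$ in $D/\mathfrak{m}_A D$ lying in some $\mathcal{R}_{K,L}$-basis, then $e$ extends to an $\mathcal{R}_{K,A}$-basis of $D$. This implies the lemma: taking $e$ to be a generator of $P$, the saturation hypothesis is equivalent to $\bar e$ being part of an $\mathcal{R}_{K,L}$-basis of $D/\mathfrak{m}_A D$ (since $\mathcal{R}_{K,L}$ is an adequate B\'ezout domain, the saturated rank-one submodule $P/\mathfrak{m}_A P$ is a direct summand), and the resulting basis exhibits $D/P$ as free of rank $n-1$, hence $P$ as saturated. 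The base case $A = L$ is immediate.

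For the inductive step, choose a nonzero $x$ in the socle of $\mathfrak{m}_A$, so $x\mathfrak{m}_A = 0$ and in particular $x^2 = 0$. Set $A' := A/(x)$, a local Artin $L$-algebra of smaller length, and $D' := D/xD$, finite free of rank $n$ over $\mathcal{R}_{K,A'} = \mathcal{R}_{K,A}/x\mathcal{R}_{K,A}$. The image $e' \in D'$ reduces modulo $\mathfrak{m}_{A'}$ to $\bar e$, so by induction there exist $f_2', \dots, f_n' \in D'$ with $e', f_2', \dots, f_n'$ an $\mathcal{R}_{K,A'}$-basis of $D'$. Lift each $f_i'$ to $f_i \in D$, and let $\phi : \mathcal{R}_{K,A}^n \to D$ be the $\mathcal{R}_{K,A}$-linear map sending the standard basis vectors to $e, f_2, \dots, f_n$; by construction $\phi$ is an isomorphism modulo $x$.

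The heart of the argument is upgrading this to $\phi$ being an isomorphism. For surjectivity, the cokernel $M$ of $\phi$ satisfies $M = xM$ (since $M/xM = 0$), and combined with $x^2 = 0$ this yields $M = x^2 M = 0$. For injectivity, note that $D$ is flat over $A$, because $\mathcal{R}_{K,A} = \mathcal{R}_{K,L} \otimes_L A$ is $A$-flat and $D$ is free over $\mathcal{R}_{K,A}$; hence tensoring the short exact sequence $0 \to \ker\phi \to \mathcal{R}_{K,A}^n \to D \to 0$ with $A'$ over $A$ remains exact on the left. The right-hand map becomes $\phi \bmod x$, an isomorphism, so $\ker\phi/x\ker\phi = 0$, and the same $x^2 = 0$ trick forces $\ker\phi = 0$. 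The main obstacle being sidestepped here is the non-Noetherianity of $\mathcal{R}_{K,A}$, which would normally preclude any control of the possibly infinitely generated kernel; flatness of $D$ over $A$ combined with the nilpotence of the chosen socle element $x$ lets one avoid finiteness hypotheses entirely.
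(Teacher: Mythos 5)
Your proof is correct. The paper defers to Bella\"iche--Chenevier's Lemma 2.2.3 rather than reproducing an argument, so there is nothing internal to compare against directly; your d\'evissage on the $L$-length of $A$ via a socle element $x$ is the natural approach and most likely matches theirs in spirit. The key observation you isolate --- that flatness of $D$ over the Artinian base $A$ substitutes for the finiteness of $\ker\phi$ that one cannot assume over the non-Noetherian ring $\mathcal{R}_{K,A}$ --- is exactly the right mechanism for the injectivity step. The only point left implicit is that $\bar e \neq 0$ in $D/\mathfrak{m}_A D$, which is needed to apply the B\'ezout property in the reduction to your stronger claim; this is automatic, since if $e \in \mathfrak{m}_A D$ and $N$ is minimal with $\mathfrak{m}_A^N = 0$, then $\mathfrak{m}_A^{N-1}e = 0$, contradicting injectivity of $\mathcal{R}_{K,A} \to P$, $r \mapsto re$.
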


We equip $\cal R_{K,L}$ with its natural commuting actions of the Frobenius operator $\varphi$ and the group $\Gamma_K = \Gal(K_\infty/K)$ (see \cite[Definition 2.2.2]{KedlayaPottharstXiao-Finiteness}). A $(\varphi,\Gamma_K)$-module $D$ over $\cal R_{K,L}$ is a finite free $\cal R_{K,L}$-module $D$ equipped with commuting $\cal R_{K,L}$-semilinear actions of an operator $\varphi$ and the group $\Gamma_K$, such that $\varphi(D)$ generates $D$ as an $\cal R_{K,L}$-module. For coefficients more general than $L$ (e.g. Artin algebras) see \cite[Chapter 2]{BellaicheChenevier-Book} or \cite[Section 2]{KedlayaPottharstXiao-Finiteness}. The rank of $D$ is the rank of the underlying $\cal R_{K,L}$-module. We write $D^{\dual}$ for the dual $(\varphi,\Gamma_K)$-module.

There is a functor $\rho \mapsto D_{\rig}^{\dagger}(\rho)$ which defines a fully faithful embedding
\begin{equation*}
\set{\text{continuous representations $\rho: G_K \goto \GL_n(L)$}} \hookrightarrow \set{\text{rank $n$ $(\varphi,\Gamma_K)$-modules over $\cal R_{K,L}$}}.
\end{equation*}
Its essential image is the so-called \'etale $(\varphi,\Gamma_K)$-modules characterized using the theory of slope filtrations \cite[Theorem 6.10]{Kedlaya-p-adicMonodromy}. Crucially, $D_{\rig}^{\dagger}(\rho)$ may contain non-\'etale $(\varphi,\Gamma_K)$-submodules even if $\rho$ is irreducible.

Rank one $(\varphi,\Gamma_K)$-modules over $\cal R_{K,L}$ are  classified by continuous characters $\delta: K^\times \goto L^\times$. We write $\cal R_{K,L}(\delta)$ for the $(\varphi,\Gamma_K)$-module corresponding to $\delta$ by \cite[Construction 6.2.4]{KedlayaPottharstXiao-Finiteness}.  If $D$ is a $(\varphi,\Gamma_K)$-module over $\cal R_{K,L}$ then we write $D(\delta) := D\tensor_{\cal R_{K,L}} \cal R_{K,L}(\delta)$ for the ``twist'' of $D$ by $\delta$.

Important constructions in the theory of Galois representations extend to the category of $(\varphi,\Gamma_K)$-modules. For example, a $(\varphi,\Gamma_K)$-module has Galois cohomology $H^{\bullet}(D)$ concentrated in degree at most two (\cite{Herr-Cohomology,Liu-CohomologyDuality}). If $\delta:K^\times \goto L^\times$ is a continuous character we write $H^{\bullet}(\delta)$ in lieu of $H^{\bullet}(\cal R_{K,L}(\delta))$. We also have Fontaine's notions of de Rham, crystalline, etc. for $(\varphi,\Gamma_K)$-modules. For example, $D_{\crys}(D) = D[1/t]^{\Gamma_K}$ where $t \in \cal R_{\Q_p}$ is ``Fontaine's $p$-adic $2\pi i$". (See \cite{Berger-Representationp-adique,Berger-EquationsDifferentielles} for details.)

A triangulation of a $(\varphi,\Gamma_K)$-module $D$ over $\cal R_{K,L}$ is a filtration
\begin{equation*}
P_{\bullet}: 0 = P_0 \sci P_1 \sci \dotsb \sci P_{n-1} \sci P_n = D
\end{equation*}
of $D$ by saturated $(\varphi,\Gamma_K)$-submodules. The parameter of $P_{\bullet}$ is the ordered tuple $(\delta_1,\dotsc,\delta_n)$ of continuous characters $\delta_j$ such that $P_j/P_{j-1} = \cal R_{K,L}(\delta_j)$.

Now let $D$ be a crystalline $(\varphi,\Gamma_K)$-module over $\cal R_{K,L}$. Thus $D_{\cris}(D)$ is a finite free $K_0\tensor_{\Q_p} L$-module equipped with a $K_0$-semilinear (but $L$-linear) operator $\varphi$ and $D_{\cris}(D)_K := D_{\cris}(D)\tensor_{K_0} K$ is equipped with a decreasing, exhaustive and separated filtration $\Fil^{\bullet}$ by $K\tensor_{\Q_p} L$-submodules (the Hodge filtration). The operator $\varphi^{f_K}$ is $K_0$-linear and we refer to its eigenvalues as the crystalline eigenvalues of $D$. Once $L$ is sufficiently large, every crystalline eigenvalue lies in $L^\times \ci (K_0\tensor_{\Q_p} L)^\times$ (compare with the proof of Lemma \ref{lemma:equivalent-conditions}).

If $P_{\bullet}$ is a triangulation of a crystalline $(\varphi,\Gamma_K)$-module $D$ then $D_{\cris}(P_j) \ci D_{\cris}(D)$ is a filtered $\varphi$-submodule of rank $j$. Thus, a triangulation defines an ordering $(\phi_1,\dotsc,\phi_n)$ of the crystalline eigenvalues by declaring the first $j$ eigenvalues appear in $D_{\cris}(P_j)$. If $D$ has distinct crystalline eigenvalues this defines a bijection (see \cite{Berger-EquationsDifferentielles})
\begin{equation}\label{eqn:ref-tri}
\set{\text{triangulations of $D$}} \longleftrightarrow \set{\text{orderings of crystalline eigenvalues for $D$}}.
\end{equation}
Let us briefly recall the ``matching" of weights in this bijection. If $\tau: K \inject L$ is an embedding, the filtration $\Fil^{\bullet}$ on $D_{\cris}(D)_K$ equips $D_{\cris}(D)_K\tensor_{K,\tau} L$ with an exhaustive and separated filtration by $L$-subspaces whose jumps are the $\tau$-Hodge--Tate weights $h_{1,\tau} \leq h_{2,\tau} \leq \dotsb \leq h_{n,\tau}$. Given an ordering $(\phi_1,\dotsc,\phi_n)$ and an embedding $\tau$ we write $(s_{1,\tau},\dotsc,s_{n,\tau})$ for the re-ordering of $\set{h_{i,\tau}}$ such that the induced filtration on $\sum_{i=1}^j D_{\cris}(D)^{\varphi^{f_K}=\phi_i} \tensor_{K_0,\tau} L$ has weights $\set{s_{1,\tau},\dotsc,s_{j,\tau}}$. On the other hand, a triangulation has its parameter $(\delta_1,\dotsc,\delta_n)$ and each character $\delta_i$ has a $\tau$-Hodge--Tate weight $\HT_\tau(\delta_j)$ (see Section \ref{subsec:notations}). The weights match up in that, through \eqref{eqn:ref-tri}, we have $\HT_\tau(\delta_j) = s_{j,\tau}$ for all $\tau$ and $1\leq j \leq n$.

\begin{definition}\label{defi:critical-type}
Let $D$ be a crystalline $(\varphi,\Gamma_K)$-module such that the $\tau$-Hodge--Tate weights $\set{h_{i,\tau}}$ are distinct for each $\tau$ and let $P_{\bullet}$ be a triangulation of $D$ with parameter $(\delta_1,\dotsc,\delta_n)$. The critical type of $P_{\bullet}$ is the collection of permutations $(\sigma_\tau)_{\tau}$ such that $\HT_\tau(\delta_j) = h_{\sigma_\tau(j),\tau}$ for $j=1,\dotsc,n$. We say $P_{\bullet}$ is  non-critical if $\sigma_\tau = \id$ for each $\tau$.
\end{definition}

\begin{remark}
It may be advantageous to see the critical type of a triangulation as lying in the Weyl group of $\Res_{K/\Q_p} \GL_n$ (see \cite{Breuil-LocallyAnalyticSocle2,BHS-Classicality}).
\end{remark}
\begin{example}
Let $(\delta_1,\dotsc,\delta_n)$ be an $n$-tuple of continuous characters $\delta_j : \Q_p^\times \goto L^\times$ such that $\HT(\delta_1) < \dotsb < \HT(\delta_n)$. If $D = \bigdsum_{j=1}^n \cal R_{\Q_p,L}(\delta_j)$ and $\sigma \in S_n$ then the triangulation
\begin{equation*}
(0) \sci \cal R_{K,L}(\delta_{\sigma(1)}) \sci \dotsb \sci \bigdsum_{i=1}^j \cal R_{K,L}(\delta_{\sigma(i)}) \sci \dotsb \sci D
\end{equation*}
has critical type $\sigma$. In particular, only one triangulation of $D$ is non-critical.
\end{example}

\subsection{Deformation theory}
Continue to let $L/\Q_p$ be a finite extension contained in the fixed algebraic closure $\bar \Q_p$. Denote by $\fr{AR}_L$ the category of local Artin $L$-algebras with residue field $L$. For example, the ring of dual numbers $L[\varepsilon] = L[u]/(u^2)$ is in $\fr{AR}_L$. Every element $A \in \fr{AR}_L$ is considered a topological ring with the topology defined by its maximal ideal $\ideal m_A$ and, by definition, morphisms are continuous ring morphisms. A functor $\fr X : \fr{AR}_L \goto \Set$ is (pro)-representable if there exists a complete local noetherian $L$-algebra $R_{\fr X}$ with residue field $L$ and $\Hom_{\cont}(R_{\fr X},A) = \fr X(A)$ for all elements $A \in \fr{AR}_L$.

Let $D$ denote a $(\varphi,\Gamma_K)$-module over $\cal R_{K,L}$. A deformation $D_A$ of $D$ to $A \in \fr{AR}_L$ is a $(\varphi,\Gamma_K)$-module over $\cal R_{K,A}$ together with an isomorphism $\pi : D_A \tensor_A L \simeq D$. An isomorphism between deformations $(D_A,\pi)$ and $(D_A',\pi')$ is a $(\varphi,\Gamma_K)$-equivariant isomorphism $\alpha: D_A \overto{\simeq} D_A'$ such that $\pi' \compose \alpha = \pi$. Thus, we have a functor
\begin{equation*}
\fr X_D(A) :=\set{ \text{isomorphism classes of deformations of $D$ to $A$}},
\end{equation*}
which we call the universal deformation functor of $D$. We have that $\fr X_D(L) = \set{D}$ is a single point, and the universal deformation functor of $D$ admits a well-defined, finite-dimensional, Zariski tangent space $\fr t_D := \fr X_D(L[\varepsilon])$ (see \cite[Sections 18 and 23]{Mazur-Fermat-Deformations} and \cite[Proposition 3.4]{Chenevier-InfiniteFern}).

The tangent space $\fr t_D$ admits a canonical description in terms of Galois cohomology. Namely, a deformation $\twid D$ of $D$ to $L[\varepsilon]$ is naturally an extension $0 \goto D \goto \twid D \goto D \goto 0$ in the category of $(\varphi,\Gamma_K)$-modules over $\cal R_{K,L}$, where the submodule is $\varepsilon \twid D$ and the quotient is $\twid D/\varepsilon \twid D$. The association of $\twid D$ to its extension class defines a canonical $L$-linear isomorphism
\begin{equation*}
\fr t_D \simeq \Ext^1_{(\varphi,\Gamma_K)}(D,D) \simeq  H^1(\ad D).
\end{equation*}
See \cite[Proposition 3.6(ii)]{Chenevier-InfiniteFern} for a direct construction going from $\fr t_D$ to $H^1(\ad D)$.

If $\fr X' \ci \fr X$ is an inclusion of functors on $\fr{AR}_L$ then we recall that there is a notion of $\fr X'$ being relatively representable over $\fr X$ (see \cite[Section 19]{Mazur-Fermat-Deformations}). Subfunctors $\fr X' \ci \fr X_D$ are relatively representable if and only if for every morphism of functors $\fr X_D \goto \fr Y$ with $\fr Y$ representable, the base change $\fr Y' := \fr X' \times_{\fr X_D} \fr Y$ is representable. In particular being relatively representable is stable under products over $\fr X_{D}$.

%The criteria of Schlessinger and Ramikrishna (\cite[Sections 23 and 25]{Mazur-Fermat-Deformations} are often used to check if subfunctors $\fr X' \ci \fr X_D$ are relatively representable.

\subsection{Deformations of algebraic characters}\label{subsec:algebraic-char-deformations}
If $A$ is an affinoid $\Q_p$-algebra then let $\cal W(A) = \Hom_{\cont}((\cal O_K^\times)^{n}, A^\times)$. This defines a rigid analytic space over $\Q_p$ (a disjoint union of polydiscs) called the $p$-adic weight space of ${\GL_n}_{/K}$. Since $\cal W$ is smooth over $\Q_p$, if $A \in \fr{AR}_L$ then the canonical morphism $\cal W(A) \goto \cal W(L)$ is surjective. Here we explicitly describe the preimage of $\Q_p$-algebraic elements of $\cal W(L)$.

Let $\underline h = (h_\tau)_{\tau}$ be a collection of integers and $z^{\underline h} : K^\times \goto L^\times$ be given by
\begin{equation*}
z \overset{z^{\underline h}}{\longmapsto} \prod_{\tau} \tau(z)^{h_\tau}.
\end{equation*}
The character $z^{\underline h}$ has Hodge--Tate weights $(-h_\tau)_\tau$. Every $\Q_p$-algebraic character of $K^\times$ is $z^{\underline h}$ for some $\underline h$. We abuse notation and write $\underline h$ for $z^{\underline h}$ as well.\footnote{Warning: if $D$ is a $(\varphi,\Gamma_K)$-module then $D(\underline h)$ is {\em not} necessarily a Tate twist.}

We now restrict to $\cal O_K^\times$. We have that $\cal O_K^\times = \mu \times U$ where $\mu$ is the torsion subgroup and $U \ci \cal O_K^\times$ is a finite free $\Z_p$-submodule. If $\alpha \in \cal O_K^\times$ we write $\alpha = \omega(\alpha)\langle \alpha \rangle$ where $\langle \alpha \rangle \in U$ and $\omega(\alpha) \in \mu$. Let $\underline h$ be as above and suppose $A \in \fr{AR}_L$. Suppose that for each $\tau$ we choose $\eta_\tau \in A$ such that $\eta_\tau \congruent h_\tau \bmod \ideal m_A$. Then, we define a character $\underline\eta: \cal O_K^\times \goto A^\times$ by
\begin{equation*}
z \overset{\underline \eta}{\longmapsto} \prod_{\tau} \tau(z)^{\eta} := \prod_{\tau} \exp\bigl((\eta_\tau -h_\tau)\cdot \tau(\log\langle z\rangle)\bigr) \cdot \tau(z)^{h_\tau}
\end{equation*}
(the exponential converges in $A^\times$ because $\eta_\tau - h_\tau \in \ideal m_A$).  The preimage of $\underline h$ under $\cal W(A) \goto \cal W(L)$ is exactly the set of characters $\underline \eta$.

Note that $\underline h$ is a character of $K^\times$, but extending $\underline \eta$ to $K^\times$ requires a choice. After fixing a uniformizer $\varpi_K \in K^\times$, we denote by $\underline \eta_{\varpi_K}$ the character of $K^\times$ which acts as $\underline \eta$ on $\cal O_K^\times$ and sends $\varpi_K$ to $1$. This defines a rank one $(\varphi,\Gamma_K)$-module $\cal R_{K,A}(\underline \eta_{\varpi_K})$ over $\cal R_{K,A}$ whose Hodge--Sen--Tate  weights are $(-\eta_\tau)_{\tau}$. The character $\underline \eta_{\varpi_K}$ depends on $\varpi_K$, but if $\varpi_K'$ is another choice of uniformizer then $\underline \eta_{\varpi_K}\cdot \underline \eta_{\varpi_K'}^{-1}$ is crystalline.

\subsection{Hodge--Tate deformations}\label{subsec:hodge-tate-deformations}
Let $D$ be a $(\varphi,\Gamma_K)$-module over $\cal R_{K,L}$. We assume $D$ is Hodge--Tate with distinct Hodge--Tate weights. Let $A \in \fr{AR}_L$ and $D_A$ be a deformation of $D$ to $A$. For each embedding $\tau: K \inject L$, the $\tau$-Sen polynomial $P^{\sen,\tau}_{D_A}(x) \in A[x]$ has distinct roots modulo $\ideal m_A$ and so $P^{\sen,\tau}_{D_A}$ completely factors over $A$ by Hensel's lemma. Thus for each $\tau$-Hodge--Tate weight $h_\tau \in \Z$ of $D$ there is a unique $\tau$-Hodge--Sen--Tate weight $\eta_\tau \in A$ of $D_A$ such that $\eta_\tau \congruent h_\tau \bmod \ideal m_A$. We say $h_\tau$ is a constant weight if $\eta_\tau = h_\tau$. Then, we define
\begin{equation*}
\fr X_D^{h_\tau}(A) = \set{\text{deformations $D_A$ of $D$ to $A$ such that the weight $h_\tau$ is constant}}.
\end{equation*}
By definition this defines a subfunctor $\fr X_D^{h_\tau} \ci \fr X_D$. If $\set{h_{i,\tau}}$ is the set of all the Hodge--Tate weights of $D$ then the Hodge--Tate deformation functor is defined to be
\begin{equation*}
\fr X_D^{\HT} :=  \bigintersect_{i,\tau} \fr X_D^{h_{i,\tau}}.
\end{equation*}
A deformation $D_A$ is Hodge--Tate if and only if its Hodge--Sen--Tate weights are constant (integers), if and only if it is Hodge--Tate as a $(\varphi,\Gamma_K)$-module over $\cal R_{K,A}$.
\begin{proposition}\label{prop:const-wt-rep}
The subfunctor $\fr X_D^{h_{i,\tau}} \ci \fr X_D$ is relatively representable for each $i,\tau$. Thus, $\fr X_D^{\HT} \ci \fr X_D$ is relatively representable as well.
\end{proposition}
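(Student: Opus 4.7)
The plan is to realize $\fr X_D^{h_{i,\tau}}$ as being cut out from $\fr X_D$ by a single functorially-defined equation, and then invoke the relative representability criterion recalled in Section~2. The key input is the discussion preceding the statement: by the distinct weights hypothesis and Hensel's lemma, for each $A \in \fr{AR}_L$ and each deformation $D_A$ of $D$ to $A$, there is a unique root $\eta_{i,\tau}(D_A) \in A$ of the $\tau$-Sen polynomial $P^{\sen,\tau}_{D_A}$ reducing to $h_{i,\tau}$ modulo $\ideal m_A$; the defining condition of $\fr X_D^{h_{i,\tau}}(A)$ is precisely $\eta_{i,\tau}(D_A) = h_{i,\tau}$. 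Since the Sen polynomial is compatible with base change and the Hensel root is unique, $\eta_{i,\tau}$ is functorial: $\eta_{i,\tau}(f_* D_A) = f(\eta_{i,\tau}(D_A))$ for any morphism $f : A \to B$ in $\fr{AR}_L$.

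For the main claim, I would consider a morphism $\fr Y \to \fr X_D$ from a functor $\fr Y$ represented by a complete local noetherian $L$-algebra $R$ with residue field $L$. This provides compatible deformations $D_{R_n}$ over the Artin quotients $R_n := R/\ideal m_R^{n+1}$, and the corresponding Hensel roots $\eta_{i,\tau}(D_{R_n}) \in R_n$ assemble by functoriality into an element $\eta_{i,\tau}^R \in R = \varprojlim_n R_n$. For any $A \in \fr{AR}_L$, a morphism $R \to A$ lands in $\fr X_D^{h_{i,\tau}}(A)$ if and only if $\eta_{i,\tau}^R - h_{i,\tau}$ is sent to zero. Consequently $\fr Y \times_{\fr X_D} \fr X_D^{h_{i,\tau}}$ is pro-represented by the quotient $R/(\eta_{i,\tau}^R - h_{i,\tau})$, which establishes the relative representability of $\fr X_D^{h_{i,\tau}}$.

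The second assertion is then immediate: $\fr X_D^{\HT} = \bigcap_{i,\tau} \fr X_D^{h_{i,\tau}}$ is a finite iterated fiber product over $\fr X_D$, and the stability of relative representability under products over $\fr X_D$ (noted in Section~2) concludes. The only mildly delicate point in this plan is the base-change compatibility of the Sen polynomial for families of $(\varphi, \Gamma_K)$-modules with coefficients in Artin $L$-algebras, but this is standard: it follows from the functoriality of the Sen operator on the Sen module attached to $D_A$, combined with uniqueness in Hensel's lemma to match the root under base change. No other step presents a serious obstacle.
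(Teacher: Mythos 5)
Your proof is correct, but it takes a genuinely different route from the paper's. The paper recasts the defining condition as a closure property: $D_A \in \fr X_D^{h_{i,\tau}}(A)$ if and only if the Sen operator acts semi-simply on the generalized eigenspace for $h_{i,\tau}$ inside $D_{\sen}(D_A)$ viewed over $\cal R_{K,L}$; because this property is closed under subquotients and direct sums, Ramakrishna's criterion (\cite[Section 25]{Mazur-Fermat-Deformations}) gives relative representability abstractly. You instead produce the representing object by hand using the functoriality of the Hensel root: for a morphism $\fr Y \to \fr X_D$ with $\fr Y$ pro-represented by $R$, the roots $\eta_{i,\tau}(D_{R_n})$ assemble into $\eta_{i,\tau}^R \in R$ and the fiber product is pro-represented by $R/(\eta_{i,\tau}^R - h_{i,\tau})$. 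The two packagings match --- multiplication by $\eta_{i,\tau}$ on the rank-one generalized eigenspace is semi-simple over $L$ precisely when the nilpotent part $\eta_{i,\tau}-h_{i,\tau}$ vanishes --- so neither is wrong. Ramakrishna's criterion is quicker to invoke; your construction is more explicit and as a bonus shows that $\fr X_D^{h_{i,\tau}}$ is relatively cut out by a single element of the maximal ideal, hence has relative codimension at most one over $\fr Y$, a quantitative fact the paper's proof does not record. The base-change compatibility of the Sen polynomial that you flag as the only delicate point is indeed standard and is implicitly needed in both arguments.
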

\begin{proof}
Being relatively representable is closed under intersection, so only the first statement needs proving. Observe that $D_A \in \fr X_D^{h_{i,\tau}}(A)$ if and only if the Sen operator acts semi-simply on the generalized eigenspace for the eigenvalue $h_{i,\tau}$ inside $D_{\sen}(D_A)$ (viewing $D_A$ over $\cal R_{K,L}$) . Since this property is closed under subquotients and direct sums, the relative representability of $\fr X_{D}^{h_{i,\tau}}$ follows from Ramakrishna's criterion \cite[Section 25]{Mazur-Fermat-Deformations}.
\end{proof}

% To check condition (2) we need to start with $A \goto C$ and $B\goto C \in \fr{AR}_L$ and a deformation $D_{A\times_C B}$ of $D$ to $A\times_C B$ such that $D_A$ and $D_B$ have a constant weight $h_{\tau}$. We have to show that $D_{A\times_C B}$ has constant weight $h_{\tau}$. First, we note $h_{\tau}$ is a constant weight in $D_{A\times_C B}$ if and only if the Sen operator acting on $D_{\sen}(D_{A\times_C B})$ acts semi-simply on the generalized eigenspace for the integer $h_{\tau}$ (here we view $D_{A\times_C B}$ as a $(\varphi,\Gamma_K)$-module over $\cal R_{K,L}$). Second, it is clear the natural map $\cal R_{K,A\times_C B} \goto \cal R_{K,A} \dsum \cal R_{K,B}$ is an injective map of $(\varphi,\Gamma_K)$-modules over $\cal R_{K,L}$. Thus we may realized $D_{A\times_C B}$ naturally as a $(\varphi,\Gamma_K)$-submodule of $D_A \oplus D_B$ over $\cal R_{K,L}$. Since the restriction of a semi-simple operator is semi-simple we are finished.
%\end{proof}

\subsection{Crystalline deformations}\label{subsec:crystalline-deformations}
Throughout this section we denote by $D$ a crystalline $(\varphi,\Gamma_K)$-module over $\cal R_{K,L}$. The fine Selmer group for $D$, generalizing the corresponding notion for Galois representations \cite{BlochKato-TamagawaNumbersOfMotives}, is defined by
\begin{equation*}
H^1_f(D) := \ker\left(H^1(D) \goto H^1(\Gamma_K, D[t^{-1}])\right).
\end{equation*}
The cohomology on the right is the continuous cohomology of the profinite group $\Gamma_K$. We refer to \cite[Section 3.1]{Pottharst-FiniteSlope} and \cite[Section 1]{Benois-GreenbergL} for the facts that follow.

First, by \cite[Equation 3-2]{Pottharst-FiniteSlope} the dimension of $H^1_f(D)$ is computed by
\begin{equation}\label{eqn:selmer-dimension}
\dim_L H^1_f(D) = \dim_L H^0(D) + \dim_L D_{\dR}(D)/D_{\dR}^+(D).
\end{equation}
Second, fine Selmer groups arise naturally as tangent spaces to a deformation problem. Indeed, we define the crystalline deformation functor as
\begin{equation*}
\fr X_{D,f}(A) = \set{D_A \in \fr X_D(A) \st \text{$D_A$ is crystalline}}.
\end{equation*}
Since being crystalline is closed under direct sums and subquotients, Ramakrishna's criterion \cite[Section 25]{Mazur-Fermat-Deformations} implies that $\fr X_{D,f} \ci \fr X_D$ is relatively representable.  We write  $\fr t_{D,f} := \fr X_{D,f}(L[\varepsilon])$ for the Zariski tangent space to $\fr X_{D,f}$.

Since $D$ is crystalline, so is $\ad D$ and the Galois cohomology $H^1(\ad D)$ contains the subspace $H^1_f(\ad D)$. It is explained in \cite[Section 3]{Pottharst-FiniteSlope} that an extension class
\begin{equation*}
0 \goto D \goto \twid D \goto D \goto 0
\end{equation*}
in $\Ext^1_{(\varphi,\Gamma_K)}(D,D) \simeq H^1(\ad D)$ lies in $H^1_f(\ad D)$ if and only if $\twid D$ is crystalline. If $\Ext^1_f(D,D)$ denotes the corresponding subspace of crystalline extensions then the inclusion $\fr t_{D,f} \ci \fr t_D$ induces isomorphisms $\fr t_{D,f} \simeq \Ext^1_f(D,D) \simeq H^1_f(\ad D)$.

We record here a property of Selmer groups. If $D$ is a crystalline $(\varphi,\Gamma_K)$-module then we write $H^1_{/f}(D) := H^1(D)/H^1_f(D)$.
\begin{lemma}\label{lemma:exactness-selmer-groups}
If $0 \goto D_1 \goto D_2 \goto D_3 \goto 0$ is a short exact sequence of crystalline $(\varphi,\Gamma_K)$-modules and $H^2(D_1) = 0$ then the canonical morphisms induce a short exact sequence
\begin{equation*}
0 \goto H^1_{/f}(D_1) \goto H^1_{/f}(D_2) \goto H^1_{/f}(D_3) \goto 0.
\end{equation*}
\end{lemma}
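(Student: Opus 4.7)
The plan is to verify, in turn, injectivity, surjectivity, and middle exactness of the claimed three-term sequence. Surjectivity of $H^1_{/f}(D_2) \to H^1_{/f}(D_3)$ will be immediate: the hypothesis $H^2(D_1) = 0$, combined with the long exact cohomology sequence, forces $H^1(D_2) \to H^1(D_3)$ to already be surjective, and this descends to the quotient.

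For injectivity of $H^1_{/f}(D_1) \to H^1_{/f}(D_2)$, I would represent a class $c_1 \in H^1(D_1)$ as an extension $0 \to D_1 \to E_1 \to \cal R_{K,L} \to 0$. Its image in $H^1(D_2)$ is represented by the pushout $E_2$ along $D_1 \hookrightarrow D_2$, and the standard pushout construction realizes $E_1$ as a saturated $(\varphi, \Gamma_K)$-submodule of $E_2$ with quotient $D_3$. If the image of $c_1$ lies in $H^1_f(D_2)$, that is, $E_2$ is crystalline, then $E_1$ is crystalline as well, since a saturated sub-$(\varphi,\Gamma_K)$-module of a crystalline module is crystalline (a quick dimension count on $\dim_{K_0} D_{\crys}(-)$ forces equality at every stage). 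Consequently $c_1 \in H^1_f(D_1)$.

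Middle exactness I would attack via a dimension count. The composition $H^1_{/f}(D_1) \to H^1_{/f}(D_2) \to H^1_{/f}(D_3)$ is evidently zero, so by the preceding steps it suffices to show the alternating sum $\dim H^1_{/f}(D_1) - \dim H^1_{/f}(D_2) + \dim H^1_{/f}(D_3)$ vanishes. Formula \eqref{eqn:selmer-dimension} identifies $\dim H^1_f(D_i)$ with $\dim H^0(D_i) + \dim t_{D_i}$, where $t_{D_i} = D_{\dR}(D_i)/D_{\dR}^+(D_i)$. The quantity $\dim t_D$ is additive on the given short exact sequence, because $D_{\dR}$ is exact on crystalline modules and the Hodge filtration is strictly compatible with morphisms of crystalline $(\varphi, \Gamma_K)$-modules. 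Combining this with the Euler characteristic of the long exact cohomology sequence, truncated on the right by $H^2(D_1) = 0$, one obtains $\sum_i (-1)^i \dim H^1_f(D_i) = \sum_i (-1)^i \dim H^1(D_i)$, and subtracting gives the desired vanishing. The only delicate point is the strict compatibility of the Hodge filtration (equivalently, the additivity of $\dim t_D$), but this should follow directly from the term-wise crystallinity of the sequence and the exactness of $D_{\crys}$ on crystalline $(\varphi,\Gamma_K)$-modules.
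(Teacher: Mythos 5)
Your proof is correct, but it takes a genuinely different route from the paper. The paper quotes a result of Benois (\cite[Corollary 1.4.6]{Benois-GreenbergL}) asserting the exactness of $H^0(D_3) \to H^1_f(D_1) \to H^1_f(D_2) \to H^1_f(D_3) \to 0$, places it over the ordinary long exact sequence (whose right-exactness uses $H^2(D_1)=0$), and reads off the result by a snake-lemma comparison of cokernels. You instead reconstruct the three required statements from scratch: surjectivity directly from the long exact sequence and $H^2(D_1)=0$; injectivity by realizing classes of $H^1(D_i)$ as extensions and using that in the saturated sequence $0 \to E_1 \to E_2 \to D_3 \to 0$ crystallinity of $E_2$ and $D_3$ forces crystallinity of $E_1$ via left-exactness of $D_{\crys}$ and the universal bound $\dim_L D_{\crys}(-) \leq (K_0:\Q_p)\cdot\mathrm{rank}(-)$; and middle exactness by a dimension count combining formula \eqref{eqn:selmer-dimension}, the Euler characteristic of the truncated long exact sequence, and the additivity of $\dim_L D_{\dR}(-)/D_{\dR}^+(-)$ on short exact sequences of crystalline modules (a consequence of strictness of the Hodge filtration, or equivalently of exactness of $D_{\sen}$ and the behavior of Hodge--Tate weights). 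Both arguments are valid; yours is more self-contained in that it avoids the Benois citation, at the cost of being longer and invoking the (standard but not quoted in the paper) strictness of the de Rham filtration. One small stylistic point: when you assert that a saturated sub-$(\varphi,\Gamma_K)$-module of a crystalline module is crystalline ``by a dimension count,'' it would be cleaner to make explicit that the count uses the inequality $\dim_L D_{\crys}(E_2/E_1) \leq (K_0:\Q_p)\cdot\mathrm{rank}(E_2/E_1)$ for the quotient, not equality at every stage a priori.
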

\begin{proof}
Consider the commuting diagram
\begin{equation*}\label{eqn:benois}
\xymatrix{
H^0(D_3) \ar@{=}[d] \ar[r] & H^1_f(D_1) \ar@{^{(}->}[d] \ar[r] & H^1_f(D_2)  \ar@{^{(}->}[d] \ar[r] & H^1_f(D_3) \ar@{^{(}->}[d] \ar[r] & 0\\
H^0(D_3) \ar[r] & H^1(D_1) \ar[r] & H^1(D_2) \ar[r] & H^1(D_3) \ar[r] & 0.
}
\end{equation*}
The top row is exact by \cite[Corollary 1.4.6]{Benois-GreenbergL}. The bottom row is exact because $H^2(D_1) = (0)$. From the snake lemma we get a short exact sequence
\begin{equation}\label{eqn:first-term}
0 \goto {\coker(H^0(D_3) \goto H^1(D_1))\over \coker(H^0(D_3) \goto H^1_f(D_1))} \goto H^1_{/f}(D_2) \goto H^1_{/f}(D_3) \goto 0.
\end{equation}
Since the first term of \eqref{eqn:first-term} equals $H^1_{/f}(D_1)$, we are finished.
\end{proof}

\section{Weakly-refined deformations}\label{section:weakly-refined}

\subsection{Deforming crystalline eigenvalues}\label{subsec:deform-crys}

Let $D$ be a crystalline $(\varphi,\Gamma_K)$-module over $\cal R_{K,L}$. We assume that $h_{1,\tau} = 0$ is the unique least $\tau$-Hodge--Tate weight for each embedding $\tau: K \goto L$. Following Section \ref{subsec:hodge-tate-deformations} we let $\fr X_D^0 = \bigintersect_\tau \fr X_{D}^{0_\tau}$ be the relatively representable subfunctor of deformations with constant Hodge--Tate weight zero at each embedding. 

Suppose that $\Phi$ is a crystalline eigenvalue for $D$ (note that it appears in $D_{\crys}(D) = D_{\crys}^+(D) = D^{\Gamma_K}$). Now set
\begin{equation}\label{eqn:kisin-functor-def}
\fr X_D^{0,\Phi}(A) = \set{\text{\parbox{11cm}{\centering $D_A \in \fr X_D^0(A)$ such that $D_{\crys}^+(D_A)^{\varphi^{f_K} = \Phi_A}$ is free of rank one over $K_0 \tensor_{\Q_p} A$ for some $\Phi_A \in A^\times$ and $\Phi_A \congruent \Phi \bmod \ideal m_A$}}}.
\end{equation}
This clearly defines a subfunctor of $\fr X_D^{0,\Phi} \ci \fr X_D$. 
\begin{proposition}\label{prop:kisin-rel-rep}
If $\Phi$ is a simple crystalline eigenvalue for $D$ then $\fr X_D^{0,\Phi} \ci \fr X_D$ is relatively representable.
\end{proposition}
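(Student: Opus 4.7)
The plan is to mimic the proof of Proposition~\ref{prop:const-wt-rep}: I will identify a property $\cal P$ of $(\varphi,\Gamma_K)$-modules over $\cal R_{K,A}$, closed under subquotients and direct sums, which cuts out $\fr X_D^{0,\Phi}$ as a subfunctor of $\fr X_D^0$, and then invoke Ramakrishna's criterion \cite[Section 25]{Mazur-Fermat-Deformations}. Since $\fr X_D^0 \ci \fr X_D$ is already relatively representable by Proposition~\ref{prop:const-wt-rep}, it is enough to work inside $\fr X_D^0$.

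First I would set up two inputs. (a) For any $D_A \in \fr X_D^0(A)$, the constant-weight-zero condition guarantees that $D_{\crys}^+(D_A)$ is a free $K_0 \tensor_{\Q_p} A$-module whose formation commutes with base change and reduces to $D_{\crys}^+(D)$ modulo $\ideal m_A$, on which $\varphi^{f_K}$ acts $K_0 \tensor_{\Q_p} A$-linearly. (b) Because $\Phi$ is a simple eigenvalue, Hensel's lemma applied to the characteristic polynomial of $\varphi^{f_K}$ on $D_{\crys}^+(D_A)$ --- computed in each factor of the product decomposition $K_0 \tensor_{\Q_p} A \simeq \prod_{\tau_0 : K_0 \inject L} A$ (the factors being permuted cyclically by $\varphi$ so that they share a common characteristic polynomial in $A[x]$) --- yields a unique factorization $(x - \Phi_A) Q_A(x)$ with $\Phi_A \in A^\times$ reducing to $\Phi$ and $Q_A(\Phi_A)$ a unit. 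This produces a functorial direct sum decomposition $D_{\crys}^+(D_A) = V_A \oplus W_A$, in which $V_A$ is the generalized $\Phi_A$-eigenspace of $\varphi^{f_K}$ and is free of rank one over $K_0 \tensor_{\Q_p} A$.

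Under these identifications, the operator $\varphi^{f_K} - \Phi_A$ acts on the rank-one free module $V_A$ by multiplication by some $\alpha \in \ideal m_A \cdot (K_0 \tensor_{\Q_p} A)$, and the condition ``$D_{\crys}^+(D_A)^{\varphi^{f_K}=\Phi_A}$ is free of rank one over $K_0 \tensor_{\Q_p} A$'' becomes $\alpha = 0$ --- equivalently, $\varphi^{f_K}$ acts semi-simply on its generalized $\Phi_A$-eigenspace in $D_{\crys}^+$. Just as for the semi-simplicity of the Sen operator in the proof of Proposition~\ref{prop:const-wt-rep}, this is a property of the underlying $(\varphi,\Gamma_K)$-module that is closed under subquotients and direct sums, so Ramakrishna's criterion yields relative representability. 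The main obstacle I anticipate is justifying input (a) --- the freeness of $D_{\crys}^+$ and its compatibility with base change for constant-weight-zero deformations --- together with a careful bookkeeping in (b) of the $\tau_0$-factors and the $\varphi$-action permuting them; after that, the semi-simplicity closure argument is routine.
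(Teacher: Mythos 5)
There is a genuine gap at the start of your argument, and it is not merely technical: input (a) --- that for \emph{every} $D_A \in \fr X_D^0(A)$ the module $D_{\crys}^+(D_A)$ is free over $K_0 \tensor_{\Q_p} A$ and commutes with base change --- is false in general, and assuming it would make the proposition trivially true in the wrong way. If $D_{\crys}^+(D_A)$ were always free of the expected rank, then your Hensel factorization would always produce a free rank-one generalized eigenspace $V_A$, and on a free rank-one $K_0\tensor_{\Q_p}A$-module the operator $\varphi^{f_K}$ necessarily acts by a unit of $A$ (your cyclicity observation forces the scalar into $A^\times$), so $V_A$ itself would be the $\Phi_A$-eigenspace. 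That would prove $\fr X_D^{0,\Phi} = \fr X_D^0$, and one would never need Lemma~\ref{lemma:length-lemma} or Lemma~\ref{lemma:equivalent-conditions}. The actual situation is that $(-)^{\Gamma_K}$ and the generalized-eigenspace functor $F(-)$ used in the paper are only \emph{left} exact, so $D_{\crys}^+(D_A)$ is not a priori flat, does not commute with base change, and its length can be strictly smaller than $(K_0:\Q_p)\,\ell_A(A)$; Lemma~\ref{lemma:length-lemma} gives precisely the inequality $\ell_A(F(D_A))\leq (K_0:\Q_p)\,\ell_A(A)$, and Lemma~\ref{lemma:equivalent-conditions}(c) shows that \emph{equality} is what cuts out $\fr X_D^{0,\Phi}$. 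So (a) is not an input one can hope to supply --- it is essentially equivalent to the conclusion you want for the given deformation.

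This also explains why the analogy with Proposition~\ref{prop:const-wt-rep} breaks down: the Sen functor $D_{\sen}$ is \emph{exact} on $(\varphi,\Gamma_K)$-modules, so a sub/quotient of $D_A$ over $\cal R_{K,L}$ gives a sub/quotient of $D_{\sen}(D_A)$ and semi-simplicity of the Sen operator passes through, which is what Ramakrishna's criterion needs. By contrast $D_{\crys}^+$ (and $F$) is only left exact, so closure under quotients cannot be checked this way. The paper accordingly avoids Ramakrishna and instead verifies Schlessinger's criterion directly: conditions (1) and (2) are immediate, and condition (3) (descent along an inclusion $A \subset A'$) is proved by the length bookkeeping in Lemma~\ref{lemma:length-lemma}, using the characterization in Lemma~\ref{lemma:equivalent-conditions}. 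If you want to repair your proof, the route is to drop the freeness/base-change assumption, introduce the generalized-eigenspace functor $F(-)$, establish the length inequality and the equivalent characterizations, and then run Schlessinger's criterion --- which is exactly the structure of the argument in the paper.
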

Kisin \cite[Proposition 8.13]{Kisin-OverconvergentModularForms} and Tan \cite[Section 5.2]{Tan-Thesis} proved Proposition \ref{prop:kisin-rel-rep} for Galois representations. The rest of this subsection is devoted to a proof in the setting of $(\varphi,\Gamma_K)$-modules, adapting \cite[Section 2.3]{BellaicheChenevier-Book}. After twisting $D$ by an unramified character, we may assume that $\Phi = 1$. To emphasize this we write $\fr X_{D}^{0,1} = \fr X_{D}^{0,\Phi=1}$.

If $E$ is a $(\varphi,\Gamma_K)$-module over $\cal R_{K,L}$, we set
\begin{equation*}
F(E) = \set{e \in E^{\Gamma_K} \st (\varphi^{f_K}-1)^ne = (0) \text{ for some $n\geq 1$}} \ci D_{\crys}^+(E).
\end{equation*}
The functor $F(-)$ is left exact, and since $\Phi=1$ is a simple eigenvalue of $D$ we have $\dim_L F(D) = (K_0:\Q_p)$.

Let $A$ denote an element in $\fr{AR}_L$. If $D_A$ is a $(\varphi,\Gamma_K)$-module over $\cal R_{K,A}$ and $M$ is an $A$-module then we consider $D_A \tensor_A M$ as a $(\varphi,\Gamma_K)$-module over $\cal R_{K,L}$ with the trivial actions on $M$.  The $L$-vector space $F(D_A\tensor_A M)$ is naturally equipped with the structure of an $A$-module. We let $\ell_A$ denote the length function on $A$-modules. 

\begin{lemma}\label{lemma:length-lemma}
Let $A \in \fr{AR}_L$ and $D_A \in \fr X_D(A)$. If $M$ is a finite length $A$-module then $\ell_A(F(D_A\tensor_A M))\leq (K_0:\Q_p)\ell_A(M)$. In particular, if $I \ci A$ is an ideal then $\ell_A(F(ID_A))\leq (K_0:\Q_p) \ell_A(I)$.
\end{lemma}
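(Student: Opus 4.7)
The plan is to proceed by induction on $\ell_A(M)$, leveraging the left-exactness of $F$ together with the flatness of $D_A$ as an $A$-module. Flatness holds because $D_A$ is finite free over $\cal R_{K,A} = \cal R_{K,L}\tensor_L A$ (the completed tensor product reduces to the ordinary one since $A$ has finite $L$-dimension), and $\cal R_{K,L}\tensor_L A$ is evidently $A$-flat. Consequently, tensoring any short exact sequence of finite length $A$-modules with $D_A$ produces a short exact sequence of $(\varphi,\Gamma_K)$-modules over $\cal R_{K,L}$.

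For the base case $\ell_A(M) = 1$, the simplicity of $M$ forces $M \simeq L$, so $D_A\tensor_A L \simeq D$ and $\ideal m_A$ acts trivially on $F(D)$; thus $\ell_A(F(D)) = \dim_L F(D)$. By the paragraph preceding the lemma we may assume $\Phi = 1$, so $F(D)$ is the generalized eigenspace for $\varphi^{f_K} = 1$ inside $D_{\crys}^+(D)$. Since $\varphi^{f_K}$ acts $(K_0\tensor_{\Q_p} L)$-linearly on $D_{\crys}(D)$ and $\Phi = 1$ is a simple crystalline eigenvalue, this generalized eigenspace is free of rank one over $K_0\tensor_{\Q_p} L$, yielding $L$-dimension $(K_0:\Q_p)$, as previewed just before the lemma.

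For the inductive step, Nakayama furnishes a surjection $M\surject L$ whose kernel $M'$ satisfies $\ell_A(M') = \ell_A(M) - 1$. Tensoring with the flat $A$-module $D_A$ and applying the left-exact, $A$-linear functor $F$ gives
\[
\ell_A\bigl(F(D_A\tensor_A M)\bigr) \leq \ell_A\bigl(F(D_A\tensor_A M')\bigr) + \ell_A\bigl(F(D_A\tensor_A L)\bigr),
\]
and the desired bound follows immediately from the inductive hypothesis and the base case. For the parenthetical assertion, flatness of $D_A$ over $A$ identifies $D_A\tensor_A I$ with the submodule $ID_A \ci D_A$ via the natural multiplication map, so applying the first part to $M = I$ gives $\ell_A(F(ID_A)) \leq (K_0:\Q_p)\ell_A(I)$.

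The main obstacle is the base case: one must interpret ``simple crystalline eigenvalue'' as simplicity for $\varphi^{f_K}$ viewed as a $(K_0\tensor_{\Q_p} L)$-linear operator on $D_{\crys}(D)$, which is precisely what makes the eigenspace $(K_0:\Q_p)$-dimensional over $L$ rather than one-dimensional; once this bookkeeping is in place the induction is entirely formal.
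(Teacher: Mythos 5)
Your argument is correct and matches the paper's proof: the paper invokes ``an immediate d\'evissage using the left-exactness of $F(-)$'' together with $\dim_L F(D) = (K_0:\Q_p)$, which is exactly the induction on $\ell_A(M)$ you spell out, and the paper also derives the parenthetical statement from flatness of $D_A$ over $A$ via $ID_A \simeq D_A\tensor_A I$, just as you do.
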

\begin{proof}
The  second statement follows from the first since $D_A$ is flat over $A$ (it is even free), so $ID_A\simeq D_A\tensor_A I$. The first statement is an immediate d\'evissage using the left-exactness of $F(-)$ and the fact explained above that $\ell_A(F(D)) = \dim_L F(D) = (K_0:\Q_p)$.
\end{proof}

\begin{lemma}\label{lemma:equivalent-conditions}
Let $D_A \in \fr X_D(A)$. The following are equivalent
\begin{enumerate}
\item $D_A \in \fr X_{D}^{0,\Phi=1}(A)$.
\item $F(D_A)$ is free of rank one over $K_0\tensor_{\Q_p} A$.
\item $\ell_A(F(D_A)) = (K_0:\Q_p)\ell_A(A)$.
\item The natural map $F(D_A)/\ideal m_AF(D_A) \goto F(D)$ is an isomorphism.
\end{enumerate}
\end{lemma}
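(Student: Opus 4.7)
The plan is to establish $(1) \Leftrightarrow (2)$, the trivial $(2) \Rightarrow (3), (4)$, and the converses $(3), (4) \Rightarrow (2)$. The key tools are Lemma \ref{lemma:length-lemma} and Nakayama's lemma.

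For $(2) \Rightarrow (1)$, the operator $\varphi^{f_K}$ commutes with the $K_0 \tensor A$-structure on $F(D_A)$, so it acts on this rank-one free module by multiplication by a scalar $\Phi_A \in K_0 \tensor A$. The operator $\varphi$ cyclically permutes the $\tau$-components $F_\tau(D_A)$ while commuting with $\varphi^{f_K}$, which forces $\Phi_A$ into the diagonal copy of $A$; reduction modulo $\ideal m_A$ gives $\Phi_A \congruent 1$, so $\Phi_A \in A^\times$. The identity $F(D_A) = D_{\crys}^+(D_A)^{\varphi^{f_K} = \Phi_A}$ follows from nilpotence of $\Phi_A - 1$ and the length estimate of Lemma \ref{lemma:length-lemma}. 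The converse $(1) \Rightarrow (2)$ is analogous, starting from the rank-one eigenspace. The implications $(2) \Rightarrow (3)$ and $(2) \Rightarrow (4)$ are immediate since $K_0 \tensor A$ has $A$-length $(K_0:\Q_p)\ell_A(A)$ and reduces mod $\ideal m_A$ to $K_0 \tensor L = F(D)$.

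For $(3), (4) \Rightarrow (2)$, apply $F$ to $0 \to \ideal m_A D_A \to D_A \to D \to 0$ to obtain $0 \to F(\ideal m_A D_A) \to F(D_A) \to F(D)$, then combine with Lemma \ref{lemma:length-lemma} applied to $M = \ideal m_A$ and the trivial bound $\dim_L F(D) = (K_0:\Q_p)$ to get
\begin{equation*}
\ell_A(F(D_A)) \leq (K_0:\Q_p)\ell_A(\ideal m_A) + (K_0:\Q_p) = (K_0:\Q_p)\ell_A(A).
\end{equation*}
Condition $(3)$ saturates this bound, forcing $F(D_A) \twoheadrightarrow F(D)$ to be surjective. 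Lifting a $K_0 \tensor L$-basis of $F(D)$ to $e \in F(D_A)$ and invoking Nakayama's lemma on the $K_0 \tensor A$-module $F(D_A)$ yields $(K_0 \tensor A) \cdot e = F(D_A)$. The $\varphi$-symmetry across the $\tau$-components forces the annihilator of $e$ in $K_0 \tensor A$ to have the form $K_0 \tensor J_0$ for a single ideal $J_0 \ci A$, and the length equality from $(3)$ forces $J_0 = 0$, proving $(2)$. The implication $(4) \Rightarrow (2)$ is analogous with $(4)$ supplying the Nakayama cyclicity directly.

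The main obstacle is pinning down $\dim_L F(D_A)/\ideal m_A F(D_A) = (K_0:\Q_p)$ precisely to close the Nakayama step. The lower bound is automatic from surjectivity onto $F(D)$, while the upper bound follows from $(4)$ by definition or, under $(3)$, from saturating the length bound in Lemma \ref{lemma:length-lemma} together with the $\varphi$-cycling structure on $\tau$-components.
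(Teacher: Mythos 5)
Your overall strategy is close to the paper's: both hinge on Lemma \ref{lemma:length-lemma}, and your $(2)\Rightarrow(1)$ argument via $\varphi$-cycling on the $\tau$-factors of $K_0\tensor_{\Q_p}A$ is a valid conceptual reformulation of the paper's explicit computation $\Phi_A = x\varphi(x)\dotsb\varphi^{f_K-1}(x)\in A^\times$. However, there is a genuine gap in your $(3)\Rightarrow(2)$ step, and you have essentially identified it yourself at the end but not resolved it.

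The issue is the Nakayama step. After using the length saturation to deduce that $F(D_A)\to F(D)$ is surjective, you lift a $K_0\tensor L$-basis of $F(D)$ to some $e\in F(D_A)$ and invoke Nakayama for the $K_0\tensor A$-module $F(D_A)$. But Nakayama requires that the image of $e$ in $F(D_A)/\ideal m_A F(D_A)$ generate that module over $K_0\tensor L$ — not merely that its image in $F(D)$ be a basis. These are different because the surjection $F(D_A)/\ideal m_A F(D_A)\twoheadrightarrow F(D)$ (which you do have) can a priori have a kernel: you only control $F(\ideal m_A D_A)$, which contains $\ideal m_A F(D_A)$ but need not equal it. To make your Nakayama step legitimate you first need to know $\dim_L F(D_A)/\ideal m_A F(D_A) = (K_0:\Q_p)$, which is precisely assertion $(4)$. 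Your proposed fix — ``saturating the length bound together with the $\varphi$-cycling structure'' — is not an argument; the length saturation bounds $\ell_A(F(\ideal m_A D_A))$, not $\ell_A(\ideal m_A F(D_A))$, and the inequality $\ideal m_A F(D_A)\subseteq F(\ideal m_A D_A)$ runs in the unhelpful direction. The clean route, and the one the paper takes, is to run the cycle $(c)\Rightarrow(d)\Rightarrow(a)$: first deduce $(4)$ from $(3)$ (the paper does this by surjectivity plus a length comparison, following Bella\"iche--Chenevier), and only then lift a basis vector and prove freeness of the cyclic submodule it generates. Your intended shortcut $(3)\Rightarrow(2)$ directly skips the dimension count on $F(D_A)/\ideal m_A F(D_A)$, and without it the Nakayama appeal does not go through.

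Secondarily, your ``$(2)\Rightarrow(3),(4)$ are immediate'' is slightly too quick for $(4)$: freeness gives $F(D_A)/\ideal m_A F(D_A)\simeq K_0\tensor L$, but one still needs to see the comparison map to $F(D)$ is an isomorphism rather than just a map between abstractly isomorphic spaces; this again uses the surjectivity coming from the length argument. The annihilator analysis in your $(3)\Rightarrow(2)$ — that $\varphi$-stability of $\mathrm{Ann}(e)$ forces it to have the form $K_0\tensor J_0$ — is correct once $e$ is known to be a cyclic generator, provided you also note that the coefficient $x$ with $\varphi(e)=xe$ is a unit (it reduces modulo $\ideal m_A$ to a unit because $\varphi$ is bijective on the finite-dimensional $F(D)$).
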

\begin{proof}
First assume that $D_A \in \fr X_{D}^{0,\Phi=1}(A)$ and let $\Phi_A \in A$ be the deformation of $\Phi=1$ as in the definition \eqref{eqn:kisin-functor-def}. Then $D_{\crys}^+(D_A)^{\varphi^{f_K} = \Phi_A}$ is a free, rank one, $K_0\tensor_{\Q_p} A$-submodule of $F(D_A)$. It cannot be proper since that would imply $\ell_A(F(D_A)) > \ell_A(D_{\crys}^+(D_A)^{\varphi^{f_K} = \Phi_A}) =\ell_A(K_0\tensor_{\Q_p} A)$, which would contradict Lemma \ref{lemma:length-lemma} (applied with $M = A$). Thus, $F(D_A) = D_{\crys}^+(D_A)^{\varphi^{f_K} = \Phi_A}$ is free of rank one over $K_0\tensor_{\Q_p} A$. This proves (a) implies (b).

We clearly have (b) implies (c). Now we show (c) implies (d). Since $F(-)$ is left exact we have an exact sequence 
\begin{equation}\label{eqn:length-consider}
0 \goto F(\ideal m_AD_A) \goto F(D_A) \goto F(D).
\end{equation}
Under the assumption (c), considering the lengths in \eqref{eqn:length-consider},  Lemma \ref{lemma:length-lemma} implies that $F(D_A) \goto F(D)$ is surjective. But then it follows that  $F(D_A)/\ideal m_A F(D_A) \goto F(D)$ is onto as well. Since the two $A$-modules have the same length, we've proven (d).

It remains to prove (d) implies (a) (we will simultaneously show (d) implies (b)). First, we can choose a vector $v \in F(D_A)$ such that the image in $F(D)$ is a $K_0\tensor_{\Q_p} L$-module basis. It follows that the $K_0\tensor_{\Q_p} A$-module spanned by $v$ inside $F(D_A)$ is free of rank one.\footnote{This is a minor modification of an argument given by Bella\"iche and Chenevier in the proof of \cite[Proposition 2.3.9]{BellaicheChenevier-Book}. To reduce to their argument, use that the image of $v$ is non-zero in $F(D) \tensor_{K_0,\tau} L$ for each embedding $\tau: K_0 \inject L$.} By considering lengths, Lemma \ref{lemma:length-lemma} implies that the containment $(K_0\tensor_{\Q_p} A)\cdot v \ci F(D_A)$ is necessarily an equality. This shows (d) implies (b) and since $F(D_A)$ is $\varphi$-stable, $\varphi(v) = xv$ for some $x \in K_0\tensor_{\Q_p} A$. But then, $\varphi^{f_K}(v) = x\varphi(x)\dotsb \varphi^{f_K-1}(x)v$ where the $\varphi$ on the right-hand side is the natural $A$-linear Frobenius on $K_0\tensor_{\Q_p} A$. Since $x\varphi(x)\dotsb \varphi^{f_K-1}(x)$ lies in $A^\times \ci K_0\tensor_{\Q_p} A$ we will call it $\Phi_A$. Note that since $v$ spans $F(D)$ modulo $\ideal m_A$, we know that $\Phi_A \congruent 1 \bmod \ideal m_A$. Finally, since $v \in F(D_A) \ci D_A^{\Gamma_K} = D_{\crys}^+(D_A)$ as well, we've shown that $v \in D_{\crys}(D_A)^{\varphi^{f_K} = \Phi_A}$. But then we have $(K_0\tensor_{\Q_p} A)\cdot v = D_{\crys}^+(D_A)^{\varphi^{f_K} = \Phi_A} = F(D_A)$ and we've shown (d) implies (a).
\end{proof}
We are now ready to prove Proposition \ref{prop:kisin-rel-rep}.
If $D_A \in \fr X_D(A)$ and $A \goto A'$ is an arrow in $\fr{AR}_L$ then we write $D_{A'} := D_A \tensor_A A'$.
\begin{proof}[Proof of Proposition \ref{prop:kisin-rel-rep} with $\Phi=1$]
To show $\fr X_D^{0,\Phi=1} \ci \fr X_D$ is relatively representable we will use Schlessinger's criterion \cite[Section 23]{Mazur-Fermat-Deformations}, labeled by conditions (1), (2), and (3) in {\em loc. cit.} Condition (1) is clear.  Given (1) and (3) in {\em loc. cit.} it is enough to check condition (2) with $C = L$, but that it is also clear (compare with \cite[Propositions 8.13 and 8.7]{Kisin-OverconvergentModularForms}).

It remains to check condition (3): {\em if $A \ci A'$ is an inclusion in $\fr{AR}_L$, $D_A \in \fr X_D(A)$ and $D_{A'} \in \fr X_{D}^{0,\Phi=1}(A')$ then $D_A \in \fr X_D^{0,\Phi=1}(A)$}. We start with the assumptions of condition (3) and consider the exact sequence $0 \goto F(D_A) \goto F(D_{A'}) \goto F(D_A \tensor_A A'/A)$. Computing lengths,
\begin{align}\label{eqn:first-inequality-blah}
\ell_A(F(D_{A'})) - (K_0:\Q_p)\ell_A(A'/A) &\leq \ell_A(F(D_{A'})) - \ell_A(F(D_A\tensor_A A'/A))\\
& \leq \ell_A(F(D_A))\nonumber
\end{align}
Here we used Lemma \ref{lemma:length-lemma} for the first inequality. Since $D_{A'} \in \fr X_{D}^{0,\Phi=1}(A')$ we have that $F(D_{A'})$ is free of rank one over $K_0\tensor_{\Q_p} A'$ and thus \eqref{eqn:first-inequality-blah} implies $(K_0:\Q_p)\ell_A(A) \leq \ell_A(F(D_A))$. The reverse inequality is also try by Lemma \ref{lemma:length-lemma}. So, we have shown that $\ell_A(F(D_A)) = (K_0:\Q_p)\ell_A(A) = \ell_A(K_0\tensor_{\Q_p} A)$. But then $D_A \in \fr X_D^{0,\Phi=1}(A)$ by Lemma \ref{lemma:equivalent-conditions}.
\end{proof}

\subsection{Weakly-refined deformations}\label{subsec:weakly-refined-deformations}

We temporarily fix a uniformizer $\varpi_K \in K^\times$. We also assume $D$ is a crystalline $(\varphi,\Gamma_K)$-module over $\cal R_{K,L}$ and that for each embedding $\tau: K \inject L$ there is a unique least $\tau$-Hodge--Tate weight $h_{1,\tau}$. 

If $A \in \fr{AR}_L$ and $D_A$ is a deformation of $D$ to $A$ we write $\eta_{1,\tau}$  for the Hodge--Sen--Tate weight of $D_A$ satisfying $\eta_{1,\tau} \congruent h_{1,\tau} \bmod \ideal m_A$ (cf.\ Section \ref{subsec:hodge-tate-deformations}). Then, by Section \ref{subsec:algebraic-char-deformations} we can construct a character $\underline {\eta_1}_{\varpi_K}: K^\times \goto A^\times$ whose value on $\varpi_K$ is 1 and whose $\tau$-Hodge--Sen--Tate weight is $\eta_{1,\tau}$ for each embedding $\tau: K \inject L$. The twisting operation $D_A \mapsto D_A(\underline{\eta_1}_{\varpi_K})$ defines a natural transformation $\fr X_D \goto \fr X_{D(\underline{h_1}_{\varpi_K})}^0$. (Fixing $\varpi_K$ makes the twisting functorial.)

Continue with the assumptions of the previous paragraphs. If $\phi$ is a simple crystalline eigenvalue for $D$ then $\Phi := \phi\prod_{\tau} \tau(\varpi_K)^{-h_{1,\tau}}$ is a simple eigenvalue for $D(\underline{h_1}_{\varpi_K})$. Note that the least Hodge--Tate weight of $D(\underline{h_1}_{\varpi_K})$ is zero at each embedding $\tau$. We now define a functor $\fr X_D^{\phi}$ as a fibered product
\begin{equation*}
\xymatrix{
\fr X_D^{\phi} \ar[d] \ar[r] & \fr X_D \ar[d]^{D_A\mapsto D_A(\underline{\eta_1}_{\varpi_K})}\\
\fr X_{D(\underline{h_1}_{\varpi_K})}^{0,\Phi} \ar[r] & \fr X_{D(\underline{h_1}_{\varpi_K})}^0
}
\end{equation*}
where the bottom arrow is the natural inclusion.

Since being relatively representable is stable under base change, Proposition \ref{prop:kisin-rel-rep} implies that $\fr X_D^{\phi} \ci \fr X_D$ is relatively representable.  Furthermore, $\fr X_D^{\phi}$ is independent of $\varpi_K$: the twisting $D_A \mapsto D_A(\underline{\eta_1}_{\varpi_K})$ and $\Phi$ depend on the choice but the dependencies cancel.\footnote{As we mentioned at the end of Section \ref{subsec:algebraic-char-deformations}, if $\varpi_K$ and $\varpi_K'$ are two different uniformizers then $\underline{\eta_1}_{\varpi_K}$ and $\underline{\eta_1}_{\varpi_K'}$ differ by a crystalline character. Moreover, the crystalline eigenvalue exactly matches the difference between $\Phi$ and the corresponding $\Phi'$.}

For the rest of this subsection, we assume that $D$ is a crystalline $(\varphi,\Gamma_K)$-module over $\cal R_{K,L}$ with distinct Hodge--Tate weights and distinct crystalline eigenvalues. We also equip $D$ with a triangulation $P_{\bullet}$ whose parameter we write $(\delta_1,\dotsc,\delta_n)$. We denote by $(\phi_1,\dotsc,\phi_n)$ the corresponding list of crystalline eigenvalues given by \eqref{eqn:ref-tri}.
%
%\begin{equation*}
%\phi_i := \delta_i(\varpi_K)\cdot \prod_{\tau: K \goto L} \tau(\varpi_K)^{\wt_{\tau}(\delta_i)},
%\end{equation*}
%for some/any choice of uniformizer $\varpi_K$ of $K$.

If $1 \leq j \leq n$ then consider the $j$th exterior power $\wedge^j D$ of $D$. If $D_A$ is a deformation of $D$ to $A$ then $\wedge^j D_A$ is a $(\varphi,\Gamma_K)$-module over $\cal R_{K,A}$ which deforms $\wedge^j D$. If the $\tau$-Hodge--Tate weights of $D$ are $h_{1,\tau} < h_{2,\tau} < \dotsb < h_{n,\tau}$ then the $\tau$-Hodge--Tate weight $h_{1,\tau} + \dotsb + h_{j,\tau}$ for $\wedge^j D$ is the unique least $\tau$-Hodge--Tate weight for each $\tau$. Moreover, if $1 \leq j \leq n$ then $\phi_1\dotsb \phi_j$ is an eigenvalue for $\varphi^{f_K}$ acting on $D_{\crys}(\wedge^j D)$.

\begin{definition}\label{defi:regular-generic}
A triangulation $P_{\bullet}$ is called regular if $\phi_1\dotsb \phi_j$ is a simple crystalline eigenvalue in $D_{\cris}(\wedge^j D)$ for $1 \leq j \leq n$. We say $P_{\bullet}$ is regular generic if $P_{\bullet}$ is regular and in addition $H^2(\delta_i\delta_j^{-1}) = (0)$ for all $1\leq i, j\leq n$.
\end{definition}

\begin{remark}
If $P_{\bullet}$ is a regular triangulation as in Definition \ref{defi:regular-generic} then the crystalline eigenvalues of $D$ are necessarily distinct. Since $H^0(\delta_i\delta_j^{-1}) \subset D_{\crys}(\mathcal R_{K,L}(\delta_i\delta_j^{-1}))^{\varphi=1}$, it follows that $H^0(\delta_i\delta_j^{-1}) = (0)$ when $i \neq j$.
\end{remark}

\begin{definition}\label{def:weakly-refined}
Let $D$ be a crystalline $(\varphi,\Gamma_K)$-module with distinct Hodge--Tate weights. If $P_{\bullet}$ is a regular triangulation then the weakly-refined deformation functor with respect to $P_{\bullet}$ is the fibered product $\fr X_{D,P_{\bullet}}^{\Ref}$ given by
\begin{equation*}
\xymatrix{
\fr X_{D,P_{\bullet}}^{\Ref} \ar[r] \ar[d] & \ar[d]^-{D_A \mapsto (\wedge^j D_A)_j} \fr X_D\\
\prod_{j=1}^n \fr X_{\wedge^j D}^{\phi_1\dotsb \phi_j} \ar[r] & \prod_{j=1}^n \fr X_{\wedge^j D} 
}
\end{equation*}
where the bottom horizontal arrow is the natural inclusion.
\end{definition}

When $P_{\bullet}$ is a regular triangulation, Proposition \ref{prop:kisin-rel-rep} implies that each arrow $\fr X_{\wedge^j D}^{\phi_1\dotsb \phi_j} \goto \fr X_{\wedge^j D}$ is relatively representable. Thus, so is $\fr X_{D,P_{\bullet}}^{\Ref} \ci \fr X_D$.

\subsection{A constant weight tangent space}

Our goal for the rest of Section \ref{section:weakly-refined} is to compute the Zariski tangent spaces $\fr t_{D,P_{\bullet}}^{\Ref}$ to functors of the form $\fr X_{D,P_{\bullet}}^{\Ref}$. This particular subsection concerns the constant weight subfunctor $\fr X_{D,P_{\bullet}}^{\Ref, \HT} := \fr X_{D,P_{\bullet}}^{\Ref} \intersect \fr X_D^{\HT}$, which admits a simpler description: if $D_A$ is a Hodge--Tate deformation of $D$ then the twisting character(s) denoted $\underline{\eta_1}_{\varpi_K}$ are crystalline, and thus the points of $\fr X_D^{\Ref,\HT}$ are given by
\begin{equation}\label{eqn:HT-defs}
\fr X_D^{\Ref,\HT}(A) = \set{\text{\parbox{12cm}{\centering $D_A \in \fr X_D^{\HT}(A)$ such that for some collection $\phi_{j,A} \congruent \phi_j \bmod \ideal m_A$, $D_{\crys}(\wedge^j D_A)^{\varphi^{f_K} = {\phi_{1,A}\dotsb \phi_{j,A}}}$ is free of rank one over $K_0\tensor_{\Q_p} A$ for each $1 \leq j \leq n$.} }}.
\end{equation}

From the description \eqref{eqn:HT-defs} it is clear that $\fr X_{D,P_{\bullet}}^{\Ref,\HT}$ contains the crystalline deformation functor $\fr X_{D,f}$ for any $P_{\bullet}$. In Theorem \ref{theorem:HT-ref-bound} below we compute a bound for the dimension of quotient spaces of the form $\fr t_{D,P_{\bullet}}^{\Ref,\HT}/\fr t_{D,f}$. Let us preview our theorem by remarking on the non-critical case. 

\begin{remark}
Suppose that $D$ is a crystalline $(\varphi,\Gamma_K)$-module equipped with a regular generic triangulation $P_{\bullet}$ which is {\em non-critical}. Then, every weakly-refined deformation is trianguline \cite[Theorem 2.5.6]{BellaicheChenevier-Book} and every trianguline deformation of constant Hodge--Tate weight is crystalline \cite[Theorem 2.5.1]{BellaicheChenevier-Book}. Thus $\fr t_{D,P_{\bullet}}^{\Ref,\HT}/\fr t_{D,f} = (0)$ in this case.
\end{remark}
The next three lemmas expand on the previous remark for triangulations that are possibly critical, and our proof of Theorem \ref{theorem:HT-ref-bound} gives a new proof of the remark.

\begin{lemma}\label{lemma:diagram-chase}
Suppose that $D$ is a $(\varphi,\Gamma_K)$-module over $\cal R_{K,L}$ and $\iota: P_1 \inject D$ is a saturated $(\varphi,\Gamma_K)$-submodule. Let $Q_1 := \coker(\iota)$. Consider the canonical morphisms
\begin{equation*}
\xymatrix{
\Ext^1_{(\varphi,\Gamma_K)}(Q_1,D) \ar[d]_-{h} \ar[r]^-{f} & \Ext^1_{(\varphi,\Gamma_K)}(D,D)\\
\Ext^1_{(\varphi,\Gamma_K)}(Q_1,Q_1).
}
\end{equation*}
\begin{enumerate}
\item If $\twid D = f(X)$ for some $X \in \Ext^1_{(\varphi,\Gamma_K)}(Q_1,D)$ then there exists a saturated embedding $P_1\tensor_{\cal R_{K,L}} \cal R_{K,L[\varepsilon]} \overset{\twid\iota}{\inject} \twid D$ such that $\twid \iota \congruent \iota \bmod \varepsilon$.
\item If $\dim_L \Hom_{(\varphi,\Gamma_K)}(P_1,D) = 1$ and $\twid D = f(X)$ for some $X \in \Ext^1_{(\varphi,\Gamma_K)}(Q_1,D)$ then $\twid \iota$ in part (a) is unique up to a scalar $1 + \mu\varepsilon \in L[\varepsilon]^\times$ and  $h(X) = \coker(\twid \iota)$.
\end{enumerate}
\end{lemma}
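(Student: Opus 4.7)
The plan is to realize the extension data geometrically and then perform an explicit diagram chase. Represent the class $X \in \Ext^1_{(\varphi,\Gamma_K)}(Q_1,D)$ by a short exact sequence $0 \to D \overset{\iota_E}{\to} E \overset{\pi}{\to} Q_1 \to 0$. Writing $q: D \to Q_1$ for the quotient by $\iota(P_1)$, the class $f(X)$ corresponds to the pullback
\begin{equation*}
\twid D = E \times_{Q_1} D = \set{(e,d) \in E \oplus D \st \pi(e) = q(d)},
\end{equation*}
where the $L[\varepsilon]$-structure is such that $\varepsilon\cdot(e,d) = (\iota_E(d),0)$, and the reduction $\twid D \to D$ is $(e,d)\mapsto d$. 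Dually, $h(X)$ corresponds to the pushout $0 \to Q_1 \to E/\iota_E\iota(P_1) \to Q_1 \to 0$, which is well-defined because $q \circ \iota = 0$.

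For part (a), I would define the lift $\twid \iota: P_1 \tensor_{\cal R_{K,L}} \cal R_{K,L[\varepsilon]} \to \twid D$ to be the unique $\cal R_{K,L[\varepsilon]}$-linear, $(\varphi,\Gamma_K)$-equivariant extension of the map $p \mapsto (0,\iota(p))$ (which is well-defined since $\pi(0)=0=q(\iota(p))$). Explicitly, $\twid\iota(p_0 + p_1\varepsilon) = (\iota_E\iota(p_1), \iota(p_0))$. Injectivity and the equality $\twid \iota \bmod \varepsilon = \iota$ are immediate. The cokernel $\twid D/\twid\iota(P_1[\varepsilon])$ is a $(\varphi,\Gamma_K)$-module over $\cal R_{K,L[\varepsilon]}$ deforming $Q_1 = D/P_1$, so it is free by a straightforward generalization of Lemma \ref{lemma:saturated-deformation} (reducing to the rank-one case via determinants if desired), showing that $\twid\iota$ is saturated.

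For part (b), suppose $\twid\iota'$ is a second saturated embedding lifting $\iota$. The difference $\twid\iota' - \twid\iota$ is an $\cal R_{K,L[\varepsilon]}$-linear, $(\varphi,\Gamma_K)$-equivariant map which vanishes modulo $\varepsilon$, hence it factors as $\varepsilon \cdot \delta$ for some $(\varphi,\Gamma_K)$-equivariant $\cal R_{K,L}$-linear map $\delta: P_1 \to D$. By the hypothesis $\dim_L \Hom_{(\varphi,\Gamma_K)}(P_1, D)=1$, we must have $\delta = \mu \iota$ for some $\mu \in L$, and so $\twid\iota' = \twid\iota \circ (1 + \mu\varepsilon)$, giving uniqueness up to the scalar $1+\mu\varepsilon$. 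To identify $h(X) = \coker(\twid\iota)$, I would consider the surjection $\phi: \twid D \to E/\iota_E\iota(P_1)$ sending $(e,d) \mapsto \bar e$; a direct check shows $\ker \phi = \twid\iota(P_1[\varepsilon])$ (if $\bar e = 0$ then $e=\iota_E\iota(p)$, forcing $q(d)=\pi(e)=0$, so $d=\iota(p')$), and this identification is compatible with the extension structures by construction.

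The main obstacle is the saturation step in part (a): one needs to check that the cokernel $\twid D/\twid\iota(P_1[\varepsilon])$ is actually free over $\cal R_{K,L[\varepsilon]}$, not merely torsion-free modulo $\varepsilon$. Either one extends Lemma \ref{lemma:saturated-deformation} to higher rank submodules by induction on the rank, or one reduces to the rank-one case by passing to top exterior powers, where freeness of the deformation of the determinant line follows from Lemma \ref{lemma:saturated-deformation} directly. Beyond that, the argument is a formal diagram chase.
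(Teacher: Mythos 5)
Your construction of $\twid\iota$, the well-definedness check, and the identifications of both $f(X)$ (as the fiber product $E\times_{Q_1}D$) and $h(X)$ (as $E/\iota_E\iota(P_1)$) match the paper's proof exactly, including the explicit formula $\twid\iota(p_0 + p_1\varepsilon) = (\iota_E\iota(p_1),\iota(p_0))$ and the surjection $\twid D \to E/\iota_E\iota(P_1)$ with kernel $\twid\iota(P_1[\varepsilon])$. For part (b), the paper parametrizes an arbitrary lift directly as $\twid\iota(p) = (j_X\beta(p),\iota(p))$ and applies the one-dimensionality of $\Hom_{(\varphi,\Gamma_K)}(P_1,D)$ to $\beta$, whereas you take the difference $\twid\iota'-\twid\iota$ and extract $\delta$ from it; these are trivially equivalent. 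The one place you are more careful than the paper is the saturation step: the paper cites Lemma~\ref{lemma:saturated-deformation} directly, which as stated requires $P_1$ to have rank one (true in every application the paper makes of Lemma~\ref{lemma:diagram-chase}, since $P_1 = \cal R_{K,L}(\delta_1)$), while you flag the gap for higher rank and sketch the standard fixes; this is a legitimate observation about the generality of the statement versus what the cited lemma literally covers.
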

\begin{proof}
We first recall the definitions of $f$ and $h$. Let
\begin{equation*}
0 \goto D \overto{j_X} X \overto{\pi_X} Q_1 \goto 0
\end{equation*}
be an extension in $\Ext^1_{(\varphi,\Gamma_K)}(Q_1,D)$. Then, the embedding $\iota: P_1\inject D$ induces an embedding $j_X\compose \iota: P_1 \inject X$ and $h(X) := \coker(j_X\compose \iota)$. If $\theta: D \goto Q_1$ is the quotient map then $f(X)$ is defined by
\begin{equation*}
f(X) := \ker\left(X\oplus D \overset{(\pi_X,-\theta)}{\longrightarrow} Q_1\right) = \set{(x,d) \in X\oplus D \st \pi_X(x) = \theta(d)}.
\end{equation*}
The $(\varphi,\Gamma_K)$-module structure on $f(X)$ is coordinate-wise, and the structure of multiplication by $\varepsilon$ on $f(X)$ is given by $\varepsilon\cdot (x,d) = (j_X(d), 0)$.

Now let's prove (a). Let $X \in \Ext^1_{(\varphi,\Gamma_K)}(Q_1,D)$ and $\twid D = f(X)$. We have a canonical $(\varphi,\Gamma_K)$-equivariant  embedding $P_1 \inject D \inject X\oplus D$ given by $\twid \iota_0(p) = (0,\iota(p))$. Since $\pi_X(0) = 0 = \theta\iota(p)$ we see that $\twid\iota_0(P_1) \ci \twid D$. We define $\twid \iota$ on $P_1\tensor_{\cal R_{K,L}} \cal R_{K,L[\varepsilon]}$ by $\twid \iota(p) = \twid \iota_0(p)$ and $\twid \iota(\varepsilon p) = (j_X(\iota(p)),0)$ for each $p \in P_1$. Note that this is well-defined because 
\begin{equation*}
\varepsilon \twid \iota(p) = \varepsilon\cdot (0,\iota(p)) = (j_X(\iota(p)),0) = \twid\iota(\varepsilon p).
\end{equation*}
Moreover, $\twid \iota$ is injective because $\im(\twid \iota_0) \intersect \varepsilon \twid D = (0)$. It is clear that $\twid \iota \bmod \varepsilon = \iota$ and $\im(\twid \iota)$ is saturated by Lemma \ref{lemma:saturated-deformation}.

We now prove part (b). A general $\twid \iota$ is determined by $\twid \iota(p)$ with $p \in P_1$ by $L[\varepsilon]$-linearity. Since $\twid \iota \congruent \iota \bmod \varepsilon$ we may write $\twid \iota(p) = (j_X(\beta(p)),\iota(p))$ where $\beta: P_1 \goto D$ is a $(\varphi,\Gamma_K)$-equivariant map. By assumption, $\beta$ is of the form $\mu \cdot \iota$ for some $\mu \in L$ and so $\twid \iota(p) = (1 + \mu\varepsilon)\cdot \twid \iota_0(p)$. This shows that $\twid \iota$ is unique up to a scalar $1 + \mu\varepsilon \in L[\varepsilon]^\times$.

It remain to show $h(X) = \coker(\twid \iota)$. By the previous paragraph, we may assume $\twid \iota$ is explicitly given as in the proof of (a). First note the map  $\twid D=f(X) \goto X$ given by $(x,d)\mapsto x$ is surjective. Furthermore, if $p,q \in P_1$ then $\twid\iota(p + q\varepsilon) = (j_X\iota(q),\iota(p)) \mapsto j_X\iota(q) \in (j_X\compose \iota)(P_1)$. Thus we have a well-defined $(\varphi,\Gamma_K)$-equivariant surjection $\alpha:\twid D/\im(\twid \iota) \surject X/(j_X\compose \iota)(P_1)$. Suppose that $\twid d \in \ker(\alpha)$ and write $\twid d = (j_X(\iota(p)),d)$ for some $p \in P_1$ and $d \in D$. Since $\theta(d) = \pi_X(j_X(\iota(p))) = 0$ we may write $d = \iota(p')$ for some $p' \in P_1$. But then $\twid d = \twid \iota(p' + \varepsilon p) \in \im(\twid \iota)$. This shows $\alpha$ is injective and so $h(X) = X/(j_X\compose \iota)(P_1) = \coker(\twid \iota)$.
\end{proof}

\begin{lemma}\label{lemma:composition-zero}
Let $D$ be a crystalline $(\varphi,\Gamma_K)$-module over $\cal R_{K,L}$ and let $\cal R_{K,L}(\delta_1) \ci D$ be a saturated rank one $(\varphi,\Gamma_K)$-submodule. Let $\phi_1$ be the crystalline eigenvalue appearing in $D_{\cris}(\cal R_{K,L}(\delta_1))$, assume that $\phi_1$ is simple in $D_{\crys}(D)$ and let $h_1=(h_{1,\tau})$ be the Hodge--Tate weights of $\delta_1$. Let $\fr t_D^{\phi_1,h_1}$ be the Zariski tangent space to $\fr X_{D}^{\phi_1,h_1} := \fr X_D^{\phi_1}\intersect \bigintersect_\tau \fr X_{D}^{h_{1,\tau}}$ as in Sections \ref{subsec:hodge-tate-deformations} and \ref{subsec:weakly-refined-deformations}.

%equipped with a regular generic triangulation $P_{\bullet}$ and $P_1 = \cal R_{K,L}(\delta_1)$. Let $\fr t_D^{\Ref,\HT}$ be the tangent space to the weakly-refined constant weight deformations with respect to $P_{\bullet}$. 
\begin{enumerate}
\item The composition $\fr t_D^{\phi_1,h_1} \inject H^1(\ad D) \goto H^1_{/f}(\ad D) \goto H^1_{/f}(D(\delta_1^{-1}))$ is zero. 
\item If $Q_1 = \coker(\cal R_{K,L}(\delta_1) \goto D)$ and $H^2(D \tensor Q_1^\dual) = (0)$ then there is a natural embedding $\fr t_{D}^{\phi_1,h_1}/\fr t_{D,f} \inject H^1_{/f}(D\tensor Q_1^\dual)$.
%
%i.e. every class $[\twid D] \in \fr t_{D}^{\phi_1,h_1}/\fr t_{D,f}$ has a representative $\twid D \in \fr t_{D}^{\phi_1,h_1}$ such that $\twid D$ becomes split as a $(\varphi, \Gamma_K)$-module over $\cal R_{K,L}$ under the natural map $H^1(\ad D) \goto H^1(D(\delta_1^{-1}))$.
\end{enumerate}
\end{lemma}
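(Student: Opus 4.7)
The plan is to prove (a) by producing, for every $\twid D \in \fr t_D^{\phi_1,h_1}$, a saturated deformation $\twid \iota: \twid P_1 \inject \twid D$ of $\iota: P_1 \inject D$ and by showing that $\twid P_1$ is crystalline; then (b) will follow by a diagram chase using Lemma \ref{lemma:exactness-selmer-groups}.

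First I would construct the lift following the strategy of \cite[Proposition 2.4.10]{BellaicheChenevier-Book}. Since $\twid D \in \fr X_D^{\phi_1}(L[\varepsilon])$, Lemma \ref{lemma:equivalent-conditions} supplies a lift $\twid \Phi_1 \in L[\varepsilon]^\times$ of $\phi_1$ and a free rank one $K_0 \tensor_{\Q_p} L[\varepsilon]$-module $D_{\crys}^+(\twid D)^{\varphi^{f_K}=\twid \Phi_1}$ deforming the corresponding line for $D$. Combining this $\varphi$-eigenline with the Hodge--Sen--Tate weights $\eta_{1,\tau}(\twid D) = h_{1,\tau}$ (which are constant by the assumption $\twid D \in \fr X_D^{h_{1,\tau}}$) pins down a character $\twid \delta_1: K^\times \goto L[\varepsilon]^\times$ lifting $\delta_1$ together with a $(\varphi,\Gamma_K)$-equivariant map $\twid \iota: \cal R_{K,L[\varepsilon]}(\twid \delta_1) \goto \twid D$ lifting $\iota$; saturation then follows from Lemma \ref{lemma:saturated-deformation}. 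Constructing $\twid \iota$ is the main technical obstacle: packaging the $\varphi^{f_K}$-eigenvector together with the Sen data into a $\Gamma_K$-equivariant character and a $(\varphi,\Gamma_K)$-equivariant embedding requires the kind of bookkeeping familiar from the Galois-representation setting of \cite{Kisin-OverconvergentModularForms}, translated to $(\varphi,\Gamma_K)$-modules.

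Next I would verify that $\twid P_1 := \im(\twid \iota)$ is crystalline by showing its $\tau$-Hodge--Sen--Tate weight $\eta'_{1,\tau}$ equals the integer $h_{1,\tau}$. Since $\twid P_1 \inject \twid D$ is a $(\varphi,\Gamma_K)$-equivariant embedding, the $\tau$-Sen polynomial $X - \eta'_{1,\tau}$ of $\twid P_1$ divides the $\tau$-Sen polynomial of $\twid D$, which factors uniquely as $\prod_i (X - \eta_{i,\tau}(\twid D))$ by Hensel's lemma (Section \ref{subsec:hodge-tate-deformations}). The only factor reducing to $X - h_{1,\tau}$ is $X - \eta_{1,\tau}(\twid D) = X - h_{1,\tau}$, so $\eta'_{1,\tau} = h_{1,\tau}$. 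Thus $\twid \delta_1$ is algebraic on $\cal O_K^\times$, making $\twid P_1$ crystalline. Now the commutative diagram
\begin{equation*}
\xymatrix{
0 \ar[r] & P_1 \ar[r] \ar[d]_{\iota} & \twid P_1 \ar[r] \ar[d] & P_1 \ar[r] \ar[d]^{\iota} & 0\\
0 \ar[r] & D \ar[r] & \twid D \ar[r] & D \ar[r] & 0
}
\end{equation*}
identifies, inside $\Ext^1_{(\varphi,\Gamma_K)}(P_1, D)$, the pullback of $[\twid D]$ along $\iota$ with the pushforward of $[\twid P_1]$ along $\iota$. Since $\twid P_1$ is crystalline and pushforward by $\iota$ carries the crystalline subspace to itself, this common class is crystalline; under $\Ext^1(P_1, D) \simeq H^1(D(\delta_1^{-1}))$ the composition $H^1(\ad D) \goto H^1(D(\delta_1^{-1}))$ in (a) is exactly this pullback, so the image of $[\twid D]$ lies in $H^1_f$, proving (a).

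For (b), tensoring $0 \goto P_1 \goto D \goto Q_1 \goto 0$ with $D^\dual$ gives the short exact sequence $0 \goto D\tensor Q_1^\dual \goto \ad D \goto D(\delta_1^{-1}) \goto 0$ of crystalline $(\varphi,\Gamma_K)$-modules. Lemma \ref{lemma:exactness-selmer-groups} applied to this sequence, using $H^2(D \tensor Q_1^\dual) = 0$, yields a short exact sequence $0 \goto H^1_{/f}(D\tensor Q_1^\dual) \goto H^1_{/f}(\ad D) \goto H^1_{/f}(D(\delta_1^{-1})) \goto 0$. Since crystalline deformations satisfy the $\phi_1$ and constant-weight conditions, one has $\fr t_{D,f} = \fr t_D^{\phi_1,h_1} \intersect H^1_f(\ad D)$ inside $H^1(\ad D) \simeq \fr t_D$, and thus $\fr t_D^{\phi_1,h_1}/\fr t_{D,f}$ injects into $H^1_{/f}(\ad D)$. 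By (a) this image lies in the kernel $H^1_{/f}(D\tensor Q_1^\dual)$, giving the embedding claimed in (b).
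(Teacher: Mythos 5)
Your plan for part (a) is to re-derive, rather than cite, the content of \cite[Lemma 7.2]{Bergdall-ParabolineVariation}, which the paper itself invokes as a black box precisely at this point. The paper's proof of (a) is short: it records (just as you do) that the constant-weight hypothesis $\twid D \in \fr X_D^{h_{1,\tau}}$ together with $\twid D \in \fr X_D^{\phi_1}$ yields a free rank-one crystalline eigenspace $D_{\cris}(\twid D)^{\varphi^{f_K}=\twid\phi_1}$, and then appeals to that cited lemma to conclude that the image of $[\twid D]$ in $\Ext^1_{(\varphi,\Gamma_K)}(\cal R_{K,L}(\delta_1),D)$ is crystalline. You instead try to prove this conclusion directly by constructing a saturated crystalline lift $\twid\iota : \cal R_{K,L[\varepsilon]}(\twid\delta_1)\inject\twid D$ of $\iota$ and running a pushforward/pullback diagram chase. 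The diagram chase, the Sen-polynomial divisibility argument showing $\twid P_1$ is crystalline, the use of Lemma \ref{lemma:saturated-deformation} for saturation, and your part (b) (which coincides with the paper's: apply Lemma \ref{lemma:exactness-selmer-groups} to $0\goto D\tensor Q_1^\dual\goto\ad D\goto D(\delta_1^{-1})\goto 0$ and combine with (a)) are all sound.

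However, there is a genuine gap at the step you yourself flag as ``the main technical obstacle.'' Passing from the free rank-one $K_0\tensor_{\Q_p}L[\varepsilon]$-module of crystalline periods to a $(\varphi,\Gamma_K)$-equivariant saturated embedding $\twid\iota$ lifting $\iota$ is exactly the content that \cite[Lemma 7.2]{Bergdall-ParabolineVariation} supplies, and it is not a routine bookkeeping exercise when $K\neq\Q_p$: one must verify that the $\Gamma_K$-invariant $\varphi^{f_K}$-eigenvector in the twist $\twid D(\underline{h_1}_{\varpi_K})$ generates a free rank-one $\cal R_{K,L[\varepsilon]}$-submodule, that this submodule reduces mod $\varepsilon$ to the twist of $P_1$ (which uses that the $\Gamma_K$-invariants of a crystalline rank-one module with trivial Hodge--Tate weight generate the whole module), and that the resulting character is of the claimed form. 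You gesture at \cite[Proposition 2.4.10]{BellaicheChenevier-Book} and \cite{Kisin-OverconvergentModularForms} for the ``strategy,'' but those treat the case $K=\Q_p$ and the Galois-representation setting respectively, so the general-$K$ $(\varphi,\Gamma_K)$-module statement still needs to be checked. Since (a) rests entirely on this construction, the proof is incomplete unless you either carry out that verification or cite the existing reference, as the paper does.
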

\begin{proof}
Suppose that $\twid D \in \fr t_{D}^{\phi_1,h_1}$. The least Hodge--Tate weight $h_{1,\tau}$ is constant in $\twid D$ for each embedding $\tau:K \inject L$. Thus there exists a $\twid \phi_1 \in L[\varepsilon]^\times$ deforming $\phi_1$ such that $D_{\cris}(\twid D)^{\varphi^{f_K} = \twid \phi_1}$ is free of rank one over $K_0\tensor_{\Q_p} L[\varepsilon]$ (compare with \eqref{eqn:HT-defs}). By \cite[Lemma 7.2]{Bergdall-ParabolineVariation}, the image of $\twid D$ under the natural map $\Ext^1_{(\varphi,\Gamma_K)}(D,D) \goto \Ext^1_{(\varphi,\Gamma_K)}(\cal R_{K,L}(\delta_1),D)$ lands inside the subspace $\Ext^1_f(\cal R_{K,L}(\delta_1),D)$ of crystalline extensions (the reference is valid because we have assumed that $\phi_1$ is a simple eigenvalue in $D_{\crys}(D)$). This proves part (a).

To prove part (b), we apply Lemma \ref{lemma:exactness-selmer-groups} to the short exact sequence
\begin{equation}\label{eqn:revising-seq}
0 \goto D \tensor Q_1^\dual \goto \ad D \goto D(\delta_1^{-1}) \goto 0
\end{equation}
of $(\varphi,\Gamma_K)$-modules. The hypotheses of Lemma \ref{lemma:exactness-selmer-groups} are satisfied because $D$, and thus each term in \eqref{eqn:revising-seq}, is crystalline and $H^2(D\tensor Q_1^\dual) = (0)$ by assumption in this lemma. We conclude from Lemma \ref{lemma:exactness-selmer-groups} that there is a natural short exact sequence
 \begin{equation}\label{eqn:application-lemma}
 0 \goto H^1_{/f}(D \tensor Q_1^\dual) \goto H^1_{/f}(\ad D) \goto H^1_{/f}(D(\delta_1^{-1})) \goto 0.
 \end{equation}
The subspace $\fr t_{D}^{\phi_1,h_1}/\fr t_{D,f} \ci H^1_{/f}(\ad D)$ maps to zero in the final term of \eqref{eqn:application-lemma} by part (a) of this lemma. This proves (b).
\end{proof}

\begin{lemma}\label{lemma:refined-hodge-tate-stable}
Let $D$ be a crystalline $(\varphi,\Gamma_K)$-module equipped with a regular generic triangulation $P_{\bullet}$ and let $\cal R_{K,L}(\delta_1) = P_1$. Set $Q_1 = \coker(\cal R_{K,L}(\delta_1) \goto D)$ and let $P'_{\bullet}$ be the induced triangulation on $Q_1$. The composition 
\begin{equation*}
\fr t_{D,P_{\bullet}}^{\Ref,\HT}/\fr t_{D,f} \inject H^1_{/f}(D\tensor Q_1^\dual) \goto H^1_{/f}(\ad Q_1)
\end{equation*}
has image inside the subspace $\fr t_{Q_1,P_{\bullet}'}^{\Ref,\HT}/\fr t_{Q_1,f} \ci H^1_{/f}(\ad Q_1)$.
\end{lemma}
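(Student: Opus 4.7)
My plan is to unwind the composition concretely. Given $\twid D \in \fr t_{D,P_{\bullet}}^{\Ref,\HT}$, I will produce a canonical rank-one saturated sub-$(\varphi,\Gamma_K)$-module $\twid P_1 \ci \twid D$ lifting $P_1 = \cal R_{K,L}(\delta_1)$, show that its cokernel $\twid Q_1 := \twid D/\twid P_1$ is the image in $H^1(\ad Q_1)$, and then verify that $\twid Q_1 \in \fr X_{Q_1,P'_{\bullet}}^{\Ref,\HT}(L[\varepsilon])$. This will place the image (modulo $H^1_f$) in $\fr t_{Q_1,P'_{\bullet}}^{\Ref,\HT}/\fr t_{Q_1,f}$.

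For the construction of $\twid P_1$: the weakly-refined condition at $j=1$ gives a free rank-one $K_0\tensor_{\Q_p} L[\varepsilon]$-submodule of $D_{\cris}^+(\twid D)$ whose $\varphi^{f_K}$-eigenvalue lifts $\phi_1$; I take $\twid P_1$ to be its saturation in $\twid D$, a crystalline rank-one sub-$(\varphi,\Gamma_K)$-module of the form $\cal R_{K,L[\varepsilon]}(\tilde\delta_1)$. Since $h_{1,\tau}$ is the unique least Hodge--Tate weight of $D$ at each $\tau$ and $\twid D$ has constant weights, Hensel's lemma pins the Hodge--Sen--Tate weight of $\twid P_1$ at $\tau$ to be exactly $h_{1,\tau}$; additivity along $0 \to \twid P_1 \to \twid D \to \twid Q_1 \to 0$ then makes the weights of $\twid Q_1$ the constants $\set{h_{i,\tau}}_{i\geq 2,\tau}$, so $\twid Q_1 \in \fr X_{Q_1}^{\HT}(L[\varepsilon])$. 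The regular generic hypothesis gives $H^0(\delta_i\delta_1^{-1}) = (0)$ for $i \geq 2$ and hence $\dim_L\Hom_{(\varphi,\Gamma_K)}(P_1,D) = 1$; Lemma \ref{lemma:diagram-chase}(b) then identifies the image of $\twid D$ under the composition with $h(X) = \coker(\twid\iota) = \twid Q_1$.

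The main remaining step is the weakly-refined condition, namely $\wedge^j\twid Q_1 \in \fr X_{\wedge^j Q_1}^{\phi_2\dotsb\phi_{j+1}}(L[\varepsilon])$ for $1 \leq j \leq n-1$. For this I will use the exterior-power filtration
\begin{equation*}
0 \goto \twid P_1\tensor\wedge^j\twid Q_1 \goto \wedge^{j+1}\twid D \goto \wedge^{j+1}\twid Q_1 \goto 0.
\end{equation*}
The hypothesis on $\twid D$ supplies a free rank-one $K_0\tensor L[\varepsilon]$-submodule $M \ci D_{\cris}^+(\wedge^{j+1}\twid D)$ with $\varphi^{f_K}$-eigenvalue lifting $\phi_1\phi_2\dotsb\phi_{j+1}$. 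Regularity makes the $\phi_i$ distinct; each crystalline eigenvalue of $\wedge^{j+1}Q_1$ is a product of $(j+1)$ elements of $\set{\phi_2,\dotsc,\phi_n}$ and therefore cannot equal $\phi_1\dotsb\phi_{j+1}$. Consequently the mod-$\ideal m$ eigenspace for the prescribed eigenvalue in $\wedge^{j+1} Q_1$ is $(0)$, and an $\ideal m$-adic induction lifts this to $\wedge^{j+1}\twid Q_1$; by left exactness of $D_{\cris}^+$ the submodule $M$ must come from $D_{\cris}^+(\twid P_1\tensor\wedge^j\twid Q_1)$. Untwisting via $\twid P_1\tensor\wedge^j\twid Q_1 \simeq \wedge^j\twid Q_1(\tilde\delta_1)$ transfers $M$ into a free rank-one piece of $D_{\cris}^+(\wedge^j\twid Q_1)$ with $\varphi^{f_K}$-eigenvalue lifting $\phi_2\dotsb\phi_{j+1}$.

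The main obstacle will be the eigenvalue-and-twist bookkeeping in this last step: to promote ``contains a free rank-one submodule'' to ``the relevant eigenspace itself is free of rank one,'' I plan to reduce to the normalized case $\Phi=1$ of Section \ref{subsec:deform-crys} by an unramified-plus-algebraic twist and invoke Lemma \ref{lemma:equivalent-conditions}, with the length bound of Lemma \ref{lemma:length-lemma} (whose ceiling $2(K_0\!:\!\Q_p)$ is already saturated on $\wedge^{j+1}\twid D$) forcing equality throughout. Everything else---Hodge--Tate constancy of $\twid Q_1$, identification of the image via Lemma \ref{lemma:diagram-chase}, and the inheritance of regular genericity from $P_{\bullet}$ to $P'_{\bullet}$ by the same disjointness-of-eigenvalues argument---is essentially formal.
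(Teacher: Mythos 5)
Your overall strategy is the same as the paper's: produce a rank-one saturated sub $\twid P_1\subset\twid D$ lifting $P_1$, identify $\alpha([\twid D])$ with the class of $\twid Q_1=\twid D/\twid P_1$ via Lemma~\ref{lemma:diagram-chase}, and then run the exterior-power induction to check the weakly-refined condition on $\twid Q_1$. The eigenvalue/exterior-power bookkeeping in your last two paragraphs is, in substance, the paper's argument (and you correctly note the key point that $\phi_1\cdots\phi_{j+1}$ does not occur in $\wedge^{j+1}Q_1$ by regularity), so I won't belabor it.

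However, there is a genuine gap at the step where you invoke Lemma~\ref{lemma:diagram-chase}(b) to conclude that the image of $\twid D$ under $\alpha$ equals $\coker(\twid\iota)$. Both parts of that lemma carry the hypothesis \emph{``$\twid D=f(X)$ for some $X\in\Ext^1_{(\varphi,\Gamma_K)}(Q_1,D)$,''} i.e. that the extension class of $\twid D$ actually lies in the image of $H^1(D\otimes Q_1^\vee)\to H^1(\operatorname{ad} D)$. For an arbitrary $\twid D\in\fr t_{D,P_\bullet}^{\Ref,\HT}$ this is false; what Lemma~\ref{lemma:composition-zero}(a) gives you is only that the class of $\twid D$ modulo $H^1_f(\operatorname{ad} D)$ lies in the image of $H^1_{/f}(D\otimes Q_1^\vee)$. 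To apply Lemma~\ref{lemma:diagram-chase} one must therefore first \emph{replace $\twid D$ by a representative of the same class in} $\fr t_{D,P_\bullet}^{\Ref,\HT}/\fr t_{D,f}$ that is literally in the image of $f$ — this is exactly what Lemma~\ref{lemma:composition-zero}(b), together with $H^2(D\otimes Q_1^\vee)=(0)$, furnishes (and note $\fr t_{D,f}\subset\fr t_{D,P_\bullet}^{\Ref,\HT}$, so the new representative is still admissible). Your proposal never performs this modification and never argues the alternative, namely that $\twid D/\twid P_1$ modulo $H^1_f(\operatorname{ad} Q_1)$ is unchanged when $\twid D$ is moved by an element of $\fr t_{D,f}$; without one of these, writing $h(X)$ and identifying it with $\twid Q_1$ is unjustified. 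A second, smaller point: you build $\twid P_1$ as the saturation of the crystalline eigenspace and call it $\cal R_{K,L[\varepsilon]}(\tilde\delta_1)$; to match the $\twid\iota$ of Lemma~\ref{lemma:diagram-chase} (which embeds the \emph{constant} module $\cal R_{K,L[\varepsilon]}(\delta_1)$) you also need $\tilde\delta_1=\delta_1$, which requires observing — as the paper does — that $\twid\phi_1=\phi_1$ because $\phi_1$ is simple and the constant embedding witnesses $\phi_1$ already occurring in $D_{\cris}(\twid D)$. You address the Hodge--Tate part of $\tilde\delta_1$ via Hensel but leave the unramified part unexamined.
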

\begin{remark}
There are minor clarifications needed for the lemma. First, the composition in Lemma \ref{lemma:refined-hodge-tate-stable} is well-defined by Lemma \ref{lemma:composition-zero}(b) and the inclusion $\fr t_{D,P_{\bullet}}^{\Ref,\HT} \ci \fr t_{D}^{\phi_1,h_1}$ (the notation as in Lemma \ref{lemma:composition-zero}). Second, the induced triangulation $P_{\bullet}'$ on $Q_1$ is regular generic because $P_{\bullet}$ itself is regular generic. This gives content to the conclusion of Lemma \ref{lemma:refined-hodge-tate-stable}.
\end{remark}
\begin{proof}[Proof of Lemma \ref{lemma:refined-hodge-tate-stable}]
First note that $H^2(\delta_j\delta_i^{-1}) = (0)$ for each $i,j$ because $P_{\bullet}$ is assumed to be regular generic. In particular, by the long exact sequence in cohomology we deduce that $H^2(D\tensor Q_1^\dual) = (0)$.

Write $\alpha$ for the composition $\fr t_{D,P_{\bullet}}^{\Ref,\HT}/\fr t_{D,f} \goto H^1_{/f}(D\tensor Q_1^\dual) \goto H^1_{/f}(\ad Q_1)$.  Now suppose that $[\twid D] \in \fr t_{D,P_{\bullet}}^{\Ref,\HT}/\fr t_{D,f}$. The tangent space $\fr t_{D,P_{\bullet}}^{\Ref,\HT}$ is contained in the tangent space $\fr t_D^{\phi_1,h_1}$ as in Lemma \ref{lemma:composition-zero}. Since $H^2(D\tensor Q_1^\dual) = (0)$, we may use Lemma \ref{lemma:composition-zero}(b) to choose a representative $\twid D \in \fr t_{D,P_{\bullet}}^{\Ref,\HT}$ such that $\twid D$ is in the image of $H^1(D\tensor Q^\dual_1) \goto H^1(\ad D)$.

By Lemma \ref{lemma:diagram-chase}(a) there exists a constant deformation $\twid\iota:\cal R_{K,L[\varepsilon]}(\delta_1) \inject \twid D$ which is a direct summand over $\cal R_{K,L[\varepsilon]}$. The space $H^0(D(\delta_1^{-1}))$ is one-dimensional since $P_{\bullet}$ is regular (see the remark following Definition \ref{defi:regular-generic}) and so Lemma \ref{lemma:diagram-chase}(b) implies that
\begin{equation*}
\alpha([\twid D]) = \coker(\twid \iota) \bmod H^1_f(\ad Q_1).
\end{equation*}
If $\twid Q_1 := \coker(\twid \iota) \in H^1(\ad Q_1)$ then $\twid Q_1$ has constant Hodge--Tate weights because $\twid D$ has constant Hodge--Tate weights. It suffices now to show that $\twid Q_1$ is a weakly-refined deformation with respect to the triangulation $P_{\bullet}'$.

Let $\twid \phi_1,\dotsc,\twid \phi_n$ be the elements of $L[\varepsilon]^\times$ witnessing $\twid D$ as being weakly-refined (see \eqref{eqn:HT-defs}). Note $\twid \phi_1$ must be equal to $\phi_1$ because $\phi_1$ is a simple eigenvalue in $D_{\cris}(D)$ and $\twid\iota$ witnesses $\phi_1$ appearing in $D_{\crys}(\twid D)$. Now consider $2 \leq m \leq n$ and the short exact sequence of $(\varphi,\Gamma_K)$-modules over $\cal R_{K,L[\varepsilon]}$
\begin{equation*}
0 \goto \wedge^{m-1}(\twid Q_1)\tensor_{\cal R_{K,L[\varepsilon]}}\cal R_{K,L[\varepsilon]}(\delta_1) \goto \wedge^m \twid D \goto \wedge^m \twid Q_1 \goto 0.
\end{equation*}
Recall that $D_{\cris}(-)$ is left exact. Since the eigenvalue $\phi_1\dotsc \phi_m$ appearing in $D_{\cris}(\wedge^m D)$ does not appear in $D_{\cris}(\wedge^m Q_1)$, by the regular condition on $P_{\bullet}$, we conclude that for $m=2,\dotsc,n$ there is a natural equality
\begin{equation*}
D_{\cris}\left(\wedge^{m-1}(\twid Q_1)\tensor_{\cal R_{K,L[\varepsilon]}} \cal R_{K,L[\varepsilon]} (\delta_1)\right)^{\varphi^{f_K} = \phi_1\twid \phi_2\dotsb \twid \phi_m}  \overto{\iso} D_{\cris}\left(\wedge^m (\twid D)\right)^{\varphi^{f_K} = \phi_1\twid \phi_2\dotsb \twid \phi_m}.
\end{equation*}
Twisting the left hand side by the constant crystalline  $(\varphi,\Gamma_K)$-module $\cal R_{K,L[\varepsilon]}(\delta_1^{-1})$ we have shown that $\twid Q_1 \in \fr t_{Q_1,P_{\bullet}'}^{\Ref}$.
\end{proof}
We now give an upper bound for the constant weight, weakly-refined, deformations, up to the crystalline deformations. We restate our hypotheses for clarity.
\begin{theorem}\label{theorem:HT-ref-bound}
If $D$ is a crystalline $(\varphi,\Gamma_K)$-module over $\cal R_{K,L}$ equipped with a regular generic triangulation $P_{\bullet}$ with parameter $(\delta_1,\dotsc,\delta_n)$ then 
\begin{equation}
\dim_L \fr t_{D,P_{\bullet}}^{\Ref,\HT}/\fr t_{D,f} \leq \sum_{1 \leq i < j \leq n} \dim_L H^1_{/f}(\delta_i\delta_j^{-1}).
\end{equation}
\end{theorem}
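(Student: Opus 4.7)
The plan is to induct on the rank $n$ of $D$, peeling off the bottom piece $P_1 = \cal R_{K,L}(\delta_1)$ at each step and letting $Q_1 = D/P_1$. The induction accounts separately for the pairs $(1,j)$ with $2 \leq j \leq n$ (contributing via the kernel of a certain Selmer-quotient map) and the pairs $(i,j)$ with $2 \leq i < j \leq n$ (contributing through the inductive hypothesis applied to $Q_1$).

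For the base case $n=1$, the right-hand sum is empty. Conversely, if $D_A$ is any rank-one weakly-refined deformation with constant integer Hodge--Tate weights, then $D_{\crys}(D_A)^{\varphi^{f_K} = \Phi_A}$ is free of rank one over $K_0 \tensor_{\Q_p} A$, so $D_{\crys}(D_A)$ already has its maximal possible rank and $D_A$ is crystalline. Hence $\fr t_{D,P_\bullet}^{\Ref,\HT} = \fr t_{D,f}$ and both sides vanish.

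For $n \geq 2$, let $P_\bullet'$ denote the induced (regular generic) triangulation on $Q_1$. The genericity vanishing $H^2(\delta_i \delta_j^{-1}) = 0$ implies $H^2(D \tensor Q_1^\dual) = 0$ by d\'evissage along the filtration induced from $P_\bullet$, so Lemma \ref{lemma:composition-zero}(b) provides an embedding $\fr t_{D,P_\bullet}^{\Ref,\HT}/\fr t_{D,f} \hookrightarrow H^1_{/f}(D \tensor Q_1^\dual)$. By Lemma \ref{lemma:refined-hodge-tate-stable}, its composition with the map $H^1_{/f}(D \tensor Q_1^\dual) \to H^1_{/f}(\ad Q_1)$ lands inside $\fr t_{Q_1, P_\bullet'}^{\Ref,\HT}/\fr t_{Q_1,f}$, whose dimension is bounded by induction by $\sum_{2 \leq i < j \leq n} \dim_L H^1_{/f}(\delta_i \delta_j^{-1})$. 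To bound the kernel, apply Lemma \ref{lemma:exactness-selmer-groups} to the short exact sequence $0 \to P_1 \tensor Q_1^\dual \to D \tensor Q_1^\dual \to \ad Q_1 \to 0$, whose hypothesis holds by another d\'evissage; this identifies the kernel with $H^1_{/f}(P_1 \tensor Q_1^\dual)$. Iterating Lemma \ref{lemma:exactness-selmer-groups} along the filtration of $P_1 \tensor Q_1^\dual$ with graded pieces $\cal R_{K,L}(\delta_1 \delta_j^{-1})$ for $j = 2, \dotsc, n$ (each with vanishing $H^2$) gives
$$\dim_L H^1_{/f}(P_1 \tensor Q_1^\dual) = \sum_{j=2}^n \dim_L H^1_{/f}(\delta_1 \delta_j^{-1}),$$
and adding the kernel and image bounds yields exactly $\sum_{1 \leq i < j \leq n} \dim_L H^1_{/f}(\delta_i \delta_j^{-1})$.

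The substantive work has already been done in Lemma \ref{lemma:refined-hodge-tate-stable}, which guarantees that the weakly-refined condition descends to $Q_1$ and thereby makes the induction possible; what remains above is essentially cohomological bookkeeping, powered throughout by the exactness of $H^1_{/f}$ on crystalline short exact sequences with $H^2$ of the subobject vanishing. The main potential pitfall is checking each $H^2$-vanishing hypothesis, but this is automatic from the genericity assumption $H^2(\delta_i\delta_j^{-1}) = 0$ via repeated long exact sequences.
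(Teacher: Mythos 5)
Your proof is correct and follows essentially the same route as the paper: an induction on rank peeling off $P_1 = \cal R_{K,L}(\delta_1)$, using Lemma \ref{lemma:composition-zero}(b) to embed $\fr t_{D,P_\bullet}^{\Ref,\HT}/\fr t_{D,f}$ into $H^1_{/f}(D\tensor Q_1^\dual)$, Lemma \ref{lemma:refined-hodge-tate-stable} to push the image into $\fr t_{Q_1,P_\bullet'}^{\Ref,\HT}/\fr t_{Q_1,f}$, and Lemma \ref{lemma:exactness-selmer-groups} to compute the kernel term $H^1_{/f}(Q_1^\dual(\delta_1)) = H^1_{/f}(P_1\tensor Q_1^\dual)$. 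Your rank-one base case spells out slightly more detail than the paper's one-line assertion, but the underlying argument is identical.
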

\begin{proof}
We argue by induction on $n$. If $n = 1$ then $\fr t_{D,P_{\bullet}}^{\Ref} = \fr t_{D}$ and $\fr t_{D,P_{\bullet}}^{\Ref,\HT} = \fr t_{D,f}$. Thus the bound is true (and an equality) in this case.

Now suppose that $n > 1$ and let $Q_1 = \coker(\cal R_{K,L}(\delta_1) \goto D)$. Then $H^2(Q_1^\dual(\delta_1)) = (0)$ because $P_{\bullet}$ is regular generic. By Lemmas \ref{lemma:exactness-selmer-groups} and \ref{lemma:composition-zero} we have a diagram with exact rows
\begin{equation*}
\xymatrix{
0 \ar[r] & U \ar[r] \ar[d] & \fr t_{D,P_{\bullet}}^{\Ref,\HT}/\fr t_{D,f} \ar[r]  \ar@{^{(}->}[d] & V \ar[d] \ar[r] & 0\\
0 \ar[r] & H^1_{/f}( Q_1^\dual(\delta_1)) \ar[r] & H^1_{/f}(D\tensor Q_1^\dual) \ar[r] & H^1_{/f}(\ad Q_1) \ar[r] & 0
}
\end{equation*}
where $U$ and $V$ are defined by the diagram itself. We separately bound $\dim_L U$ and $\dim_L V$.

By Lemma \ref{lemma:refined-hodge-tate-stable}, $V \ci \fr t_{Q_1,P_{\bullet}'}^{\Ref,\HT}/\fr t_{Q_1,f}$ where $P_{\bullet}'$ is the triangulation induced on $Q_1$ from $P_{\bullet}$. Thus by induction we have
\begin{equation*}
\dim_L V \leq \dim_L \fr t_{Q_1,P_{\bullet}'}^{\Ref,\HT}/\fr t_{Q_1,f} \leq \sum_{2 \leq i < j \leq n} \dim_L H^1_{/f}(\delta_i\delta_j^{-1}).
\end{equation*}
On the other hand, $U \ci H^1_{/f}(Q_1^\dual(\delta_1))$ and so
\begin{equation*}
\dim_L U \leq \dim_L H^1_{/f}(Q_1^\dual(\delta_1)) \overset{\text{Lemma \ref{lemma:exactness-selmer-groups}}}{=} \sum_{j=2}^n \dim_L H^1_{/f}(\delta_1\delta_j^{-1}). 
\end{equation*}
Putting the two upper bounds together we get the result.
\end{proof}
Finally we translate Theorem \ref{theorem:HT-ref-bound} into an upper bound in terms of critical types.
\begin{lemma}\label{lemma:characters}
If $\delta: K^\times \goto L^\times$ is a crystalline character and $H^2(\delta) = (0)$ then $\dim_L H^1_{/f}(\delta) = \sizeof\set{\tau: K \inject L \st \HT_\tau(\delta) \geq 0}$.
\end{lemma}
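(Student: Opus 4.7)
The plan is to reduce the lemma to computing $\dim_L D_{\dR}^+(\cal R_{K,L}(\delta))$ by combining the Euler--Poincar\'e formula for $(\varphi,\Gamma_K)$-modules with the Selmer dimension formula \eqref{eqn:selmer-dimension}, and then to evaluate this dimension directly from the Hodge filtration.

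First I will apply the Euler--Poincar\'e formula (see \cite{Liu-CohomologyDuality}), which for the rank one $(\varphi,\Gamma_K)$-module $\cal R_{K,L}(\delta)$ reads
\begin{equation*}
\dim_L H^0(\delta) - \dim_L H^1(\delta) + \dim_L H^2(\delta) = -(K:\Q_p).
\end{equation*}
Combining this with \eqref{eqn:selmer-dimension} and the hypothesis $H^2(\delta) = (0)$, I obtain
\begin{equation*}
\dim_L H^1_{/f}(\delta) = (K:\Q_p) - \dim_L D_{\dR}(\delta)/D_{\dR}^+(\delta) = \dim_L D_{\dR}^+(\delta),
\end{equation*}
where the last equality uses that since $\delta$ is crystalline, $D_{\dR}(\cal R_{K,L}(\delta))$ is free of rank one over $K \tensor_{\Q_p} L$, and thus of $L$-dimension $(K:\Q_p)$. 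It therefore suffices to establish
\begin{equation*}
\dim_L D_{\dR}^+(\cal R_{K,L}(\delta)) = \sizeof \set{\tau : K \inject L \st \HT_\tau(\delta) \geq 0}.
\end{equation*}

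To finish, I decompose $D_{\dR}(\cal R_{K,L}(\delta))$ along the splitting $K \tensor_{\Q_p} L = \prod_{\tau} L$. Each $\tau$-component is one-dimensional over $L$, and unwinding the recollections of Section \ref{subsec:first-subsec} together with the sign convention of Section \ref{subsec:notations} (chosen so that the identity character of $K^\times$ has Hodge--Tate weight $-1$ at each embedding), the unique jump in the induced Hodge filtration on the $\tau$-component sits at $\HT_\tau(\delta)$. Thus the $\tau$-component of $\Fil^0 D_{\dR} = D_{\dR}^+$ is full when $\HT_\tau(\delta) \geq 0$ and zero otherwise, and summing over $\tau$ gives the desired count. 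The argument is formal once the Euler--Poincar\'e and fine Selmer dimension formulas are in place; the only bookkeeping to watch is the sign-convention check just described, which I expect to be entirely routine rather than a genuine obstacle.
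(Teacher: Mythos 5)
Your proposal is correct and follows essentially the same route as the paper: apply the Euler--Poincar\'e formula for $(\varphi,\Gamma_K)$-modules together with the fine Selmer dimension formula \eqref{eqn:selmer-dimension} to reduce to $(K:\Q_p) - \dim_L D_{\dR}/D_{\dR}^+$, and then read off the $\tau$-decomposition of the Hodge filtration. The only cosmetic difference is that you phrase the final count as $\dim_L D_{\dR}^+$ while the paper writes it as $(K:\Q_p)$ minus the number of $\tau$ with $\HT_\tau(\delta) < 0$; these are the same observation.
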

\begin{proof}
By the formula \eqref{eqn:selmer-dimension} we have that 
\begin{equation*}
\dim_L H^1_f(\delta) = \dim_L H^0(\delta) + \dim_L D_{\dR}(\cal R_{K,L}(\delta))/ D_{\dR}^+(\cal R_{K,L}(\delta)).
\end{equation*}
 On the other hand, since $H^2(\delta) = (0)$, the Euler--Poincar\'e characteristic formula \cite[Theorem 1.2(a)]{Liu-CohomologyDuality} implies that $\dim_L H^1(\delta) = (K:\Q_p) + \dim_L H^0(\delta)$. Thus,
\begin{align*}
\dim_L H^1_{/f}(\delta) &= (K:\Q_p) - \dim_L D_{\dR}(\cal R_{K,L}(\delta))/D_{\dR}^+(\cal R_{K,L}(\delta))\\
&= (K:\Q_p) - \sizeof\set{ \tau \st \HT_\tau(\delta) < 0}.
\end{align*}
The result is now clear.
\end{proof}
Recall that if $\sigma \in S_n$ then its length is given by $ \ell(\sigma) = \sizeof\set{(i,j) \st i < j \text{ and } \sigma(i) > \sigma(j)}$.
\begin{corollary}\label{corollary:upper-bound-HT}
Suppose that $D$ is a crystalline $(\varphi,\Gamma_K)$-module over $\cal R_{K,L}$ equipped with a regular generic triangulation $P_{\bullet}$ and let $(\sigma_\tau)_\tau$ be the critical type of $P_{\bullet}$. Then
\begin{equation*}
\dim_L \fr t_{D,P_{\bullet}}^{\Ref,\HT}/\fr t_{D,f} \leq \sum_\tau 
\ell(\sigma_\tau),
\end{equation*}
\end{corollary}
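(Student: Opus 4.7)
The plan is to combine Theorem~\ref{theorem:HT-ref-bound} with Lemma~\ref{lemma:characters} and then re-express the resulting sum as a count of inversions. Theorem~\ref{theorem:HT-ref-bound} reduces the problem to the upper bound
$$\dim_L \fr t_{D,P_\bullet}^{\Ref,\HT}/\fr t_{D,f} \leq \sum_{1 \leq i < j \leq n} \dim_L H^1_{/f}(\delta_i\delta_j^{-1}),$$
so I need only evaluate each summand on the right. To apply Lemma~\ref{lemma:characters} to $\delta_i\delta_j^{-1}$, I will check that this character is crystalline and that $H^2(\delta_i\delta_j^{-1}) = 0$. The latter is built into the regular generic hypothesis on $P_\bullet$. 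For the former, each parameter $\delta_i$ of a triangulation of a crystalline $(\varphi,\Gamma_K)$-module is itself crystalline: its restriction to $\cal O_K^\times$ is the algebraic character dictated by the weight matching recalled before Definition~\ref{defi:critical-type}, and its value on any uniformizer is determined by the corresponding crystalline Frobenius eigenvalue (cf.\ the discussion surrounding \eqref{eqn:ref-tri}).

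Granted this, Lemma~\ref{lemma:characters} gives
$$\dim_L H^1_{/f}(\delta_i\delta_j^{-1}) = \#\{\tau : K \inject L \mid \HT_\tau(\delta_i\delta_j^{-1}) \geq 0\}.$$
The next step is to translate this into a condition on the critical type. By the weight matching, $\HT_\tau(\delta_i) = h_{\sigma_\tau(i),\tau}$, so that
$$\HT_\tau(\delta_i\delta_j^{-1}) = h_{\sigma_\tau(i),\tau} - h_{\sigma_\tau(j),\tau}.$$
Because the weights $h_{1,\tau} < h_{2,\tau} < \dotsb < h_{n,\tau}$ of $D$ are strictly increasing, this quantity is $\geq 0$ (indeed strictly positive) if and only if $\sigma_\tau(i) > \sigma_\tau(j)$. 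For $i < j$ this is exactly the condition that $(i,j)$ is an inversion of $\sigma_\tau$.

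Swapping the order of summation then gives
$$\sum_{1 \leq i < j \leq n} \dim_L H^1_{/f}(\delta_i\delta_j^{-1}) = \sum_\tau \#\{(i,j) \mid i < j,\ \sigma_\tau(i) > \sigma_\tau(j)\} = \sum_\tau \ell(\sigma_\tau),$$
which is the desired bound. There is no substantial obstacle here: once Theorem~\ref{theorem:HT-ref-bound} and Lemma~\ref{lemma:characters} are in hand, the corollary is essentially a dictionary between the Hodge--Tate weights of the ratios $\delta_i\delta_j^{-1}$ and the inversion statistics of the critical types. The only point demanding any care is the crystallinity of the $\delta_i\delta_j^{-1}$, which I address above.
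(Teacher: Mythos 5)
Your proof is correct and follows essentially the same route as the paper: apply Theorem~\ref{theorem:HT-ref-bound}, evaluate each summand via Lemma~\ref{lemma:characters}, translate $\HT_\tau(\delta_i\delta_j^{-1}) = h_{\sigma_\tau(i),\tau} - h_{\sigma_\tau(j),\tau}$ into the inversion condition $\sigma_\tau(i) > \sigma_\tau(j)$, and interchange the order of summation to obtain $\sum_\tau \ell(\sigma_\tau)$. The one thing you spell out that the paper leaves implicit is the verification that the $\delta_i$ (hence $\delta_i\delta_j^{-1}$) are crystalline, which is indeed a hypothesis of Lemma~\ref{lemma:characters}; the clean way to see this is that $D_{\cris}(P_j) \subset D_{\cris}(D)$ has rank $j$ so each step of the triangulation is crystalline, and the rank-one graded pieces then have $\dim_L D_{\cris}(P_j/P_{j-1}) = 1$ by left-exactness of $D_{\cris}$ and rank counting.
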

\begin{proof}
Write $(\delta_1,\dotsc,\delta_n)$ for the parameter of $P_{\bullet}$. By Theorem \ref{theorem:HT-ref-bound} we have 
\begin{equation*}
\dim_L \fr t_{D,P_{\bullet}}^{\Ref,\HT}/\fr t_{D,f} \leq \sum_{1 \leq i < j \leq n} \dim_L H^1_{/f}(\delta_i\delta_j^{-1}).
\end{equation*}
Since the Hodge--Tate weights of $D$ are distinct and the triangulation $P_{\bullet}$ is regular generic, Lemma \ref{lemma:characters} above implies that if $i\neq j$, then $\dim_L H^1_{/f}(\delta_i\delta_j^{-1}) = \sizeof\set{\tau \st \HT_\tau(\delta_i) > \HT_\tau(\delta_j)}$. Thus Theorem \ref{theorem:HT-ref-bound} gives us
\begin{align*}
\dim_L \fr t_{D,P_{\bullet}}^{\Ref,\HT}/\fr t_{D,f} \leq\sum_{1 \leq i < j \leq n} \dim_L H^1_{/f}(\delta_i\delta_j^{-1}) &= \sum_{\tau} \sizeof \set{(i,j) \st i < j \text{ and } \HT_\tau(\delta_i) > \HT_\tau(\delta_j)}\\
&= \sum_\tau \underlabel{\ell(\sigma_\tau)}{\sizeof \set{(i,j) \st i < j \text{ and } \sigma_\tau(i) > \sigma_\tau(j)}}.
\end{align*}
The final equality was the definition of the critical type (see Definition \ref{defi:critical-type}).
\end{proof}

\subsection{The relative tangent space for weakly-refined deformations}\label{subsec:rel-tangent-space}
Suppose that $D$ is a crystalline $(\varphi,\Gamma_K)$-module over $\cal R_{K,L}$ with distinct crystalline eigenvalues and distinct Hodge--Tate weights. We assume throughout this section that $D$ is also equipped with a regular generic triangulation $P_{\bullet}$ whose parameter we denote by $(\delta_1,\dotsc,\delta_n)$. We write $\fr t_{D,P_{\bullet}}^{\Ref}$ for the tangent space to the weakly-refined deformations with respect to $P_{\bullet}$.

If $\twid D \in \fr t_{D}$ then the uniqueness of the Hodge--Tate weights means $\twid D$ has Hodge--Sen--Tate weights $\set{\eta_{i,\tau}}_{i,\tau}$ in $L[\varepsilon]$ such that $\eta_{i,\tau} = h_{i,\tau} + \varepsilon\mathrm{d}\eta_{i,\tau}$, where $\mathrm{d}\eta_{i,\tau} \in L$. The map $\twid D \mapsto (\mathrm{d}\eta_{i,\tau})_{i,\tau}$ defines an $L$-linear map 
\begin{equation*}
\fr t_{D} \overto{\mathrm{d}\eta} \bigoplus_\tau L^{\dsum n}
\end{equation*}
and by definition of Hodge--Tate weights we have an exact sequence 
\begin{equation}\label{eqn:key-maps}
0 \goto \fr t_{D,P_{\bullet}}^{\Ref,\HT}/\fr t_{D,f} \goto \fr t_{D,P_{\bullet}}^{\Ref}/\fr t_{D,f} \goto \bigoplus_\tau L^{\dsum n}
\end{equation}
of $L$-vector spaces. 
\begin{lemma}\label{lemma:ramification-weights}
Let $(\sigma_\tau)_{\tau}$ be the critical type of $P_{\bullet}$. The image of $\mathrm{d}\eta$ is contained in the subspace $\bigoplus_\tau V_{\sigma_{\tau}} \ci \bigoplus_\tau L^{\dsum n}$ where 
\begin{equation*}
V_{\sigma_\tau} := \set{ (v_i) \in L^{\dsum n} \st v_{\sigma_\tau(i)} = v_{i} \text{ for $i=1,\dotsc,n$}}.
\end{equation*}
\end{lemma}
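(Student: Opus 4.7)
My plan is to reduce the constraint defining $V_{\sigma_\tau}$ to the telescoping identities
\[
\sum_{i=1}^{j} \mathrm{d}\eta_{\sigma_\tau(i),\tau} \;=\; \sum_{i=1}^{j} \mathrm{d}\eta_{i,\tau} \qquad (1\le j\le n,\ \tau\colon K\hookrightarrow L),
\]
from which subtracting the identity for $j-1$ from that for $j$ gives $\mathrm{d}\eta_{\sigma_\tau(j),\tau}=\mathrm{d}\eta_{j,\tau}$. The case $j=n$ is automatic, so the real work is at $1\le j\le n-1$.

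Fix $\tau$ and $j$, and let $\widetilde D\in\fr t_{D,P_\bullet}^{\Ref}$ have Hodge--Sen--Tate weights $\eta_{i,\tau}=h_{i,\tau}+\varepsilon\mathrm{d}\eta_{i,\tau}$. By Definition~\ref{def:weakly-refined} one has $\wedge^j\widetilde D\in\fr X^{\phi_1\cdots\phi_j}_{\wedge^j D}(L[\varepsilon])$. Unwinding the definition via the twist $\underline{\eta_1^{(j)}}_{\varpi_K}$ (whose $\tau$-Hodge--Sen--Tate weight is $-\sum_{i=1}^j\eta_{i,\tau}$), the Frobenius-generalized-eigenspace of $D_{\cris}^+(\wedge^j\widetilde D(\underline{\eta_1^{(j)}}_{\varpi_K}))$ at $\Phi_A$ deforming $\Phi=\phi_1\cdots\phi_j\prod_\tau\tau(\varpi_K)^{-\sum_{i=1}^jh_{i,\tau}}$ is free of rank one over $K_0\otimes L[\varepsilon]$. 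Because $\Phi$ is a simple eigenvalue (regular hypothesis), this eigenspace generates a saturated rank-one sub-$(\varphi,\Gamma_K)$-module $\widetilde Q=\cal R_{K,L[\varepsilon]}(\widetilde\mu)$ of $\wedge^j\widetilde D(\underline{\eta_1^{(j)}}_{\varpi_K})$ which deforms $\wedge^jP_j(\underline{h_1^{(j)}}_{\varpi_K})$. The $\tau$-Hodge--Sen--Tate weight of $\widetilde Q$ is one of $\sum_{i\in S}\eta_{i,\tau}-\sum_{i=1}^j\eta_{i,\tau}$ with $|S|=j$; matching the reduction modulo $\varepsilon$ to the weight of $\wedge^jP_j(\underline{h_1^{(j)}}_{\varpi_K})$ and using the distinctness of the $h_{i,\tau}$ forces $S=\{\sigma_\tau(1),\dotsc,\sigma_\tau(j)\}$, so that
\[
\HT_\tau(\widetilde\mu)=\sum_{i=1}^j\bigl(h_{\sigma_\tau(i),\tau}-h_{i,\tau}\bigr)+\varepsilon\sum_{i=1}^j\bigl(\mathrm{d}\eta_{\sigma_\tau(i),\tau}-\mathrm{d}\eta_{i,\tau}\bigr).
\]

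The hard part --- and the key step --- is a rigidity claim for rank-one $(\varphi,\Gamma_K)$-modules: if $\cal R_{K,L[\varepsilon]}(\widetilde\mu)^{\Gamma_K}$ is free of rank one over $K_0\otimes L[\varepsilon]$, then the $\varepsilon$-coefficient of $\HT_\tau(\widetilde\mu)$ must vanish at every $\tau$, i.e., $\widetilde\mu|_{\cal O_K^\times}$ must be algebraic. Informally, lifting a generator of $\cal R_{K,L}(\mu)^{\Gamma_K}$ to $\cal R_{K,L[\varepsilon]}(\widetilde\mu)$ is obstructed by an $H^1(\Gamma_K,\cal R_{K,L})$-class determined by the $\varepsilon$-parts $(\alpha_\tau)_\tau$ of $\HT_\tau(\widetilde\mu)$, and this class vanishes if and only if every $\alpha_\tau=0$; the nontriviality of the relevant log-like cocycles in $H^1(\Gamma_K,\cal R_{K,L})$ is the obstruction. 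Since $\Phi$ is simple we have $F(\wedge^j\widetilde D(\underline{\eta_1^{(j)}}_{\varpi_K}))=F(\widetilde Q)$, so Kisin's condition forces $\sum_{i=1}^j(\mathrm{d}\eta_{\sigma_\tau(i),\tau}-\mathrm{d}\eta_{i,\tau})=0$ at each $\tau$, which is the required telescoping identity.
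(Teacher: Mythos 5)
Your overall strategy---reduce to the telescoping identities by applying the weakly-refined/Kisin-type condition to $\wedge^j\widetilde D$ for $j=1,\dotsc,n-1$, and then invoke a rigidity statement for the saturated rank-one submodule generated by the Frobenius eigenspace---is exactly the route the paper takes. (The paper's proof is a one-line citation of \cite[Lemma 7.2]{Bergdall-ParabolineVariation}; your write-up makes explicit the exterior-power bookkeeping that citation hides, and it agrees with the hint ``compare with the proofs of Lemma~\ref{lemma:composition-zero} and \cite[Theorem 7.1]{Bergdall-ParabolineVariation}.'')

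There are, however, two points where the argument as written would not survive scrutiny.

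First, the identification of the index set $S$ is incorrectly justified. You write that ``using the distinctness of the $h_{i,\tau}$ forces $S=\set{\sigma_\tau(1),\dotsc,\sigma_\tau(j)}$,'' but distinctness of the individual weights does \emph{not} make the $j$-element subset sums distinct: for $\set{0,1,2,3}$ and $j=2$ one has $0+3=1+2$. What does pin down the Sen eigenvalue is that $\widetilde Q$ deforms the specific residual submodule $\wedge^j P_j(\underline{h_1^{(j)}}_{\varpi_K})$, whose $\tau$-Sen module is the honest eigenline spanned by $e_{\sigma_\tau(1)}\wedge\dotsb\wedge e_{\sigma_\tau(j)}$. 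Because the Sen operator on $D_{\sen}^\tau(\wedge^j\widetilde D)$ is diagonalizable over $L[\varepsilon]$ (it is $\wedge^j$ of a diagonalizable operator, since the weights of $\widetilde D$ are residually distinct), a $\Theta$-stable rank-one $L[\varepsilon]$-submodule deforming a given residual eigenline has eigenvalue equal to the corresponding deformed eigenvalue, regardless of whether other residual eigenvalues coincide. That gives $\HT_\tau(\widetilde\mu)=\sum_{i=1}^j\bigl(\eta_{\sigma_\tau(i),\tau}-\eta_{i,\tau}\bigr)$, as you want, but the reason is the deformation of the eigen\emph{vector}, not distinctness of the subset sums.

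Second, and more seriously, the ``rigidity claim'' is the entire content of the lemma and is only sketched. The assertion that $F(\widetilde Q)$ being free of rank one over $K_0\otimes_{\Q_p}L[\varepsilon]$ forces $\widetilde\mu\vert_{\cal O_K^\times}$ to be algebraic is precisely what the paper outsources to \cite[Lemma 7.2]{Bergdall-ParabolineVariation}; your appeal to an obstruction class in $H^1(\Gamma_K,\cal R_{K,L})$ and ``log-like cocycles'' is the right intuition, but as written it is not a proof. One needs, for a rank-one $\cal R_{K,L[\varepsilon]}$-module, an actual computation of $\widetilde Q^{\Gamma_K}$ in terms of the $\varepsilon$-parts $(\alpha_\tau)_\tau$ of $\HT_\tau(\widetilde\mu)$ showing the length drops if any $\alpha_\tau\neq 0$. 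Until that is supplied (or the reference is cited), the argument has a genuine gap at its core.
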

\begin{proof}
The lemma states $\eta_{\sigma_{\tau}(i),\tau} - \eta_{i,\tau}$ is constant for each $i,\tau$. This follows from \cite[Lemma 7.2]{Bergdall-ParabolineVariation} (compare with the proofs of Lemma \ref{lemma:composition-zero} and \cite[Theorem 7.1]{Bergdall-ParabolineVariation}).
\end{proof}

Recall that if $\sigma \in S_n$ then we write $c(\sigma)$ for the number of orbits in $\set{1,\dotsc,n}$ under the action of the cyclic group generated by $\sigma$.
\begin{theorem}\label{theorem:best-upper-bound}
Suppose that $D$ is a crystalline $(\varphi,\Gamma_K)$-module over $\cal R_{K,L}$ equipped with a regular generic triangulation $P_{\bullet}$ with critical type $(\sigma_{\tau})_{\tau}$. Then
\begin{equation*}
\dim \fr t_{D,P_{\bullet}}^{\Ref}/\fr t_{D,f} \leq \sum_{\tau} \ell(\sigma_\tau) +  c(\sigma_\tau).
\end{equation*}
\end{theorem}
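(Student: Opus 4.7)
The proof assembles pieces already in place: the exact sequence \eqref{eqn:key-maps}, Corollary \ref{corollary:upper-bound-HT}, and Lemma \ref{lemma:ramification-weights}. First I would invoke \eqref{eqn:key-maps}, which immediately yields
\begin{equation*}
\dim_L \fr t_{D,P_{\bullet}}^{\Ref}/\fr t_{D,f} \;\leq\; \dim_L \fr t_{D,P_{\bullet}}^{\Ref,\HT}/\fr t_{D,f} \;+\; \dim_L \im(\mathrm{d}\eta|_{\fr t_{D,P_{\bullet}}^{\Ref}/\fr t_{D,f}}).
\end{equation*}
The first summand is bounded by $\sum_\tau \ell(\sigma_\tau)$ by Corollary \ref{corollary:upper-bound-HT}, so the whole task reduces to bounding $\dim_L \im(\mathrm{d}\eta)$ by $\sum_\tau c(\sigma_\tau)$.

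Next I would apply Lemma \ref{lemma:ramification-weights}, which places $\im(\mathrm{d}\eta) \subseteq \bigoplus_\tau V_{\sigma_\tau}$. It then suffices to check, for each embedding $\tau$, the elementary combinatorial identity
\begin{equation*}
\dim_L V_{\sigma_\tau} \;=\; c(\sigma_\tau).
\end{equation*}
This is immediate: the defining condition $v_{\sigma_\tau(i)} = v_i$ for all $i$ forces a vector in $V_{\sigma_\tau}$ to be constant on each orbit of the cyclic group $\langle \sigma_\tau \rangle$ acting on $\{1,\dotsc,n\}$, and conversely any such locally constant vector lies in $V_{\sigma_\tau}$. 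Hence $V_{\sigma_\tau}$ has a basis indexed by the orbits, of cardinality $c(\sigma_\tau)$.

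Combining the three bounds,
\begin{equation*}
\dim_L \fr t_{D,P_{\bullet}}^{\Ref}/\fr t_{D,f} \;\leq\; \sum_\tau \ell(\sigma_\tau) + \sum_\tau \dim_L V_{\sigma_\tau} \;=\; \sum_\tau \bigl(\ell(\sigma_\tau) + c(\sigma_\tau)\bigr),
\end{equation*}
which is the desired inequality. There is no real obstacle here: the heavy lifting was done in the constant-weight case (Corollary \ref{corollary:upper-bound-HT}) and in the weight-variation constraint (Lemma \ref{lemma:ramification-weights}, which rests on \cite[Lemma 7.2]{Bergdall-ParabolineVariation}); the present theorem is the clean packaging of these two inputs via \eqref{eqn:key-maps}. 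The only subtlety worth flagging in the write-up is that the bound on $\dim V_{\sigma_\tau}$ uses the cyclic group generated by $\sigma_\tau$, matching exactly the definition of $c(\sigma_\tau)$ recalled just before the theorem.
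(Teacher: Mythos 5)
Your proof is correct and takes precisely the same route as the paper: invoke the exact sequence \eqref{eqn:key-maps}, bound the Hodge--Tate-constant piece by Corollary \ref{corollary:upper-bound-HT}, bound the weight-variation piece by Lemma \ref{lemma:ramification-weights} and the identity $\dim_L V_{\sigma_\tau} = c(\sigma_\tau)$, and add. You have merely spelled out the orbit-counting argument for $\dim_L V_{\sigma_\tau} = c(\sigma_\tau)$ that the paper dismisses as ``easy to see.''
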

\begin{proof}
It is easy to see that $\dim_L V_{\sigma_\tau} = c(\sigma_\tau)$ for each embedding $\tau$. By Lemma \ref{lemma:ramification-weights} and Corollary \ref{corollary:upper-bound-HT} we deduce that
\begin{equation*}
\dim_L \fr t_{D,P_{\bullet}}^{\Ref}/\fr t_{D,f} \leq \dim_L \fr t_{D,P_{\bullet}}^{\Ref,\HT}/\fr t_{D,f} + \sum_\tau c(\sigma_\tau) \leq \sum_\tau \ell(\sigma_\tau) + c(\sigma_\tau),
\end{equation*}
as we wanted.
\end{proof}
To end this section we briefly explain the upper bound we have produced. If $\sigma \in S_n$ then let $\ord(\sigma)$ be its order as an element of $S_n$. We leave the following lemma for the reader.
\begin{lemma}\label{lemma:cycle-lemma}
If $\sigma \in S_n$ is a cycle then $\ell(\sigma) + 1 \geq \ord(\sigma)$
with equality if and only if $\sigma$ is a product of distinct simple transpositions. 
\end{lemma}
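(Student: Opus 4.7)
The plan is a purely combinatorial analysis inside $S_n$. The inequality $\ell(\sigma) + 1 \geq \ord(\sigma)$ for a $k$-cycle $\sigma$ is standard: write $c'(\tau)$ for the total number of cycles in the disjoint-cycle decomposition of a permutation $\tau$ (fixed points counted as $1$-cycles). Multiplication by a single transposition changes $c'(\tau)$ by exactly one, so any factorization of $\sigma$ into $m$ transpositions satisfies $m \geq n - c'(\sigma)$. A $k$-cycle in $S_n$ has $c'(\sigma) = n - k + 1$, so $\ell(\sigma) \geq k - 1 = \ord(\sigma) - 1$.

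For the equality criterion, I would attach to any expression $\sigma = s_{i_1} \cdots s_{i_m}$ a ``support graph'': let $T = \{s_{i_1}, \ldots, s_{i_m}\}$ as a set and form the graph on $\{1, \ldots, n\}$ whose edges are $\{a, a+1\}$ for $s_a \in T$. The subgroup of $S_n$ generated by $T$ is then the direct product of the symmetric groups on the connected components (intervals) $[a_1, b_1], \ldots, [a_r, b_r]$ of the support graph of size $\geq 2$. Since $\sigma$ lies in this parabolic and has cycle type $(k, 1, \ldots, 1)$, exactly one of its factors in the product must be a $k$-cycle and the rest must be trivial; the enclosing component therefore has size $\geq k$ and so contributes at least $k - 1$ generators to $T$. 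This yields the key bound $|T| \geq k - 1$.

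The equivalence is now formal. If $\ell(\sigma) = k - 1$, apply the bound to a reduced expression of length $k - 1$: the number of distinct letters is at most $k - 1$ (from the length) and at least $k - 1$ (from the bound), so the expression uses pairwise distinct simple transpositions. Conversely, if $\sigma = s_{j_1} \cdots s_{j_m}$ is a product of distinct simple transpositions, the parabolic analysis again expresses $\sigma$ as a product of commuting permutations, one per connected component of $T$, so the single-cycle hypothesis on $\sigma$ forces $T$ to have exactly one component. To finally conclude $m = k - 1$, and hence $\ell(\sigma) \leq m = k - 1$, I would prove by induction on $m$ that if $T = \{s_a, \ldots, s_{a+m-1}\}$ is a single contiguous block, then any product of these $m$ generators in any order is a single $(m+1)$-cycle on $[a, a + m]$; the induction peels off the endpoint $s_{a + m - 1}$ using that it commutes with $s_j$ for $j \leq a + m - 3$, reducing to the inductive hypothesis on the shorter block $\{s_a, \ldots, s_{a + m - 2}\}$.

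The main technical ingredient, and the point I expect to require the most care, is the inductive claim in the last paragraph; everything else is a short formal counting argument via the support graph.
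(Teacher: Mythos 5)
The paper leaves this lemma to the reader, so there is no proof in the text to compare against; your argument is correct, and the strategy is a natural and complete way to do it: the cycle-count invariant for the inequality $\ell(\sigma) \geq \ord(\sigma) - 1$, then the support graph and parabolic (Young subgroup) decomposition for the equality analysis. One small ordering wrinkle: in the converse direction, the assertion that the single-cycle hypothesis forces the support $T$ to have exactly one connected component relies on knowing that the product of the generators within each component is nontrivial, and that nontriviality is a consequence of the inductive claim you state and prove only at the very end. Promoting that claim (equivalently, that a Coxeter element of a type-$A_m$ parabolic of $S_n$ is an $(m+1)$-cycle) to a standalone preliminary step would remove the forward reference and make the converse read linearly, but there is no gap in the mathematics itself.
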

\begin{proposition}\label{prop:permutations}
Let $\sigma \in S_n$. Then $\ell(\sigma) + c(\sigma) \geq n$ with equality if and only if $\sigma$ is a product of distinct simple transpositions.
\end{proposition}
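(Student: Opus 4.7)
The plan is to reduce to Lemma~\ref{lemma:cycle-lemma} via the disjoint cycle decomposition of $\sigma$. Write $\sigma = c_1 \dotsb c_m$ as a product of the non-trivial disjoint cycles on supports $S_1,\dotsc,S_m$ of sizes $k_i = |S_i| \geq 2$, plus $n_0 = n - \sum_i k_i$ fixed points, so that $c(\sigma) = m + n_0$. The first step is the inequality $\ell(\sigma) \geq \sum_i \ell(c_i)$: an inversion $(a,b)$ of $\sigma$ with $a,b$ in a common $S_i$ is precisely an inversion of $c_i$, so these ``within-orbit'' inversions already contribute $\sum_i \ell(c_i)$, and the remaining ``cross-orbit'' inversions are non-negative. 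Applying Lemma~\ref{lemma:cycle-lemma} to each $c_i$ (whose order is $k_i$) gives $\ell(c_i) \geq k_i - 1$, and summing yields $\ell(\sigma) \geq \sum_i (k_i - 1) = n - m - n_0 = n - c(\sigma)$.

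For the equality claim, chasing the chain above shows that equality forces simultaneously (i) no cross-orbit inversions and (ii) $\ell(c_i) = k_i - 1$ for each $i$, the latter meaning by Lemma~\ref{lemma:cycle-lemma} that each $c_i$ is a product of distinct simple transpositions, say $c_i = s_{j_1}\dotsb s_{j_{k_i - 1}}$ with the $j_\ell$ pairwise distinct. I would next observe that (ii) alone forces each support $S_i$ to be an interval $[\alpha_i,\beta_i]$ of consecutive integers: the subgroup of $S_n$ generated by any set of distinct simple transpositions $\{s_j : j \in J\}$ decomposes as a product of symmetric groups on the intervals determined by maximal runs in $J$, and since $c_i$ is a single cycle it must live inside one of these factors, whose vertex set is consecutive. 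A length count then forces $S_i$ to equal that interval exactly. Once the $S_i$ are disjoint intervals, condition (i) is automatic: for $a < b$ in distinct orbits, both $\sigma(a)$ and $\sigma(b)$ stay inside their respective intervals (or are fixed), preserving order.

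To conclude the equivalence with ``$\sigma$ is a product of distinct simple transpositions'', concatenating the reduced expressions for the $c_i$ writes $\sigma$ as such a product, since disjoint intervals use disjoint index sets $[\alpha_i,\beta_i -1]$. Conversely, given $\sigma = s_{a_1}\dotsb s_{a_r}$ with distinct $a_\ell$, I would group the factors according to the connected components of the graph on $\{1,\dotsc,n\}$ with edges $\{a_\ell, a_\ell + 1\}$; commuting relations between non-adjacent simple transpositions show that the partial products across components commute, and each partial product is a cycle on its interval realizing equality in Lemma~\ref{lemma:cycle-lemma}, which then propagates to equality in the overall bound. The hard part is the connectedness step in paragraph~2 identifying the support of a cycle expressed with distinct simple transpositions as an interval; the rest is bookkeeping around the cycle decomposition and Lemma~\ref{lemma:cycle-lemma}.
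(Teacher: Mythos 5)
There is a genuine gap at the very first step. The inequality $\ell(\sigma) \geq \sum_i \ell(c_i)$, with $\ell(c_i)$ the length of $c_i$ computed in $S_n$, is simply false: take $\sigma = (1\,3)(2\,4)\in S_4$; then $\ell(\sigma)=4$ while $\ell((1\,3))=\ell((2\,4))=3$, so $\sum_i\ell(c_i)=6$. The error lies in the claim that the within-orbit inversions ``already contribute $\sum_i\ell(c_i)$'' — this conflates the within-orbit inversions of $c_i$ with the full set of inversions of $c_i$ as an element of $S_n$. For instance $(1\,3)$ has three inversions $(1,2),(1,3),(2,3)$, of which only $(1,3)$ has both entries in the orbit $\{1,3\}$. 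What the within-orbit count really equals is $\sum_i \ell(\pi_i)$, where $\pi_i\in S_{k_i}$ is the restriction of $c_i$ to $S_i$ relabeled order-preservingly, and that is where Lemma~\ref{lemma:cycle-lemma} should be applied: $\ell(\sigma)\geq\sum_i\ell(\pi_i)\geq\sum_i(k_i-1)=n-c(\sigma)$. The same conflation breaks your equality analysis: equality yields $\ell(\pi_i)=k_i-1$, i.e.\ the relabeled $\pi_i$ is a product of distinct simple transpositions of $S_{k_i}$, but this alone says nothing about $S_i$ being an interval (for $c_1=(1\,3)$ one has $\pi_1=s_1$ yet $S_1=\{1,3\}$ is not consecutive), so you cannot derive intervals from ``(ii) alone.'' One should instead use (i): if $S_i$ has a gap $a<b<c$ with $a,c\in S_i$ and $b\notin S_i$, then transitivity of $c_i$ on $S_i$ produces some $x\in S_i$ on one side of $b$ with $\sigma(x)$ on the other side, and comparing with $\sigma(b)$ (in whichever direction is needed) gives a cross-orbit inversion, contradicting (i). Once (i) forces the $S_i$ to be intervals, the relabeling is trivial and your argument then completes.

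For what it is worth, the paper's own proof contains the same defect in a stronger form: it asserts the identity $\ell(\sigma)=\sum_i\ell(\sigma_i)$ for disjoint cycles, which the example above also refutes. So your approach — disjoint-cycle decomposition feeding into Lemma~\ref{lemma:cycle-lemma} — is the same as the paper's, and both need the within-orbit/relabeling correction above. Your converse direction (a product of distinct simple transpositions achieves equality, via maximal runs in the index set) is correct, and is spelled out in more detail than the paper's corresponding remark.
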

\begin{proof}
Write $\sigma = \sigma_1\dotsb \sigma_r$ where the $\sigma_i$ are {\em disjoint} cycles. In particular, $\sigma$ is a product of distinct simple transpositions if and only if each $\sigma_i$ is. Next, we note that \begin{itemize}
\item $\ell(\sigma) = \sum \ell(\sigma_i)$ and
\item $c(\sigma) = n + r - \sum \ord(\sigma_i)$.
\end{itemize}
Thus we have
\begin{equation*}
\ell(\sigma) + c(\sigma) = n + \sum_{i=1}^r \left(\ell(\sigma_i) + 1 - \ord(\sigma_i)\right).
\end{equation*}
Lemma \ref{lemma:cycle-lemma} implies that the terms in the sum are all non-negative. This shows $\ell(\sigma) + c(\sigma) \geq n$ always. Moreover, we have equality if and only if $\ell(\sigma_i) + 1 - \ord(\sigma_i) = 0$ for all $i=1,\dotsc,r$. But Lemma \ref{lemma:cycle-lemma} also implies that this is equivalent to $\sigma_i$ being a product of distinct simple tranpositions for each $i=1,\dotsc,r$, so we are done.
\end{proof}

\begin{corollary}\label{corollary:bound}
Suppose that $D$ is a crystalline $(\varphi,\Gamma_K)$-module over $\cal R_{K,L}$ equipped with a regular generic triangulation $P_{\bullet}$ with critical type $(\sigma_{\tau})_{\tau}$. If each $\sigma_{\tau}$ is a product of distinct simple transpositions then $\dim_L \fr t_{D,P_{\bullet}}^{\Ref}/\fr t_{D,f} \leq (K:\Q_p)\cdot n$.
\end{corollary}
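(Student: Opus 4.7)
The plan is to deduce this as an immediate consequence of the two preceding results: Theorem \ref{theorem:best-upper-bound} gives the upper bound
\[
\dim_L \fr t_{D,P_{\bullet}}^{\Ref}/\fr t_{D,f} \leq \sum_\tau \bigl(\ell(\sigma_\tau) + c(\sigma_\tau)\bigr),
\]
and Proposition \ref{prop:permutations} evaluates each summand exactly in the situation at hand.

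First I would invoke Theorem \ref{theorem:best-upper-bound} to reduce the claim to bounding $\sum_\tau \bigl(\ell(\sigma_\tau) + c(\sigma_\tau)\bigr)$ by $(K:\Q_p)\cdot n$. Then I would apply Proposition \ref{prop:permutations} to each $\sigma_\tau$: by hypothesis every $\sigma_\tau$ is a product of distinct simple transpositions, so the proposition yields the equality $\ell(\sigma_\tau) + c(\sigma_\tau) = n$ for each embedding $\tau$.

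Finally I would sum over $\tau$, using that the number of embeddings $\tau: K \inject L$ is $(K:\Q_p)$ (which holds because $L$ is assumed to be large enough to contain all such embeddings, as fixed in Section \ref{subsec:notations}). This gives
\[
\sum_\tau \bigl(\ell(\sigma_\tau) + c(\sigma_\tau)\bigr) = \sum_\tau n = (K:\Q_p)\cdot n,
\]
which together with Theorem \ref{theorem:best-upper-bound} yields the desired inequality. There is no real obstacle here; the corollary is a direct bookkeeping consequence of the main local theorem and the combinatorial identity in Proposition \ref{prop:permutations}. The substantive content has already been established, so the proof is essentially a one-line concatenation.
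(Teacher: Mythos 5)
Your proof is correct and matches the paper's one-line argument exactly: combine Theorem \ref{theorem:best-upper-bound} with Proposition \ref{prop:permutations} and sum over the $(K:\Q_p)$ embeddings.
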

\begin{proof}
Combine Proposition \ref{prop:permutations} and Theorem  \ref{theorem:best-upper-bound}.
\end{proof}

\begin{remark}
Ostroff showed the author an easy argument that the number of $\sigma \in S_n$ which are products of distinct simple transpositions is given by $F_{2n}$ where $F_m$ is the Fibonacci sequence starting with $F_1 = 0$ and $F_2 = 1$. In particular, the proportion $F_{2n}/n!$ of permutations which are products of distinct simple transpositions tends to zero as $n \goto +\infty$.
\end{remark}

\section{Application to eigenvarieties}\label{section:application}

\subsection{Unitary groups and Galois representations}
Our goal in this subsection is to specify notations and conventions for unitary groups, automorphic representations, and Galois representations. We do not strive for the greatest generality; our goal is to illustrate how the local deformation calculation in Section \ref{section:weakly-refined} can be used to bound dimensions of tangent spaces on eigenvarieties. Our hypotheses may be weakened in various directions, especially as progress is made in constructing Galois representations and Langlands functoriality.

Let $F/F^+$ be a CM extension of number fields with $F^+$ totally real and $F$ a totally imaginary quadratic extension of $F^+$. We assume $F/F^+$ is unramified everywhere and each $p$-adic place of $F^+$ splits in $F$.

Fix an integer $n\geq 1$ which is either odd or if $n$ is even then assume that $n(F^+:\Q) \congruent 0 \bmod 4$. With this, we let $\bf G$ denote a unitary group in $n$ variables over $F^+$ such that
\begin{itemize}
\item $\bf G$ is split over $F$. We fix an isomorphism $\bf G\times_{F^+} F \simeq {\GL_n}_{/F}$.
\item $\bf G$ is quasi-split at each finite place of $F^+$.
\item $\bf G(F^+\tensor_{\Q} \R)$ is a finite product of copies of the compact real unitary group $\Un(n)$.
\end{itemize}
We refer to $\bf G$ as a definite unitary group associated to $F/F^+$.

If $w$ is a place of $F^+$ then let $F^+_w$ denote the corresponding local field (and similarly for places of $F$). If $w$ splits in $F$ then the choice of $\twid w \dvd w$ determines an isomorphism $\bf G(F_{w}^+)\simeq \GL_n(F_{\twid w}) = \GL_n(F_w^+)$ which implicitly depends on $\twid w$.

We fix a compact open subgroup $U^p = \prod_{w \ndvd p} U_w \ci \bf G(\A_{F^+}^{p\infty})$. Here, $\Af_{F^+}^{p\infty}$ is the finite adeles of $F^+$ away from $p$. We assume throughout that $U_w$ is maximal hyperspecial compact at every inert place of $F^+$. Finally, we also write $S$ for a finite set of finite places of $F^+$ such that $S$ contains all the $p$-adic places and all the places $w$ such that $U_w$ is not maximal hyperspecial compact. In particular, each place in $S$ is split in $F$. We will also use $S$ for the set of places $\twid w$ of $F$ such that $\twid w \dvd w$ with $w \in S$.

Choose an isomorphism $\C \simeq \bar \Q_p$. Then, each complex embedding $v_{\infty} : F^+ \inject \C$ corresponds uniquely to a pair $(v,\tau)$ where $v$ is a $p$-adic place of $F^+$ and $\tau: F_v^+\inject \bar \Q_p$ is an embedding. Given the pair $(v,\tau)$ we write $(v,\tau)_\infty$ for the corresponding infinite place of $F^+$.

Automorphic representations are irreducible direct summands in the space of complex-valued functions on $\G(F^+)\leftmod \G(\Af_{F^+})$ which are smooth and $G(F^+\tensor_{\Q} \R)$-finite (see \cite[Section 6.2.3]{BellaicheChenevier-Book} for example). Every automorphic representation $\pi$ may be factored as $\pi = \pi_\infty \tensor \pi_f$, where $\pi_f = \bigtensor' \pi_w$ is a representation of $\bf G(\Af_{F^+}^\infty)$ and $\pi_\infty$ is the weight of $\pi$, which is an algebraic representation of $\bf G(F^+\tensor_{\Q} \R)$. The weight $\pi_\infty$ factors as a tensor product $\bigtensor_{v_{\infty}} \pi_{v_\infty}$ of irreducible algebraic representations of $\Un(n)$ indexed by infinite places $v_{\infty}$ of $F^+$. Let $k_{v_\infty} = (k_{1,v_\infty}\geq k_{2,v_\infty}\geq \dotsb \geq k_{n,v_\infty})$ be the dominant weight associated to $\pi_{v_\infty}$. If $(v,\tau)$ is a pair as in the previous paragraph we write $k_{i,v,\tau} := k_{i,(v,\tau)_\infty}$ and for fixed $v,\tau$ we stress that $k_{1,v,\tau} \geq k_{2,v,\tau} \geq \dotsb \geq k_{n,v,\tau}$. We say that $\pi$ has tame level $U^p$ if $\pi_f^{U^p}\neq 0$.

By the work of many authors, for each automorphic representation $\pi$ of tame level $U^p$ there is a unique $n$-dimensional continuous semi-simple representation $\rho_{\pi} : G_{F,S} \goto \GL_n(\bar \Q_p)$ such that:\footnote{The notations and conventions may be found in Section \ref{subsec:notations}. Below, if $\twid w$ is a place of $F$, we write $\rho_{\pi,\twid w}$ for the restriction of $\rho_{\pi}$ to a decomposition group of $\twid w$. Everything depends the isomorphism $\C \simeq \Q_p$.}
\begin{enumerate}[(LCG-I)]
\item$\rho_{\pi}$ conjugate self-dual up to a twist, i.e. $\rho^{\perp}_\pi \simeq \rho_\pi(n-1)$ where $\rho^\perp$ is the conjugate dual representation $g \mapsto {}^t\rho(\twid cg\twid c)^{-1}$ ($\twid c \in G_{F^+}$ is any order two lift of the non-trivial element in $\Gal(F/F^+)$; see \cite{BellaicheChenevier-Sign}).\label{lcg-infinite}
\item If $w \ndvd p$ is a place of $F^+$ and $\twid w \dvd w$ is a place of $F$ then $\WD(\rho_{\pi,\twid w}) = \rec(\pi_w^{\twid w}\abs{\det}^{1-n\over 2})$, where $\pi_w^{\twid w}$ is either the irreducible smooth representation of $\GL_n(F_{\twid w})$ associated to $\pi_w$ via the isomorphism $\mathbf G(F_w^+)\simeq \GL_n(F_{\twid w})$ if $w$ is split in $F$, or $\pi_w^{\twid w}$ is the base change (see \cite{Minguez-UnramifiedUnitaryGroups}, for example) from $G(F_w^+)$ to $\GL_n(F_{\twid w})$ of the necessarily unramified representation $\pi_w$ of $G(F_w^+)$ if $w$ is inert (and thus $w \nin S$).

% $\pi$ is unramified at a split place $w \ndvd p$ of $F^+$ and $\twid w \dvd w$ is a place of $F$ above $w$ then $\rho_{\pi,\twid w}$ is unramified and $\WD(\rho_{\pi,\twid w}) = \rec(\pi_w\abs{\det}^{1-n\over 2})$, where we view the irreducible admissible representation $\pi_w$ of $\bf G(F_w^+)$ as an irreducible admissible representation of $\GL_n(F_w^+)$ via the isomorphism $\bf G(F_w^+) \simeq \GL_n(F_w^+)$ via $\twid w$.
 \label{lcg-notp}
\item If $v$ is a $p$-adic place of $F^+$ and $\twid v \dvd v$ is a place of $F$ then the representation $\rho_{\pi,\twid v}$ is potentially semi-stable and $\WD(\rho_{\pi,\twid v}) = \rec(\pi_v^{\twid v}\abs{\det}^{1-n\over 2})$ (with the notation  as above). If $\pi_v$ is unramified then $\rho_{\pi,\twid v}$ is crystalline.

The Hodge--Tate weights are as follows. If $\tau : F_{\twid v} \inject \bar \Q_p$ is an embedding then the natural equality $F_{v}^+ = F_{\twid v}$ defines a pair $(v,\tau)$ of a $p$-adic place $v$ of $F^+$ together with an embedding $\tau:F_v^+ \inject \bar \Q_p$. Then, the $\tau$-Hodge--Tate weights of $\rho_{\pi,\twid v}$ are given by $h_{i,\twid v,\tau} = - k_{i,v,\tau} + i-1$.\label{lcg-p}
\end{enumerate}
Generally, the representations $\rho_{\pi}$ are constructed in two steps. The first, requiring that $\bf G$ is quasi-split at each finite place, is to apply the base change theorems of Labesse to \cite{Labesse-BaseChangeUnitary} to transfer to automorphic representations for ${\GL_n}_{/F}$. (We used that $F/F^+$ is everywhere unramified to not have to address ramified primes in (LCG-\ref{lcg-notp}).)

The second step is the vast collection of works on constructing Galois representations for regular algebraic essentially conjugate self-dual representations of ${\GL_n}_{/F}$ along with their local properties. See \cite{Shin-ConstructionGaloisReps, ChenevierHarris-Construction} for further references (along with \cite{BLGGT-LGClequalp2, Caraiani-LGC,Caraiani-MonodromyLGC} for the various compatibilities, especially the compatibility at $w \in S$ with $w \ndvd p$ given by \cite{Caraiani-LGC}).

\subsection{Refinements and eigenvarieties}\label{subsec:eigenvarieties}
An eigenvariety $p$-adically interpolates automorphic representations $\pi$ for $\bf G$, together with triangulations of the corresponding crystalline Galois representation (or orderings of crystalline eigenvalues). 

Let us be more precise. Write $\scr H(U^p)^{\sph}$ for the spherical Hecke algebra of tame level $U^p$. For the next two paragraphs, fix an automorphic representation $\pi$ for $\bf G$ of tame level $U^p$. Then, $\pi$ naturally gives rise to a ring homomorphism $\lambda_{\pi} : \scr H(U^p)^{\sph} \goto \bar \Q_p$. By (LCG-\ref{lcg-notp}) and the Cebotarev density theorem, $\rho_{\pi}$ is determined by $\lambda_{\pi}$. 

For each $p$-adic place $v$ of $F^+$ we now {\em choose} a distinguished place $\twid v \dvd v$ in $F$.\footnote{This choice is ultimately inconsequential. See the remark preceding Proposition \ref{prop:galois-surjection}.} Thus we have an identification $\bf G(F_v^+) = \GL_n(F_v^+)$ that remains set throughout the rest of this section. We let $T(F_v^+)$, respectively $B(F_v^+)$,  denote the subgroups of $\bf G(F_v^+)$ corresponding to the diagonal matrices, respectively the Borel subgroup of upper triangular matrices, and also the actual subgroups of $\GL_n(F_v^+)$.

If $v$ is a $p$-adic place of $F^+$ and $\pi_v$ is unramified then we may also {\em choose} a smooth unramified character $\vartheta_{\twid v} : T(F_v^+) \goto \bar \Q_p^\times$ such that $\pi_v \inject \Ind_{B(F_v^+)}^{\GL_n(F_v^+)}(\delta_{B(F_v^+)}^{1/2}\vartheta_{\twid v})$ (recall the notations from Section \ref{subsec:notations}). Note that $\vartheta_{\twid v}$ actually depends on $\twid v$, as the identification of $\GL_n(F_v^+)$ with $\G(F_v^+)$ does. Following \cite[Chapter 6]{BellaicheChenevier-Book}, we call $\vartheta_{\twid v}$ an accessible refinement for $\pi_v$. We also denote by $\psi_{\twid v}: T(F_v^+) \goto \bar \Q_p^\times$ the highest weight of the unique irreducible algebraic representation $\Res_{F_v^+/\Q_p} \GL_n$ over $\bar \Q_p$ with  weight given by the tuple $(k_{i,v,\tau})_{i,\tau}$. Explicitly, if $z \in (F_v^+)^\times$ then $\psi_{\twid v}(\diag(1,1,\dotsc,z,\dotsc,1,1)) = \prod_\tau \tau(z)^{k_{i,v,\tau}}$ where $z$ appears in the $i$th spot on the left-hand side of that equation.

Now let $T = \prod_{v} T(F_v^+)$ and $\hat T$ be the rigid analytic space over $\Q_p$ parameterizing continuous character of $T$. Then, for each automorphic representation $\pi$ of tame level $U^p$ which is unramified at the $p$-adic places we have a point $\chi_{\pi} :=  (\delta_{B(F_v^+)}^{-1/2}\psi_{\twid v}\vartheta_{\twid v})_{v \dvd p}$ in $\hat T$, depending on the choice of accessible refinements $\vartheta_{\twid v}$. The eigenvariety $X_{U^p}$ of tame level $U^p$ is coarsely defined as the rigid analytic closure of the points
\begin{equation*}
Z_{\cl} := \set{(\lambda_{\pi},\chi_{\pi})} \ci \Hom(\scr H(U^p)^{\sph},\bar \Q_p) \times \hat T
\end{equation*}
where the collection runs over automorphic representations $\pi$ of tame level $U^p$, unramified at the $p$-adic places, together with the choice of an accessible refinement at each $p$-adic places. We call $Z_{\cl}$ the set of ``classical points". The rigid analytic closure does not literally make sense, but we refer to \cite{Chenevier-pAdicAutomorphicForm, Emerton-InterpolationEigenvariety} for details and precisions on the construction. For the remainder of this section, we summarize the properties that we will need. 

The natural map $\chi: X_{U^p} \goto \hat T$ is written $x \mapsto \chi_x$. By way of comparison with other sources, the map $\chi$ contains the data that may usually be included in the presence of the Atkin--Lehner algebra (in the style of \cite{Chenevier-pAdicAutomorphicForm,BellaicheChenevier-Book} for example).

We briefly observe the role of our normalizations. If $(\lambda_\pi,\chi_\pi)$ is a classical point, and if $v \dvd p$ is a $p$-adic place we write $\chi_{1,v,\pi} \otimes \dotsb \otimes \chi_{n,v,\pi} : T(F_v^+) \goto \bar \Q_p^\times$ for the local component at $v$ of $\chi_\pi$. Then, (LCG-\ref{lcg-p}) implies that the crystalline eigenvalues of $\rho_{\pi,\twid v}$ are given by the list
\begin{equation}\label{eqn:crystalline-eigenvalues}
\left(\chi_{1,v,\pi}(\varpi_v)\cdot \prod_{\tau} \tau(\varpi_v)^{h_{1,v,\tau}}, \dotsc, \chi_{n,v,\pi}(\varpi_v)\cdot \prod_{\tau} \tau(\varpi_v)^{h_{n,v,\tau}}\right)
\end{equation}
for some/any choice of uniformizer $\varpi_v \in F_v^+$. In particular, each classical point (with distinct crystalline eigenvalues) is naturally equipped with triangulations at the $p$-adic places.
  
The classical points $Z_{\cl}$ are Zariski dense and accumulating in $X_{U^p}$ \cite[Theorem 7.3.1(v)]{BellaicheChenevier-Book}. The $p$-adic analytic variation of pseudocharacters may be used to construct a global pseudocharacter $T: G_{F,S} \goto \GL_n(\cal O(X_{U^p}))$ which interpolates $x \mapsto \tr(\rho_x)$ at classical points \cite[Proposition 7.1.1]{Chenevier-pAdicAutomorphicForm}. Specializing $T$ to a point $x$, \cite[Theorem 1(2)]{Taylor-Pseudoreps} also gives a continuous semi-simple representation
\begin{equation*}
\rho_x: G_{F,S} \goto \GL_n(\bar \Q_p)
\end{equation*}
which is conjugate self-dual and satisfies apparent compatibility over $X_{U^p}$ at unramified places (by interpolation). If $x \in X$ we let $L(x)$ denote its residue field.
\begin{lemma}\label{lemma:irreducible-globalize}
If $x \in X_{U^p}$ and $\rho_{x}$ is absolutely irreducible and defined over $L(x)$ then there exists an affinoid neighborhood $Y = \Sp(B) \ci X$ and a continuous representation $\rho_Y: G_{F,S} \goto \GL_n(B)$ such that $\rho_Y\tensor_{B} L(y) = \rho_y$ for all $y \in Y$.
\end{lemma}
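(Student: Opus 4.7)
The plan is to apply the standard globalization of a pseudocharacter to a genuine representation in the presence of absolute irreducibility at the base point. The key input is already available just before the lemma: a global pseudocharacter $T: G_{F,S} \to \cal O(X_{U^p})$ whose specialization at $x$ is $\tr(\rho_x)$.

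First, because $\rho_x$ is absolutely irreducible and defined over $L(x)$, Burnside's theorem furnishes elements $g_1,\dotsc,g_{n^2} \in G_{F,S}$ whose images under $\rho_x$ form an $L(x)$-basis of $M_n(L(x))$. Equivalently, the $n^2 \times n^2$ ``Gram-like'' matrix $M = \bigl(T(g_i g_j)\bigr)_{i,j} \in M_{n^2}(\cal O(X_{U^p}))$ has nonvanishing determinant at $x$. Since non-vanishing of a single analytic function is an open condition, after choosing any affinoid neighborhood of $x$ and shrinking, we obtain an affinoid neighborhood $Y = \Sp(B)$ of $x$ on which $\det(M) \in B^\times$.

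Second, I would invoke the classical pseudocharacter-to-representation result (Rouquier, Nyssen, or equivalently \cite[Theorem 1.4.4]{BellaicheChenevier-Book} applied to $T|_Y: G_{F,S} \to B$ with the witnesses $g_1,\dotsc,g_{n^2}$). The invertibility of $M$ over $B$ together with the pseudocharacter identities produce a unique continuous representation $\rho_Y: G_{F,S} \to \GL_n(B)$ with $\tr \rho_Y = T|_Y$ and $\rho_Y \tensor_B L(x) \simeq \rho_x$; concretely $\rho_Y(g)$ is determined by the coefficients expressing the trace pairings $T(g g_i)$ in terms of the basis, as in Taylor's original argument. Continuity of $\rho_Y$ follows from continuity of $T$ via the explicit formula.

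Finally, for any $y \in Y$, the specialization $\rho_Y \tensor_B L(y)$ is a continuous representation with trace equal to $T(y) = \tr(\rho_y)$. Since $\rho_y$ is semi-simple by definition and semi-simple representations in characteristic zero are determined by their traces, one obtains $(\rho_Y\tensor_B L(y))^{\oper{ss}} \simeq \rho_y$, which is the content of the desired equality $\rho_Y\tensor_B L(y) = \rho_y$ in the lemma (and is literal equality at $y = x$, where $\rho_x$ is already irreducible). The only genuinely technical step is the pseudocharacter lifting, but absolute irreducibility at $x$ plus openness of the Gram-matrix condition make this a formal consequence of the classical theory; the rest is bookkeeping.
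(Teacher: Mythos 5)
Your strategy is sensible and close in spirit to the paper's, but there is a genuine gap at the step where you pass from the pseudocharacter $T|_Y$ to a representation $\rho_Y : G_{F,S} \to \GL_n(B)$ over the affinoid algebra $B$. Over a non-local ring, the results you cite (Rouquier, Nyssen, Taylor, or \cite[Thm.~1.4.4]{BellaicheChenevier-Book}) do not produce a $\GL_n(B)$-valued representation; what they produce, under your Gram-matrix hypothesis, is a surjection $B[G_{F,S}] \twoheadrightarrow R$ where $R$ is an Azumaya $B$-algebra of rank $n^2$, together with a map $G \to R^\times$. The ``explicit formula'' you describe recovers the structure constants of $R$ in the basis $g_1,\dotsc,g_{n^2}$, but it does not hand you a matrix realization: nothing in the data chooses an isomorphism $R \simeq M_n(B)$, and such an isomorphism need not exist globally over an arbitrary affinoid even if it exists at the point $x$. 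Absolute irreducibility of $\rho_x$ tells you $R \otimes_B L(x) \simeq M_n(L(x))$, i.e.\ the Azumaya algebra is split at $x$, but splitness at a point does not immediately propagate to the whole affinoid.

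The paper avoids this exactly by working first over the rigid local ring $A = \cal O_{X,x}^{\rig}$, which is Henselian; over a Henselian local ring, an Azumaya algebra that is split modulo the maximal ideal is split, and this is the content of \cite[Cor.~5.2]{Rouqier-Jalgebra96-Pseudocharacters}, giving a $\GL_n(A)$-valued lift $\rho_A$. One then spreads out the matrix entries of $\rho_A$ (finitely many elements of $A = \varinjlim_{V\ni x}\cal O(V)$) to an affinoid $Y$, which is what \cite[Lemma~4.3.7]{BellaicheChenevier-Book} accomplishes. Your proof skips the Henselian intermediary entirely, which is precisely where the splitting is obtained. Two smaller remarks: (i) your Gram-determinant condition does in fact force $\rho_y$ to be absolutely irreducible for every $y \in Y$ (the $\rho_y(g_i)$ are then linearly independent, hence span $M_n(L(y))$ by Burnside), so once $\rho_Y$ exists the final step $\rho_Y \otimes_B L(y) = \rho_y$ needs no passage to semi-simplification; (ii) you should justify that $T(gg')$ makes sense, i.e.\ that the pseudocharacter extends linearly to $\cal O(X_{U^p})[G_{F,S}]$, which is routine but worth a word.
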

\begin{proof}
Let $A$ be the rigid local ring $A = {\cal O}^{\rig}_{X,x}$. Since $A$ is Henselian (\cite[Theorem 2.1.5]{Bekovich-EtaleCohomology}) and $\rho_x$ is absolutely irreducible, $\rho_x$ lifts uniquely to a continuous representation $\rho_A:G_{F,S} \goto \GL_n(A)$ \cite[Corollarie 5.2]{Rouqier-Jalgebra96-Pseudocharacters}. The existence of $Y = \Sp(B)$ and $\rho_Y$ such that $\tr(\rho_y) = \tr(\rho_Y\tensor_{B} L(y))$ for all $y \in Y$ follows from \cite[Lemma 4.3.7]{BellaicheChenevier-Book}. But, the locus of points $y \in Y$ for which $\rho_y$ is absolutely irreducible is a rigid open subspace (\cite[Section 7.2.1]{Chenevier-pAdicAutomorphicForm}), so we can shrink $Y$ and assume that $\rho_y$ is absolutely irreducible for all $y$. In that case the equality $\tr(\rho_y)  = \tr(\rho_Y\tensor_B L(y))$ implies that $\rho_y = \rho_Y\tensor_B L(y)$, finishing the proof.
\end{proof}
\begin{remark}
The representations $\rho_x$ for classical $x$ satisfy the hypothesis of the lemma, i.e. they are defined over their residue fields.
\end{remark}

Instead of working with the whole tame level $U^p$ eigenvariety, we will instead consider a minimal eigenvariety. We briefly explain, but see \cite[Example 7.5.1]{BellaicheChenevier-Book} for further information (see also \cite[Section 3.6]{Chenevier-Eigenvariety}). Fix an automorphic representation $\pi$, unramified at the $p$-adic places, and the choice of an accessible refinement giving rise to a point $x_{\pi} \in X_{U^p}$.  The places $w \in S$ with $w \ndvd p$ are all split. Fix a choice $\twid w \dvd w$ for each such $w$. Then the representation $\pi_w^{\twid w}$ of $\G(F_w^+) \simeq \GL_n(F_w^+)$ has a $K$-type (\cite[Section 6.5]{BellaicheChenevier-Book}), which gives rise to idempotents  $e_w$ (independent of $\twid w$) commuting with $\scr H(U^p)^{\sph}$ inside the space of compactly supported continuous complex-valued functions $\scr C_c^0(\G(\Af_{F^+}^p))$. The minimal eigenvariety $X$ for $x = x_{\pi}$ is then the idempotent-type eigenvariety (\cite[Section 7.3]{BellaicheChenevier-Book}) obtained from the idempotents $(e_w)_{w \in S, w \ndvd p}$. This defines a closed rigid subvariety $X \inject X_{U^p}$. The corresponding classical points are those $(\lambda_{\pi},\chi_{\pi}) \in Z_{\cl}$ above such that $e_w(\pi)\neq 0$ for $w \in S$, $w \ndvd p$. 

We will need a property of $X$ relating to Galois representations at the ramified places $S$. Let $w \in S$ and $\twid w$ be a place of $F$ above $w$. If $(r,N)$ and $(r',N')$ are two Weil--Deligne representations of the Weil group $W_{F_{\twid w}}$ then we will use the notation $N \prec_{\twid w} N'$ for the ``less monodromy'' notation $N\prec_{I_{\twid w}} N'$ introduced in \cite[Definition 7.8.18]{BellaicheChenevier-Book} and $N\sim_{\twid w} N'$ for the obvious equal version. We note two things: 
\begin{enumerate}[(i)]
\item If $N\sim_{\twid w} N'$ then $\restrict{r}{I_{\twid w}}\simeq \restrict{r'}{I_{\twid w}}$. This is by definition of the relation $\prec$.
\item Let $x \in X_{U^p}$ be classical and $X$ be its minimal eigenvariety. If $z$ is a classical point on $X$ then $N_{z,\twid w} \prec_{\twid w} N_{x,\twid w}$ for all $w \in S$, where $(r_{z,\twid w},N_{z,\twid w}) = \WD(\rho_{z,\twid w})$ is the Weil--Deligne representation associated to $\rho_{z,\twid w}$. This follows from (LCG-\ref{lcg-notp}) and the definition of the idempotents $e_{w}$ (compare with \cite[Section 6.5]{BellaicheChenevier-Book}).
\end{enumerate}
We now summarize the rest of the properties of the minimal eigenvariety:

\begin{proposition}\label{proposition:minimal-eigenvarieties-info}
Let $x \in X_{U^p}$ be a classical point and $X$ its minimal eigenvariety. 
\begin{enumerate}
\item $X$ is equidimensional of dimension $(F^+:\Q)\cdot n$.
\item If $\rho_x$ is absolutely irreducible then there exists a canonical lifting $\hat \rho_x$ to $\cal O_{X,x}^{\rig}$, and for each $\twid w \dvd w$ with $w \in S$ and $w \ndvd p$,  we have $\restrict{\hat{\rho}_x}{I_{\twid w}} \simeq \restrict{\rho_x}{I_{\twid w}} \tensor_{L(x)} {\cal O}_{X,x}^{\rig}$.
\item If $\rho_x$ is absolutely irreducible, $\hat \rho_x$ is as in part (b), and $\scr H_x$ denotes the image of $\scr H(U^p)^{\sph}$ in the local ring $\cal O_{X,x}^{\rig}$ then $\scr H_x$ is contained in the the sub-algebra of $\cal O_{X,x}^{\rig}$ generated by $\tr \wedge^i\hat \rho_{x}(G_{F,S})$ for $i=1,2,\dotsc,n$.
\end{enumerate}
\end{proposition}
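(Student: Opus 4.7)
The plan is to deduce the three statements from the general construction of the minimal eigenvariety together with Lemma \ref{lemma:irreducible-globalize}, the Zariski density of classical points, and local-global compatibility at the ramified primes.

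For part (a), I would appeal directly to the standard construction of eigenvarieties for definite unitary groups (\cite{Chenevier-pAdicAutomorphicForm, Chenevier-Eigenvariety, Emerton-InterpolationEigenvariety, BellaicheChenevier-Book}): the rigid space $X_{U^p}$ is equidimensional of dimension equal to the weight space, namely $(F^+:\Q)\cdot n$, because the underlying module of overconvergent $p$-adic forms is torsion-free over the weight algebra and classical points are Zariski dense. The minimal eigenvariety $X \subset X_{U^p}$ is cut out by the idempotents $e_w$ for $w \in S$, $w \ndvd p$; these commute with the unramified eigenalgebra and carve out a direct summand of the ambient module, so the idempotent-type construction of \cite[Section 7.3]{BellaicheChenevier-Book} gives that $X$ inherits equidimensionality of the same dimension from $X_{U^p}$.

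For part (b), Lemma \ref{lemma:irreducible-globalize} yields an affinoid neighborhood $Y = \Sp(B)$ of $x$ and a continuous $\rho_Y : G_{F,S} \goto \GL_n(B)$ specializing to $\rho_y$ at every $y \in Y$. Taking the colimit over the directed system of such neighborhoods produces the canonical lift $\hat\rho_x : G_{F,S} \goto \GL_n(\cal O_{X,x}^{\rig})$, with uniqueness coming from the Henselian property of the rigid local ring together with Rouquier's theorem (as in the proof of the lemma). For the inertia statement, fix $w \in S$ with $w \ndvd p$ and $\twid w \dvd w$; since the residue characteristic of $w$ is different from $p$, the action of $I_{\twid w}$ on $\hat\rho_x$ factors through a finite quotient. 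By property (ii) preceding the proposition, for every classical $z \in X$ one has $N_{z,\twid w} \prec_{\twid w} N_{x,\twid w}$, and by (i), $\restrict{r_{z,\twid w}}{I_{\twid w}} \simeq \restrict{r_{x,\twid w}}{I_{\twid w}}$. Zariski density of classical points together with the rigidity of continuous representations of a profinite group with finite image over a Henselian local ring then forces $\restrict{\hat\rho_x}{I_{\twid w}} \simeq \restrict{\rho_x}{I_{\twid w}} \tensor_{L(x)} \cal O_{X,x}^{\rig}$.

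For part (c), I recall that at each unramified split place $w = \twid w \twid w^c$, the spherical Hecke algebra is generated by Satake operators $T_{\twid w}^{(i)}$, $i = 1, \dotsc, n$, which by (LCG-\ref{lcg-notp}) and the normalization conventions of Section \ref{subsec:notations} act on each classical specialization $\rho_\pi$ via $\tr \wedge^i \rho_\pi(\Frob_{\twid w})$ up to an explicit normalizing power of a uniformizer. Interpolating over the Zariski dense set of classical points, the image in $\cal O_{X,x}^{\rig}$ of each $T_{\twid w}^{(i)}$ is exactly $\tr \wedge^i \hat\rho_x(\Frob_{\twid w})$. The Chebotarev density theorem applied to $\hat\rho_x$, together with continuity, then implies that these elements generate the same closed sub-algebra of $\cal O_{X,x}^{\rig}$ as $\tr \wedge^i \hat\rho_x(G_{F,S})$, proving the inclusion. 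The main obstacle is the inertia-invariance in part (b): upgrading the isomorphism $\restrict{r_{z,\twid w}}{I_{\twid w}} \simeq \restrict{r_{x,\twid w}}{I_{\twid w}}$ available at classical points to a genuine isomorphism of families on $\cal O_{X,x}^{\rig}$ crucially uses that $X$ is \emph{minimal}, so that the $K$-type idempotents pin down the inertial Weil--Deligne data at the ramified primes and convert $\prec_{\twid w}$ into $\sim_{\twid w}$ along the whole neighborhood.
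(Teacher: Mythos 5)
Parts (a) and (c) of your proposal match the paper's argument: (a) is the same appeal to the general equidimensionality result for idempotent-type unitary eigenvarieties, and (c) is the same Satake/Chebotarev interpolation of the unramified Hecke action using (LCG-II) at places away from $S$.

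Part (b) has a genuine gap. You assert that ``since the residue characteristic of $w$ is different from $p$, the action of $I_{\twid w}$ on $\hat\rho_x$ factors through a finite quotient.'' This is false in general: for $\ell\neq p$ Grothendieck's monodromy theorem only gives quasi-unipotence, and the tame inertia acts via $\exp(t_p(\cdot)N)$ where $N$ is the (possibly nonzero) monodromy operator. The entire difficulty in part (b) is controlling $N$ as one deforms away from $x$, which is exactly why the minimal eigenvariety enters. You also jump from ``$N_{z,\twid w}\prec_{\twid w}N_{x,\twid w}$ for classical $z$'' to ``$\restrict{r_{z,\twid w}}{I_{\twid w}}\simeq \restrict{r_{x,\twid w}}{I_{\twid w}}$'' by citing observation (i); but (i) only produces an isomorphism of inertial restrictions when $\sim_{\twid w}$ holds, not merely $\prec_{\twid w}$. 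More importantly, neither of these pointwise facts gives the isomorphism \emph{over the local ring} $\cal O_{X,x}^{\rig}$ that the proposition asserts. The paper's proof does this work explicitly: it passes to the total ring of fractions $\cal K_x=\prod\cal K_{s(x)}$, constructs the generic Weil--Deligne representation $(r^{\gen}_{s(x),\twid w},N^{\gen}_{s(x),\twid w})$ on each irreducible component, uses that $N_{z,\twid w}\prec_{\twid w}N^{\gen}_{s(x),\twid w}$ always with equality on a Zariski-dense (accumulating) set of classical $z$ to force $N^{\gen}_{s(x),\twid w}\sim_{\twid w}N_{x,\twid w}$, and then invokes \cite[Corollary 7.5.10]{BellaicheChenevier-Book} to pass from this generic statement to constancy over $\cal O_{X,x}^{\rig}$. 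Your closing sentence gestures at this upgrade from $\prec$ to $\sim$, but it is asserted rather than argued, and the step from a statement about classical fibers to a statement over the completed local ring is where the real content lies.
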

\begin{proof}
The statement (a) is a general property of eigenvarieties of idempotent-type attached to definite unitary groups \cite[Theorem 7.3.1(a)]{BellaicheChenevier-Book}. 

The lifting in (b) is deduced from Lemma \ref{lemma:irreducible-globalize}. The constancy of inertia acting is deduced from \cite[Corollary 7.5.10]{BellaicheChenevier-Book}. This is explained in the proof of  \cite[Proposition 7.6.10]{BellaicheChenevier-Book}, but since it is where we use the minimal eigenvariety (specifically comment (ii) above), we include the argument for convenience. Let $\cal K_x$ be the total ring of fractions of $\cal O_{X,x}^{\rig}$ and write $\cal K_x = \prod \cal K_{s(x)}$ where each $\cal K_{s(x)}$ is a field corresponding to an irreducible component $s(x)$. Let $\rho_x^{\gen}: G_{F,S} \goto \GL_n(\cal K_{s(x)})$ be the corresponding Galois representation and $\rho_{s(x),\twid w}^{\gen}$ be its restriction to the local group $G_{F_{\twid w}}$.  This representation admits a Weil--Deligne representation $(r_{s(x),\twid w}^{\gen},N_{s(x),\twid w}^{\gen})$ (see \cite[Section 7.8.4]{BellaicheChenevier-Book}). In general, $N_{z,\twid w} \prec_{\twid w} N_{s(x),\twid w}^{\gen}$ for all $z$ on $s(x)$ with $\sim_{\twid w}$ on a Zariski-dense subset. For instance, $N_{z,\twid w} \sim_{\twid w} N_{s(x),\twid w}^{\gen}$ for a set of classical $z$ on $s(x)$ accumulating at $x$. Thus (ii) above implies $N_{s(x),\twid w}^{\gen} \prec_{\twid w} N_{x,\twid w}$. The reverse is always true, so we conclude $N_{s(x),\twid w}^{\gen} \sim_{\twid w} N_{x,\twid w}$. Since $s(x)$ is arbitrary, we see that the hypothesis of \cite[Corollary 7.5.10]{BellaicheChenevier-Book} is satisfied.\footnote{The embedded reference to Proposition 7.5.8 in {\em loc. cit} is valid because we've assume that each place of $S$ splits in $F$.} We conclude that $\hat N_{x,\twid w} \sim_{\twid w} N_{x,\twid w}$ where $(\hat r_{x,\twid w}, \hat N_{x,\twid w})$ is the Weil--Deligne representation associated to $\hat \rho_{x,\twid w}$ (see the references just prior to \cite[Corollary 7.5.10]{BellaicheChenevier-Book}). By definition then $\restrict{\hat \rho_x}{I_{F_{\twid w}}} \simeq \restrict{\rho_x}{I_{F_{\twid w}}} \tensor_{L(x)} \cal O_{X,x}^{\rig}$ (see (i) above).

Part (c) follows from applying (LCG-\ref{lcg-notp}) to the places $w \nin S$. Indeed, the Satake isomorphism and (LCG-\ref{lcg-notp}) implies that $\scr H(U^p)^{\sph}$ is generated as an algebra by elements whose specialization at any classical point $z$ are the coefficients of the characteristic polynomial of $\rho_{z,\twid w}(\Frob_{\twid w})$. In particular, the image $\scr H_x$ in $\cal O_{X,x}^{\rig}$ is naturally generated by the characteristic polynomials of $\hat \rho_x(\Frob_w)$ with $w \nin S$.
\end{proof}

\subsection{Upper bounds for tangent spaces via deformation theory}\label{subsec:ub}
We continue to use the notations and conventions of the previous two sections. We also assume:
\begin{enumerate}[(\ref{subsec:ub}-A)]
\item  $\pi$ is an automorphic representation of tame level $U^p$, unramified above $p$, and $(\vartheta_{\twid v})_{\twid v}$ is a list of accessible refinements for the representations $\pi_v$ at the $p$-adic places $v$ of $F^+$. \label{ass-point}
\item The global Galois representation $\rho_{\pi}$ is absolutely irreducible. \label{ass-irred}
\item By (LCG-\ref{lcg-p}) the choices in (\ref{subsec:ub}-\ref{ass-point}) define orderings of the crystalline eigenvalues for each $\rho_{\pi,\twid v}$ (see \eqref{eqn:crystalline-eigenvalues}). We {\em assume} that these eigenvalues are all distinct and denote by $P_{\twid v,\bullet}$ the corresponding triangulation of the $(\varphi,\Gamma_{F_v^+})$-module $D_{\rig}^{\dagger}(\rho_{\pi,\twid v})$. \label{ass-distinct}
\item We {\em further} assume that $P_{\twid v,\bullet}$ is regular generic for each $v \dvd p$. \label{ass-generic}
\end{enumerate}
We now denote by $X \subset X_{U^p}$ the minimal eigenvariety containing the point $x = x_{\pi}$. By assumption (\ref{subsec:ub}-\ref{ass-irred}), $\rho_{x}$ is absolutely irreducible and so the  universal deformation functor $\fr X_{\rho_{x}}$ on $\fr{AR}_{L(x)}$ is (pro-)representable by a complete local noetherian $L(x)$-algebra $R_{\rho_{x}}^{\univ}$ with residue field $L(x)$. Denote by $\fr X_{\rho_{x}}^{\oper{csd}} \ci \fr X_{\rho_x}$ the representable subfunctor parameterizing conjugate self-dual deformations $\rho$ (i.e. $\rho^{\perp} \simeq \rho(n-1)$).

At places $\twid w \ndvd p$ of $F$, we may consider the local Galois representation $\rho_{x,\twid w}$ and its universal deformation functor $\fr X_{\rho_{x,\twid w}}$. There is a natural subfunctor $\fr X_{\rho_{x,\twid w},f} \ci \fr X_{\rho_{x,\twid w}}$ parameterizing deformations which are minimally ramified. That is, if $A \in \fr{AR}_{L(x)}$ then
\begin{equation*}
\fr X_{\rho_{x,\twid w},f}(A) = \set{\rho_A \in \fr X_{\rho_{x,\twid w}}(A) \st \restrict{\rho_A}{I_{\twid w}} \simeq \restrict{\rho_{x,\twid w}}{I_{\twid w}}\tensor_{L(x)} A}.
\end{equation*}
For example, if $\twid w \nin S$ then $\rho_A \in \fr X_{\rho_{x,\twid w},f}(A)$ if and only if $\rho_A$ is trivial on inertia at $\twid w$.
The arrow $\fr X_{\rho_{x,\twid w},f} \inject \fr X_{\rho_{x,\twid w}}$ is relatively representable (e.g. by Schlessinger's criterion). 

For $v \dvd p$ in $F^+$, we let $\fr X_{\rho_{x,\twid v}}^{\Ref} := \fr X_{D_{\rig}^{\dagger}(\rho_{x,\twid v}), P_{\twid v,\bullet}}^{\Ref}$ be the weakly-refined deformations of $D_{\rig}^{\dagger}(\rho_{x,\twid v})$ with respect to the triangulation $P_{\twid v,\bullet}$ arising from (\ref{subsec:ub}-\ref{ass-distinct}). We now let $\fr X_{\rho_{x}}^{\Ref,\min}$ denote the fibered product defined by the diagram
\begin{equation*}
\xymatrix{
\fr X_{\rho_{x}}^{\Ref,\min} \ar[d] \ar[r] & \fr X_{\rho_{x}}^{\oper{csd}} \ar[d] \\
 \prod_{\twid w \ndvd p} \fr X_{\rho_{\pi,\twid w},f} \times \prod_{v \dvd p} \fr X_{\rho_{x,\twid v}}^{\Ref} \ar[r] & \prod_{\twid w \ndvd p} \fr X_{\rho_{x,\twid w}} \times \prod_{v \dvd p} \fr X_{\rho_{x,\twid v}}.
}
\end{equation*}
Since the bottom arrow is relatively representable and $\fr X_{\rho_{x}}^{\oper{csd}}$ is itself (pro-)representable, we deduce that $\fr X_{\rho_{x}}^{\Ref,\min}$ is a (pro-)representable functor on $\fr{AR}_{L(x)}$. We denote the universal ring representing $\fr X_{\rho_x}^{\Ref,\min}$ by $R_{\rho_{x}}^{\Ref,\min}$.
\begin{remark}
The conjugate self-dual property of the deformations in $\fr X_{\rho_x}^{\Ref,\min}$ implies that it is sufficient to consider only one place of $F$ above each place of $F^+$. This is why, for example, we only specify the deformation problem at the fixed $p$-adic places $\twid v$ of $F$.
\end{remark}

Let $\cal O_{X,x}^{\rig}$ denote the rigid analytic local ring of $x$ at $X$ and $\hat{\cal O}_{X,x}^{\rig}$ denote its completion.
\begin{proposition}\label{prop:galois-surjection}
There is a canonical surjective ring homomorphism $f: R_{\rho_{x}}^{\Ref,\min} \surject \hat{\cal O}_{X,x}^{\rig}$.
\end{proposition}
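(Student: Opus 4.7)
The plan is to construct a deformation of $\rho_x$ over $\hat{\cal O}_{X,x}^{\rig}$ that is classified by the functor $\fr X_{\rho_x}^{\Ref,\min}$, invoke universality to obtain $f$, and then establish surjectivity from the fact that $\hat{\cal O}_{X,x}^{\rig}$ is generated by Hecke operators together with coordinates on weight space. The starting point is Proposition \ref{proposition:minimal-eigenvarieties-info}(b), which supplies a canonical lift $\hat\rho_x: G_{F,S} \goto \GL_n(\cal O_{X,x}^{\rig})$ of $\rho_x$; base-changing along the completion map $\cal O_{X,x}^{\rig} \goto \hat{\cal O}_{X,x}^{\rig}$, $\hat\rho_x$ becomes a compatible system of deformations of $\rho_x$ over the Artinian quotients of $\hat{\cal O}_{X,x}^{\rig}$.

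Next I would verify the three defining conditions of $\fr X_{\rho_x}^{\Ref,\min}$. Conjugate self-duality follows by comparing pseudocharacters: $\tr(\hat\rho_x)$ and $\tr(\hat\rho_x^\perp(n-1))$ agree on the Zariski-dense set of classical points of any affinoid neighborhood of $x$ by (LCG-\ref{lcg-infinite}), hence globally, and absolute irreducibility of $\rho_x$ promotes this equality of traces to an isomorphism $\hat\rho_x^\perp \simeq \hat\rho_x(n-1)$. Minimal ramification at each $\twid w \ndvd p$ is exactly the content of Proposition \ref{proposition:minimal-eigenvarieties-info}(b). The weakly-refined condition at each $v \dvd p$ is the main technical input: by the interpolation of crystalline Frobenius eigenvalues on eigenvarieties due to Kisin \cite{Kisin-OverconvergentModularForms} and Liu \cite{Liu-Triangulations}, the ordering of crystalline eigenvalues cut out by the accessible refinements $\vartheta_{\twid v}$ at classical points extends, on an appropriate affinoid neighborhood $Y$ of $x$, to a list of sections $\phi_{j,Y} \in \cal O(Y)^\times$ realizing a rank-one $\varphi^{f_v}$-eigenspace with eigenvalue $\phi_{1,Y}\dotsb\phi_{j,Y}$ in each $D_{\cris}^+(\wedge^j D_{\rig}^\dagger(\hat\rho_{x,\twid v}))$; after completion at $x$, regularity of $P_{\twid v,\bullet}$ together with the simple-eigenvalue formalism of Section \ref{subsec:deform-crys} makes this compatible with the definition \eqref{eqn:HT-defs} of the weakly-refined functor.

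With $\hat\rho_x$ defining a compatible system of points in $\fr X_{\rho_x}^{\Ref,\min}$, universality yields the map $f: R_{\rho_x}^{\Ref,\min} \goto \hat{\cal O}_{X,x}^{\rig}$. For surjectivity, recall that $X$ is constructed as a closed subvariety of $\Hom(\scr H(U^p)^{\sph},\bar\Q_p) \times \hat T$, so $\hat{\cal O}_{X,x}^{\rig}$ is topologically generated over $L(x)$ by the image of $\scr H(U^p)^{\sph}$ together with the pullback of a system of coordinates on $\hat T$ at $\chi_x$. Proposition \ref{proposition:minimal-eigenvarieties-info}(c) places the image of $\scr H(U^p)^{\sph}$ inside the sub-algebra generated by the traces of $\wedge^i \hat\rho_x$, which all lie in the image of $f$. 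The $\hat T$-coordinates at each $v \dvd p$ are the components of the deformed characters of $T(F_v^+)$: on $\cal O_{F_v^+}^\times$ they are determined by the deformed Hodge--Sen--Tate weights of $\hat\rho_{x,\twid v}$, and on a uniformizer they are recovered from the refinement eigenvalues $\phi_{j,A}$ and those same weights via \eqref{eqn:crystalline-eigenvalues}. Both kinds of data are in the image of $f$ by the weakly-refined datum established above, so $f$ is surjective.

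The principal obstacle is packaging Kisin--Liu: their interpolation statement lives naturally as a family of triangulations (or eigenvalue orderings) on an affinoid neighborhood of $x$, and one must transfer it into a deformation-theoretic statement about the functor $\fr X_{\rho_x}^{\Ref}$ of Section \ref{subsec:weakly-refined-deformations} applied to the completed local ring, while faithfully tracking the ordering $P_{\twid v,\bullet}$ prescribed by the choice of accessible refinements.
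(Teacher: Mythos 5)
Your proposal follows essentially the same route as the paper: lift $\rho_x$ to $\hat{\cal O}_{X,x}^{\rig}$ via Proposition \ref{proposition:minimal-eigenvarieties-info}(b), verify the conjugate self-dual, minimally ramified, and weakly-refined conditions (the last via the Kisin--Liu interpolation of crystalline periods over an affinoid, using regular genericity to upgrade ``generic rank one'' to ``free of rank one satisfying base change''), and then deduce surjectivity from Proposition \ref{proposition:minimal-eigenvarieties-info}(c) together with the fact that the $\hat T$-coordinates are encoded in the universal weakly-refined deformation data. The paper dispatches conjugate self-duality by simply citing that each $\rho_y$ is conjugate self-dual by construction of the family, whereas you re-derive it from pseudocharacters and density of classical points (a fine, if slightly longer, argument), and the paper's surjectivity step is made precise by comparing universal characters $\Delta_{\univ,j,v}, \eta_{\univ,j,v}$ with $\Delta_{j,v}, \eta_{j,v}$ modulo cofinite-length ideals and invoking Krull intersection, which your sketch gestures at but does not spell out.
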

\begin{proof}
Since $\rho_x$ is absolutely irreducible, Proposition \ref{proposition:minimal-eigenvarieties-info}(b) implies that we can canonically lift $\rho_x$ to a deformation $\hat{\rho}_x: G_{F,S} \goto \GL_n(\cal O_{X,x}^{\rig}) \subset \GL_n(\hat{\cal O}_{X,x}^{\rig})$. Since $\hat{\cal O}_{X,x}^{\rig}$ is a complete local Noetherian $L(x)$-algebra with residue field $L(x)$, this defines a map $f: R_{\rho_{x}}^{\univ} \goto \hat{\cal O}_{X,x}^{\rig}$. To show that $f$ factors through $R_{\rho_x}^{\Ref,\min}$, we need to show that $\hat \rho_x$ defines a point in the subfunctor $\fr X_{\rho_x}^{\Ref,\min} \ci \fr X_{\rho_x}$.  For that, we need to check:
\begin{enumerate}[(i)]
\item $\hat \rho_x$ is conjugate self-dual.
\item $\hat \rho_x$ is minimally ramified away from $p$.
\item If $v \dvd p$ then the representation $\hat \rho_{x,\twid v}$ is a weakly-refined deformation of $\rho_{x,\twid v}$.
\end{enumerate}
The first point is clear because the Galois representation $\rho_{y}$ is conjugate self-dual for all $y \in X$. The condition (ii) follows from Proposition \ref{proposition:minimal-eigenvarieties-info}(b), which is valid by assumption (\ref{subsec:ub}-\ref{ass-irred}) and the minimality of $X$. The point (iii) is the crucial $p$-adic interpolation of crystalline eigenvalues over eigenvarieties as we now explain  (see \cite{Kisin-OverconvergentModularForms, BellaicheChenevier-Book, Liu-Triangulations,KedlayaPottharstXiao-Finiteness}).

Let $Y$ be an affinoid open of $X$, containing $x$, as in Lemma \ref{lemma:irreducible-globalize}. Fix a place $v \dvd p$ in $F^+$ and form the family $D_{\rig}^{\dagger}(\rho_{Y,\twid v})$ of $(\varphi,\Gamma_K)$-modules over $Y$ (see \cite{KedlayaLiu-FamiliesofPhiGammaModules}). Consider the natural map $\chi: Y \goto\hat T$ as the universal character $\chi: T \goto \cal O(Y)^\times$.\footnote{Here and below we are using $\cal O(-)$ to denote the ring of rigid analytic functions on a rigid space.} Write $\chi_v: T(F_v^+) \goto \cal O(Y)^\times$ for the local component at $v$, and we further write $\chi_v  = \chi_{1,v}\tensor \dotsb \tensor \chi_{n,v}$, with each $\chi_{j,v}$ a character of $(F_v^+)^\times$. Set $\Delta_{j,v} := \chi_{1,v}\dotsb \chi_{j,v} : (F_v^+)^\times \goto \cal O(Y)^\times$.

Choose a uniformizer $\varpi_{F_v^+} \in (F_v^+)^\times$ and use $\varpi_{F_v^+}$ to write $({F_v^+})^\times = \varpi_{F_v^+}^{\Z} \times \cal O_{{F_v^+}}^\times$. Let $\eta_{j,v} := \restrict{\Delta_{j,v}}{\cal O_{F_v^+}^\times}$, but then use the same notation to denote what we called $(\eta_{j,v})_{\varpi_K}$ in Section \ref{subsec:algebraic-char-deformations}, i.e. set $\eta_{j,v}(\varpi_K) = 1$. With these notations, \cite{Liu-Triangulations} implies that for $j=1,\dotsc, n$ the space $D_{\crys}^+(\wedge^jD_{\rig}^{\dagger}(\rho_{Y,\twid v})(\eta_{j,v}^{-1}))^{\varphi^{f_K} = \Delta_{j,v}(\varpi_{F_v^+})}$ is a coherent sheaf of generic rank one on $Y$ (see the comments preceding \cite[Theorem 0.3.4]{Liu-Triangulations} and note the consistency with \eqref{eqn:crystalline-eigenvalues}). 

We can say more. By assumption (\ref{subsec:ub}-\ref{ass-generic}), the triangulations $P_{\twid v,\bullet}$ at the point $x$ are all regular generic and thus \cite[Proposition 4.3.5]{Liu-Triangulations} implies that after shrinking $Y$ we may assume that each $D_{\crys}^+(\wedge^jD_{\rig}^{\dagger}(\rho_{Y,\twid v})(\eta_{j,v}^{-1}))^{\varphi^{f_K} = \Delta_{j,v}(\varpi_{F_v^+})}$ is free of rank one, and satisfies base change, on $Y$. In particular, this shows that $\hat \rho_{x,\twid v}$ is a weakly-refined deformation of $\rho_{x,\twid v}$.

We now give the standard argument that the map $f$ is surjective. By compactness of the ring $R_{\rho_{x}}^{\Ref,\min}$, it is enough to see that dense subring ${\cal O}_{X,x}^{\rig} \ci \hat{\cal O}_{X,x}^{\rig}$ is contained in the image of $f$. By construction of $X$, ${\cal O}_{X,x}^{\rig}$ is topologically generated by $\scr H(U^p)^{\sph}$ over $\cal O(\hat T)$ inside $\cal O_{X,x}^{\rig}$ (compare with the proof of \cite[Proposition 7.6.10]{BellaicheChenevier-Book}). The image of the algebra $\scr H(U^p)^{\sph}$ in $\cal O_{X,x}^{\rig}$ is in the image of $f$ by Proposition \ref{proposition:minimal-eigenvarieties-info}(c). 

The image of $\cal O_{\hat T,\chi_x}^{\rig}$ in $\cal O_{X,x}^{\rig}$ is generated by $\set{\Delta_{j,v}, \eta_{j,v}}$ for $v \dvd p$ and $j=1,\dotsc,n$ as above (meaning the values of these characters as function on $Y$). Let $\rho_{x}^{\univ}$ be the universal deformation to $R_{\rho_x}^{\Ref,\min}$. If $I \ci R_{\rho_{x}}^{\Ref,\min}$ is a co-finite length ideal, we apply the definition of being weakly-refined with the choice of uniformizer $\varpi_{F_v^+}$ as above, and deduce that $D_{\crys}^+(\wedge ^j D_{\rig}^{\dagger}(\rho_{x,\twid v}^{\univ}/I)(\eta_{\univ,j,v}^{-1}))^{\varphi^{f_K} = \Delta_{\univ,j,v}(\varpi_{F_{v}^+})}$ is free of rank one over $R_{\rho_{x}}^{\Ref,\min}/I\tensor_{L(x)} (F_v^+)_0$, were $\Delta_{\univ,j,v}$ and $\eta_{\univ,j,v}$ are the universal characters of $(F_v^+)^\times \goto (R_{\rho_x}^{\Ref,\min})^\times$ associated to $\rho_x^{\univ}$. Since $\rho_x^{\univ}$ induces $\hat \rho_x$ under the map $f$, we see that  the corresponding compositions $(F_v^+)^\times \goto (R_{\rho_x}^{\Ref,\min}/I)^\times \goto (\cal O_{X,x}^{\rig}/I\cal O_{X,x}^{\rig})^\times$ are equal to $\Delta_{j,v} \bmod I$ and $\eta_{j,v} \bmod I$, respectively (we are carefully using that  $\eta_{\univ,j,v}$ and $\eta_{j,v}$ are both defined to be trivial on $\varpi_{F_v^+}$). Now we take the limit over $I$ being powers of the maximal ideal in ${\mathcal O}_{X,x}^{\rig}$ and conclude $\set{\Delta_{j,v},\eta_{j,v}}$ are in the image of $f$ by Krull's intersection theorem.
\end{proof}
Now denote by $T_{X,x}$ the tangent space to $X$ at the point $x$. Let $\fr t_{\rho_{x}}^{\Ref,\min}$ denote the tangent space for the ring $R_{\rho_{x}}^{\Ref,\min}$. If $\dim_{L(x)} \fr t_{\rho_{x}}^{\Ref,\min} = g$ then $R_{\rho_{x}}^{\Ref,\min}$ is a quotient of a power series ring $L(x)[[u_1,\dotsc,u_g]]$ in $g$ variables (this goes back to Mazur). 

Since $\rho_x^{\perp} \simeq \rho_x(n-1)$, there is a matrix $A_x \in \GL_n(L(x))$ such that $\rho_x^{\perp}(g) = A_{x} \rho_x(g) A_x^{-1}\chi_{\cycl}^{n-1}(g)$ for all $g \in G_F$. The main theorem of \cite{BellaicheChenevier-Sign}  implies that $A_x$ is symmetric and unique up to scalar because $\rho_x$ is absolutely irreducible. We recall that this allows us to extend the action of $G_F$ on $\ad \rho_x$ to an action of $G_{F^+}$ (see the introduction of \cite{Allen-SelmerGroups}, for example). Specifically, we write $\ad \rho_{x} = \GL(M_n(L(x)))$, and if $c \in G_{F^+}$ denotes the choice of a complex conjugation for $F/F^+$ then we let $c$ act by $B \mapsto - A^{-1}_x\cdot  {}^t \!B \cdot A_x$ for all $B \in M_n(L)$. This is well-defined and completely canonical. Let $H^1_f(\ad\rho_x)^+ = H^1_f(G_{F^+},\ad \rho_x)$ denote the global adjoint  Bloch--Kato Selmer group \cite{BlochKato-TamagawaNumbersOfMotives}.

%The restriction map $\rho_{x} \mapsto (\rho_{x,\twid v})$ naturally defines a morphism of $L(x)$-vector spaces $\fr t_{\rho_{\pi}} \goto \bigoplus_{v \dvd p} \fr t_{\rho_{x,\twid v}}$ where $\fr t_{\rho_{x,\twid v}}$ is the Zariski tangent space to the universal local deformation problem.\footnote{Strictly speaking there is no universal deformation ring, but the tangent space still exists.} 

\begin{lemma}\label{lemma:global-bounded}
The restriction map $\rho_{x} \mapsto (\rho_{x,\twid v})$ induces a natural map  $\fr t_{\rho_{x}}^{\Ref,\min} \goto \bigoplus_{v \dvd p} \fr t_{\rho_{x,\twid v}}^{\Ref}$ and this induces an exact sequence
\begin{equation*}
0 \goto H^1_f(\ad \rho_{x})^+ \goto \fr t_{\rho_{x}}^{\Ref,\min} \goto \bigoplus_{v \dvd p} \fr t_{\rho_{x,\twid v}}^{\Ref}/\fr t_{\rho_{x,\twid v},f}
\end{equation*}
of vector spaces over $L(x)$.
\end{lemma}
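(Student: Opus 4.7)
The plan is to identify $\fr t_{\rho_{x}}^{\Ref,\min}$ with a Selmer-type subspace of $H^1(G_{F^+,S}, \ad \rho_x)$ cut out by local conditions, and then read off the kernel of the proposed map. First I would invoke standard deformation theory: since $\rho_x$ is absolutely irreducible (hypothesis (\ref{subsec:ub}-\ref{ass-irred})), the tangent space of the conjugate self-dual functor $\fr X_{\rho_x}^{\oper{csd}}$ is canonically $H^1(G_{F^+,S}, \ad \rho_x)$, where the $G_{F^+}$-action on $\ad \rho_x$ is the extended action via the matrix $A_x$ from \cite{BellaicheChenevier-Sign} recalled just before the statement. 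This identification proceeds through inflation--restriction for $\Gal(F/F^+)$, using that the invariants $(\ad \rho_x)^{G_{F,S}}$ reduce to the center and that $A_x$ is unique up to scalar. The analogous identifications of the local tangent spaces with local $H^1$'s come from the same story applied to decomposition groups, and the global-to-local restriction of deformations manifestly induces the restriction maps on cohomology.

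Next I would identify the Selmer conditions. At a place $\twid w \ndvd p$, the minimally ramified condition $\restrict{\rho_A}{I_{\twid w}} \simeq \restrict{\rho_{x,\twid w}}{I_{\twid w}} \tensor_{L(x)} A$ translates into the statement that the class in $H^1(G_{F_{\twid w}}, \ad \rho_{x,\twid w})$ restricts trivially to $H^1(I_{\twid w}, \ad \rho_{x,\twid w})$, i.e.\ lies in the Bloch--Kato unramified subspace $H^1_f(G_{F_{\twid w}}, \ad \rho_{x,\twid w})$. At a place $v \dvd p$, the weakly-refined condition carves out $\fr t_{\rho_{x,\twid v}}^{\Ref}$ by definition, and inside it the crystalline tangent space is $\fr t_{\rho_{x,\twid v},f} = H^1_f(G_{F_{\twid v}}, \ad \rho_{x,\twid v})$ by Section \ref{subsec:crystalline-deformations}. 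Because $\fr X_{\rho_x}^{\Ref,\min}$ is defined as a fibered product imposing each local condition relatively, $\fr t_{\rho_x}^{\Ref,\min}$ is precisely the preimage in $H^1(G_{F^+,S},\ad \rho_x)$ of $\prod_{\twid w \ndvd p} H^1_f(G_{F_{\twid w}},\ad \rho_{x,\twid w}) \times \prod_{v \dvd p} \fr t_{\rho_{x,\twid v}}^{\Ref}$ under the product of the local restriction maps. Composing with the projection onto $\bigdsum_{v \dvd p}\fr t_{\rho_{x,\twid v}}^{\Ref}/\fr t_{\rho_{x,\twid v},f}$ yields the map in the statement.

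The kernel of that composition then consists of those global classes that are conjugate self-dual, minimally ramified at every $\twid w \ndvd p$ in $S$, and crystalline at every $v \dvd p$; unwinding the Bloch--Kato definitions this is exactly $H^1_f(G_{F^+}, \ad \rho_x)$, giving the asserted left-exact sequence. The main obstacle I expect is the first step: making the identification $\fr t_{\rho_x}^{\oper{csd}} = H^1(G_{F^+,S}, \ad \rho_x)$ fully rigorous, including checking that the extended $G_{F^+}$-action on $\ad \rho_x$ is indeed the one that pins down conjugate self-duality on tangent vectors and that no residual contribution from $H^1(\Gal(F/F^+), (\ad \rho_x)^{G_{F,S}})$ survives. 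Both of these rely on absolute irreducibility of $\rho_x$ together with uniqueness up to scalar of $A_x$, so nothing substantively new is needed beyond careful bookkeeping.
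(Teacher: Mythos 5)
Your proposal is correct and follows essentially the same route as the paper's proof: identify the conjugate self-dual tangent space with $H^1(G_{F^+},\ad\rho_x)$ via inflation--restriction and the $c$-invariance criterion (using absolute irreducibility and uniqueness of $A_x$ up to scalar, with vanishing of $H^1(\Gal(F/F^+),-)$ in characteristic zero), then observe that the minimal and crystalline local conditions cut out exactly the Bloch--Kato $H^1_f$'s, so the kernel of the map to $\bigoplus_v \fr t_{\rho_{x,\twid v}}^{\Ref}/\fr t_{\rho_{x,\twid v},f}$ is $H^1_f(\ad\rho_x)^+$. The one point you flag as the "main obstacle" (the residual $H^1(\Gal(F/F^+),\cdot)$ term) vanishes automatically over the characteristic-zero field $L(x)$, so there is no actual gap.
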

\begin{proof}
The fact that the natural map exists is clear. Next, $\Gal(F/F^+)=\set{1,c}$ acts on $H^1(G_F,\ad\rho_{x})$ by the paragraph preceding this lemma, and the inflation-restriction sequence implies that $H^1(G_{F^+},\ad\rho_x) = H^1(G_F,\ad\rho_{x})^{c=1}$. It is an elementary calculation that a deformation $\twid \rho \in H^1(G_F,\ad \rho_x)$ of $\rho_x$ to $L(x)[\varepsilon]$ is conjugate self-dual if and only if $\twid \rho \in H^1(G_{F},\ad\rho_x)^{c=1}$. Thus $H^1_f(\ad \rho_x)^+$ is contained in the kernel of the restriction maps. The reverse inclusion follows from the minimal condition since every deformation $\twid \rho \in \fr t_{\rho_x}^{\Ref,\min}$ has the property that the restriction $\twid \rho_{\twid w}$ to a place $\twid w \ndvd p$ lies in $H^1_f(\ad \rho_{x,\twid w}) = \fr t_{\rho_{x,\twid w},f}$ already.
\end{proof}
We now summarize the situation. We have fixed a classical point $x$ on an eigenvariety corresponding to the choices (\ref{subsec:ub}-\ref{ass-point})-(\ref{subsec:ub}-\ref{ass-generic}), and we restricted to the minimal eigenvariety $X$ containing $x$. By (\ref{subsec:ub}-\ref{ass-distinct}) the point $x$ comes naturally equipped with triangulations $P_{\twid v,\bullet}$ of the local Galois representation $D_{\rig}^{\dagger}(\rho_{x,\twid v})$ at a set of distinguished $p$-adic places $\twid v$ of $F$.

\begin{theorem}\label{theorem:main-theorem-text}
With the above notation and assumptions, for each $p$-adic place $v$ of $F^+$ let $(\sigma_{v,\tau})_{\tau}$ be critical type of the triangulation $P_{\twid v,\bullet}$ at the point $x$. Then
\begin{equation*}
\dim_{L(x)} T_{X,x} \leq \dim_{L(x)} \fr t_{\rho_{x}}^{\Ref,\min} \leq \dim_{L(x)} H^1_f(\ad \rho_{x})^+ + \sum_{v \dvd p} \sum_{\tau:F_v^+\goto L(x)} \ell(\sigma_{v,\tau}) + c(\sigma_{v,\tau}).
\end{equation*}
\end{theorem}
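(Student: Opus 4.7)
The proof assembles three ingredients already established in the paper. The first inequality $\dim_{L(x)} T_{X,x} \leq \dim_{L(x)} \fr t_{\rho_{x}}^{\Ref,\min}$ is essentially formal: by Proposition \ref{prop:galois-surjection} there is a surjection $f: R_{\rho_x}^{\Ref,\min} \surject \hat{\cal O}_{X,x}^{\rig}$ of complete local Noetherian $L(x)$-algebras with residue field $L(x)$. Any such surjection induces an injection $\ideal m_{\hat{\cal O}_{X,x}^{\rig}}/\ideal m_{\hat{\cal O}_{X,x}^{\rig}}^2 \inject \ideal m_{R_{\rho_x}^{\Ref,\min}}/\ideal m_{R_{\rho_x}^{\Ref,\min}}^2$ on cotangent spaces, hence a surjection on tangent spaces in the reverse direction, and the rigid analytic tangent space $T_{X,x}$ coincides with the tangent space to $\hat{\cal O}_{X,x}^{\rig}$. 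So the first inequality follows.

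For the second inequality, I would invoke the exact sequence of Lemma \ref{lemma:global-bounded}:
\begin{equation*}
0 \goto H^1_f(\ad \rho_{x})^+ \goto \fr t_{\rho_{x}}^{\Ref,\min} \goto \bigoplus_{v \dvd p} \fr t_{\rho_{x,\twid v}}^{\Ref}/\fr t_{\rho_{x,\twid v},f}.
\end{equation*}
Exactness of this sequence immediately yields
\begin{equation*}
\dim_{L(x)} \fr t_{\rho_{x}}^{\Ref,\min} \leq \dim_{L(x)} H^1_f(\ad \rho_{x})^+ + \sum_{v \dvd p} \dim_{L(x)} \fr t_{\rho_{x,\twid v}}^{\Ref}/\fr t_{\rho_{x,\twid v},f}.
\end{equation*}

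Finally, the local input is Theorem \ref{theorem:best-upper-bound}, which under our hypothesis (\ref{subsec:ub}-\ref{ass-generic}) that each triangulation $P_{\twid v,\bullet}$ is regular generic (and noting the crystalline eigenvalues are distinct by (\ref{subsec:ub}-\ref{ass-distinct}), so Theorem \ref{theorem:best-upper-bound} applies to $D = D_{\rig}^{\dagger}(\rho_{x,\twid v})$), bounds each summand by $\sum_{\tau: F_v^+ \goto L(x)} \ell(\sigma_{v,\tau}) + c(\sigma_{v,\tau})$. Substituting this into the above inequality finishes the proof.

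There is no real obstacle here — the entire content of the theorem has been packaged into Proposition \ref{prop:galois-surjection}, Lemma \ref{lemma:global-bounded}, and Theorem \ref{theorem:best-upper-bound}. The only thing to double check is that the local tangent space $\fr t_{\rho_{x,\twid v}}^{\Ref}$ appearing in the global exact sequence is literally the same object to which Theorem \ref{theorem:best-upper-bound} applies; this is true by the construction in Section \ref{subsec:ub}, where $\fr X_{\rho_{x,\twid v}}^{\Ref}$ is \emph{defined} to be $\fr X_{D_{\rig}^{\dagger}(\rho_{x,\twid v}),P_{\twid v,\bullet}}^{\Ref}$ via the Fontaine equivalence. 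The proof is thus essentially a two-line assembly once all the machinery is in place.
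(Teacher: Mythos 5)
Your proof is correct and follows essentially the same approach as the paper's: both obtain the first inequality from the surjection $f: R_{\rho_x}^{\Ref,\min} \surject \hat{\cal O}_{X,x}^{\rig}$ of Proposition \ref{prop:galois-surjection}, then combine the exact sequence of Lemma \ref{lemma:global-bounded} with the local bound of Theorem \ref{theorem:best-upper-bound} (valid by the regular-generic assumption). The observation that $\fr X_{\rho_{x,\twid v}}^{\Ref}$ is by definition the functor to which Theorem \ref{theorem:best-upper-bound} applies is also made implicitly in the paper.
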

\begin{proof}
The surjection $f: R_{\rho_x}^{\Ref,\min} \surject \hat{\cal O}_{X,x}^{\rig}$ in Proposition \ref{prop:galois-surjection} gives rise to a canonical injection $T_{X,x} \inject \fr t_{\rho_x}^{\Ref,\min}$. This proves the first inequality. For the second inequality, Lemma \ref{lemma:global-bounded} implies that
\begin{equation}\label{eqn:first-bound}
\dim_{L(x)} \fr t_{\rho_x}^{\Ref,\min} \leq \dim_{L(x)} H^1_f(\ad \rho_x)^+ + \sum_{v \dvd p} \dim_{L(x)} \fr t_{\rho_{x,\twid v}}^{\Ref}/\fr t_{\rho_{x,\twid v},f}.
\end{equation}
If $v \dvd p$ then $P_{\twid v,\bullet}$ is regular generic by assumption (\ref{subsec:ub}-\ref{ass-generic}) and so Theorem \ref{theorem:best-upper-bound} implies that 
\begin{equation}\label{eqn:local-bounds}
\dim_{L(x)} \fr t_{\rho_{x,\twid v}}^{\Ref}/\fr t_{\rho_{x,\twid v},f} \leq \sum_{\tau: F_v^+\goto L(x)} \ell(\sigma_{v,\tau}) + c(\sigma_{v,\tau}).
\end{equation}
The second inequality in the statement of the theorem now follows from summing \eqref{eqn:local-bounds} over $v \dvd p$ and inserting it into \eqref{eqn:first-bound}.
\end{proof}

\begin{corollary}\label{corollary:final-corollary}
In the situation of Theorem \ref{theorem:main-theorem-text}, suppose that
\begin{enumerate}
\item $\sigma_{v,\tau}$ is a product of distinct simple transpositions for all $v$ and all $\tau$, and 
\item $H^1_f(\ad \rho_{x})^+ = (0)$.
\end{enumerate}
Then $X$ is smooth at $x$ and the map $f: R_{\rho_x}^{\Ref,\min} \goto \hat{\cal O}_{X,x}^{\rig}$ is an isomorphism.
\end{corollary}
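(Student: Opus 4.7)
The plan is to pinch the inequalities from Theorem \ref{theorem:main-theorem-text} so that they become equalities under hypotheses (a) and (b), which will force both the smoothness at $x$ and the isomorphism $f$. First I would apply Proposition \ref{prop:permutations} to each critical type: under (a), $\ell(\sigma_{v,\tau}) + c(\sigma_{v,\tau}) = n$ for every $p$-adic place $v$ and every embedding $\tau:F_v^+ \inject L(x)$. Summing over $\tau$ and $v \dvd p$ gives $\sum_{v,\tau}(\ell(\sigma_{v,\tau})+c(\sigma_{v,\tau})) = (F^+:\Q)\cdot n$. Combined with (b), Theorem \ref{theorem:main-theorem-text} then yields
\begin{equation*}
\dim_{L(x)} T_{X,x} \;\leq\; \dim_{L(x)} \fr t_{\rho_x}^{\Ref,\min} \;\leq\; (F^+:\Q)\cdot n.
\end{equation*}

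Next I would bring in the equidimensionality of $X$ from Proposition \ref{proposition:minimal-eigenvarieties-info}(a), so that $\hat{\cal O}_{X,x}^{\rig}$ has Krull dimension $(F^+:\Q)\cdot n$. The embedding dimension of any Noetherian local ring is at least its Krull dimension, so automatically $\dim T_{X,x} \geq (F^+:\Q)\cdot n$. Hence the chain above collapses to equalities, $\dim T_{X,x} = \dim \hat{\cal O}_{X,x}^{\rig} = (F^+:\Q)\cdot n$, which is precisely the regularity of $\hat{\cal O}_{X,x}^{\rig}$ and therefore the smoothness of $X$ at $x$.

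Finally I would argue that $f$ is an isomorphism. The equality $\dim \fr t_{\rho_x}^{\Ref,\min} = (F^+:\Q)\cdot n$, together with the surjection of Proposition \ref{prop:galois-surjection}, forces $R_{\rho_x}^{\Ref,\min}$ to have Krull dimension at least $(F^+:\Q)\cdot n$; but it is a quotient of a power series ring in $(F^+:\Q)\cdot n$ variables, so its Krull dimension is exactly that. Embedding dimension equalling Krull dimension makes $R_{\rho_x}^{\Ref,\min}$ regular, in particular a domain, so that $f$ becomes a surjection between regular local rings of the same dimension which induces an isomorphism on tangent spaces; the kernel then sits in the square of the maximal ideal and has height zero, hence vanishes. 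I do not expect a serious obstacle at this stage: the work has already been done in Theorem \ref{theorem:best-upper-bound} and Theorem \ref{theorem:main-theorem-text}, and the remaining argument is the standard numerical-criterion bookkeeping that upgrades an $R\twoheadrightarrow \mathbf T$ surjection of the correct dimension into an isomorphism.
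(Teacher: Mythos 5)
Your proposal is correct and follows essentially the same line as the paper's proof: invoke Proposition~\ref{prop:permutations} to make the critical-type contribution equal $(F^+:\Q)\cdot n$, use the equidimensionality of $X$ from Proposition~\ref{proposition:minimal-eigenvarieties-info}(a) to pin the Krull dimension of $\hat{\cal O}_{X,x}^{\rig}$, and then squeeze the chain of inequalities against the general bound ``embedding dimension $\geq$ Krull dimension'' (Krull's Hauptidealsatz) to get regularity and force the surjection $f$ to be an isomorphism. The paper packages the last step slightly more compactly, deducing smoothness and the isomorphism simultaneously from the fact that $R_{\rho_x}^{\Ref,\min}$ is a quotient of a power series ring in $(F^+:\Q)\cdot n$ variables surjecting onto a ring of the same Krull dimension, but the underlying numerical-criterion argument is the same.
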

\begin{proof}
By Proposition \ref{prop:permutations}, the assumption (a) implies that for each $v$, 
\begin{equation*}
\sum_{\tau: F_v^+ \goto L(x)} \ell(\sigma_{v,\tau}) + c(\sigma_{v,\tau}) = \sum_{\tau: F_v^+\goto L(x)} n = (F_v^+:\Q_p)\cdot n.
\end{equation*}
Thus Theorem \ref{theorem:main-theorem-text} implies that
\begin{equation*}
\dim_{L(x)} T_{X,x} \leq \dim_{L(x)} \fr t_{\rho_{\pi}}^{\Ref,\min} \leq \sum_{v \dvd p} (F_v^+:\Q_p)\cdot n = (F^+:\Q)\cdot n.
\end{equation*}
Let $g = (F^+:\Q)\cdot n$. Then $R_{\rho_{x}}^{\Ref,\min}$ is a quotient of a power series ring over $L(x)$ in $g$ variables. On the other hand, $\hat{\cal O}_{X,x}^{\rig}$ is equidimensional of dimension $g$ by Proposition \ref{proposition:minimal-eigenvarieties-info}(a). Krull's Hauptidealsatz then implies that the surjection $f: R_{\rho_{\pi}}^{\Ref,\min} \surject \hat{\cal O}_{X,x}^{\rig}$ is an isomorphism and that $R_{\rho_{x}}^{\Ref,\min}$ and $\hat{\cal O}_{X,x}^{\rig}$ are both power series rings in $g$ variables.
\end{proof}
We finish with a number of remarks.
\begin{remark}
The technical hypothesis on the vanishing of the Selmer group in Corollary \ref{corollary:final-corollary} is expected to always hold. The current status is discussed following Corollary \ref{corollary:intro-corollary}.
\end{remark}
\begin{remark}
The irreducibility of the global Galois representation $\rho_x$ plays a  role, even if under-emphasized, in the proof of Theorem \ref{theorem:main-theorem-text}. Indeed, if we only take as input the inequality $\dim_{L(x)}T_{X,x} \leq \dim_{L(x)} H^1_{f}(\ad \rho_x)^+ + \sum_{v\dvd p} \dotsb$ in Theorem \ref{theorem:main-theorem-text}, and the assumptions (a) and (b) in Corollary \ref{corollary:final-corollary}, then we would still be able to prove that $\dim_{L(x)}T_{X,x} = (F^+:\Q)\cdot n$, meaning $X$ would be smooth at $x$. But, in \cite{Bellaiche-Nonsmooth}, Bella\"iche has given an example  of a classical point on an eigenvariety whose critical type is a 3-cycle (hence a product of distinct simple transpositions) and which is a singular point on every irreducible component it lies on. But, $\rho_x$ is a direct sum of two representations in this case.
\end{remark}

\begin{remark}
In \cite{BreuilHellmannSchraen-LocalModel}, Breuil, Hellmann and Schraen have shown that Corollary \ref{corollary:final-corollary} is optimal in the sense that $X$ is {\em singular} at classical points whose critical types are not all products of distinct simple transpositions. (They had previously shown this in \cite[Section 5]{BHS-Classicality} assuming certain modularity conjectures.)
\end{remark}

\begin{remark}
The astute reader may have noticed that the classicality hypothesis in Theorem \ref{theorem:main-theorem-text} may be relaxed. Surely we used classicality to know that the local representations at the $p$-adic places were crystalline. But, we also used it in writing the sum in Theorem \ref{theorem:main-theorem-text} over the $p$-adic places in terms of the {\em critical type}. Even the basic idea of the critical type requires an {\em a priori} reasonable ordering of Hodge--Tate weights. This ordering arises in the classical setting as the corresponding dominant weight. 

But, Theorem \ref{theorem:main-theorem-text} holds also for (sufficiently generic) crystalline points (with globally irreducible Galois representations) on the eigenvarieties, provided we replace the sum $\sum_{v \dvd p} \sum_{\tau} \dotsb$ by a sum related to the Bloch--Kato Selmer dimensions as in Theorem \ref{theorem:HT-ref-bound}, i.e. a dimension depending on the parameter of the corresponding point on the eigenvariety. Doing this, it seems likely that the corresponding bounds will {\em not} always be tight. 

The easiest examples we have in mind are {\em companion points} on the Coleman--Mazur eigencurve. For concreteness, we may consider an overconvergent $p$-adic cuspform $g$ of negative weight $2-k$, such that $\theta^{k-1}(g)$ is the critical $p$-stabilization of a $p$-ordinary CM form of weight $k$. The analog of Theorem \ref{theorem:main-theorem-text} only produces an upper bound of two for the size of the tangent space to the one-dimensional eigencurve at the point corresponding to $g$. 

However, we learned from Bella\"iche (in a preprint which is no longer publicly available) that if one could prove that $g$ lies on a union of CM components then in fact $g$ lies on a unique component and is smooth. We can sketch our own proof as well. If $g$ lies on a union of CM components then the infinitesimal deformations of the Galois representation on the eigencurve would all be locally split at $p$. The locally split condition is certainly not implied by the weakly-refined condition and thus imposing it would bring the bound of two which Theorem \ref{theorem:main-theorem-text} gives down to a bound of one, which proves $g$ is a smooth point. It bears mentioning that we do not know whether or not such forms $g$ must {\em a priori} lie on a union of CM components, but it would be remarkable if that condition was {\em never} satisfied.
\end{remark}

\bibliography{../../../bibliography/master}
\bibliographystyle{abbrv}

\end{document}